\newtheorem{theorem}{Theorem}[section]
\newtheorem{lemma}[theorem]{Lemma}
\newtheorem{definition}[theorem]{Definition}
\newtheorem{corollary}[theorem]{Corollary}
\newtheorem{claim}[theorem]{Claim}
\newtheorem{remark}[theorem]{Remark}
\newtheorem{conjecture}[theorem]{Conjecture}
\newtheorem{proposition}[theorem]{Proposition}
\newtheorem{open}[theorem]{Open Question}
\newtheorem{example}[theorem]{Example}
\DeclareMathOperator{\diam}{diam}
\DeclareMathOperator{\var}{Var}
\DeclareMathOperator{\cov}{Cov}
\DeclareMathOperator{\modd}{mod}
\title{Probabilistic and Geometrical Applications to Graph Theory}
\author{Matthew P. Yancey \thanks{Institute for Defense Analyses / Center for Computing Sciences (IDA / CCS), mpyance@super.org}}
\begin{document}
\maketitle

\begin{abstract}
This paper consists of two halves.

In the first half of the paper, we consider real-valued functions $f$ whose domain is the vertex set of a graph $G$ and that are Lipschitz with respect to the graph distance.
By placing a uniform distribution on the vertex set, we treat $f$ as a random variable.
We investigate the link between the isoperimetric function of $G$ and the functions $f$ that have maximum variance or meet the bound established by the subgaussian inequality.
We present several results describing the extremal functions, and use those results to resolve: (A) a conjecture by Bobkov, Houdr\'e, and Tetali characterizing the extremal functions of the subgaussian inequality of the odd cycle, and (B) a conjecture by  Alon, Boppana, and Spencer on the relationship between maximum variance functions and the isoperimetric function of product graphs.

While establishing a discrete analogue of the curved Brunn-Minkowski inequality for the discrete hypercube, Ollivier and Villani suggested several avenues for research.
We resolve them in second half of the paper as follows.
\begin{itemize}
	\item They propose that a bound on $t$-midpoints can be obtained by repeated application of the bound on midpoints, if the original sets are convex.  We construct a specific example where this reasoning fails, and then prove our construction is general by characterizing the convex sets in the discrete hypercube.
	\item A second proposed technique to bound $t$-midpoints involves new results in concentration of measure.  We follow through on this proposal, with heavy use on results from the first half of the paper.
	\item We show that the curvature of the discrete hypercube is not positive or zero.
\end{itemize}

\end{abstract}

\section{Motivation}

This manuscript deals with graphs, and the attempts to apply alternative areas of mathematics to them.

The first half is motivated by concentration of measure.
There is a canonical method to construct a Martingle by iteratively selecting a random variable $X$ that is defined as a function that is Lipschitz over a graph.
The isoperimetric function of the graph has been linked to the extremal variables.
We will present results describing the extremal variables, which will allow us to refine our knowledge about this link.

The second half is motivated by geometry.
We recently investigated the relationship between negative curvature and congestion in transportation networks \cite{Y}.
We currently are interested in showing that networks exhibiting qualities associated with positively curved spaces will consequently have many routing options for transportation between locations.
In this paper, we use an abundance of midpoints as a proxy for an abundance of routing options.
Our previous work had the advantage of a discrete analogue of negative curvature (Gromov's $4$-points hyperbolicity) that is well-studied \cite{Bow,ABCFLMSS,G}, practical \cite{CCL,CDEHV,CDEHV2,CDEHVX}, and consequential \cite{ST_hyper,JL,J,ADM}.
Multiple discrete analogues of positive curvature have been proposed \cite{EM,O1,O2,B,BS,GRST}, and some consequences of those notions are known \cite{P,CP,EM,BCLL}.
We will present results about the discrete analogue of the curved Brunn-Minkowski inequality.

\subsection{Background: concentration of measure}
For graphs $G_1, \ldots, G_k$, the Cartesian product $G_1 \square \cdots \square G_k$ is the graph with vertex set $V_1 \times \cdots \times V_k$ such that the distance between vertices $v = (v_1, \ldots, v_k)$ and $u = (u_1, \ldots, u_k)$ is $\sum_i d_{G_i}(v_i, u_i)$.
The edges of $G_1 \square \cdots \square G_k$ are the vertices that are distance $1$ apart.
We denote the Cartesian product with $G=G_1=\cdots=G_k$ as $G^k$.

For a fixed graph $G$ and vertex set $S$, let $B_d(S) = \{u : d_G(u,S) \leq d\}$.
The \emph{isoperimetric function} is $i_{G,d} = \min_{|S| \geq |V(G)|/2} |B_d(S)|$.
A problem considered by several authors \cite{ABS,BHT,ST} is the isoperimetric function of product spaces.
That is, we generalize the isoperimetric function from $G$ to $G^n$ as $i_{G,d,n} = \min_{|S| \geq |V(G^n)|/2} |B_d(S)|$.

One method to analyze the isoperimetric function of product spaces is to study probability spaces defined as uniform probabilities over the vertex set of a graph $G$ equipped with functions $X:V(G) \rightarrow \mathbb{R}$ that are Lipschitz with respect to the standard graph distance in $G$.
By placing a uniform distribution on $V(G)$, we abuse notation and treat $X$ as a random variable.
We use the notation from \cite{BHT} that $c(G) = \max_X \sqrt{\var(X)}$, where $X$ is taken over Lipschitz functions on $V(G)$.
This notation is not consistent with \cite{ABS,ST}.
We will call a function $X$ \emph{variance-optimal} if $X$ is Lipschitz and $\var(X) = c^2(G)$.
Alon, Boppana, and Spencer \cite{ABS} proved that $V(G^n) - i_{G,d,n}$ decays exponentially as $d$ grows when $\sqrt{n} \ll d \ll n$ with a rate that relies on $c^2(G)$.
Let $m(X)$ be the median value of $X$, and note that if $X$ is variance-optimal, then so is $-X$ and $X+a$ for all $a \in \mathbb{R}$.

\begin{theorem}[\cite{ABS}] \label{variance optimal is mostly right}
Let $\sqrt{n} \ll d \ll n$.
We have that $i_{G,d,n}/|V(G^n)| > 1 - e^{-\frac{d^2}{2c^2(G)n}(1+o(1))}$.
Let $S_{r,X} = \{a = (a_1, a_2, \ldots, a_n) \in V(G^n) : X^n(a) = \sum_{i=1}^n X(a_i) \leq r\}$.
If $X$ is variance-optimal and $m(X^n) \leq r \leq \mathbb{E}(X) = 0$, then $|S_{r,X}| \geq \frac{1}{2}|V(G^n)|$ and $|B_d(S_{r,X})|/|V(G^n)| \leq 1 - e^{-\frac{d^2}{2c^2(G)n}(1+o(1))}$.
\end{theorem}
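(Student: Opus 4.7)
The statement splits into two halves: a uniform upper bound on $1-i_{G,d,n}/|V(G^n)|$, and a matching lower bound witnessed by the sets $S_{r,X}$ built from a variance-optimal $X$.

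For the upper half, I would apply a variance-sensitive subgaussian martingale inequality to the $1$-Lipschitz function $f(y)=d_{G^n}(y,S)$, where $S$ is any set with $|S|\ge |V(G^n)|/2$. Draw $X_1,\ldots,X_n$ i.i.d.\ uniform on $V(G)$ and let $Y_i=\mathbb{E}[f(X_1,\ldots,X_n)\mid X_1,\ldots,X_i]$ be the corresponding Doob martingale. The key observation is that, conditionally on $(X_1,\ldots,X_{i-1})$, the increment $Y_i-Y_{i-1}$ is of the form $h(X_i)-\mathbb{E}[h(X_i)]$ for some $1$-Lipschitz $h:V(G)\to\mathbb{R}$, so its conditional variance is at most $c^2(G)$. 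A Freedman/Bernstein-type martingale bound then yields $P(f-m(f)\ge d)\le\exp\bigl(-d^2/(2c^2(G)n)(1+o(1))\bigr)$ when $d\ll n$. The hypothesis $|S|\ge |V(G^n)|/2$ forces $m(f)=0$, and this rearranges to the claimed bound on $i_{G,d,n}$.

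For the construction, given a variance-optimal $X$ the function $X^n$ is $1$-Lipschitz on $G^n$ and is a sum of i.i.d.\ bounded centered variables with total variance exactly $c^2(G)n$. The Lipschitz property gives $B_d(S_{r,X})\subseteq\{a:X^n(a)\le r+d\}$, and $|S_{r,X}|\ge|V(G^n)|/2$ is immediate from $r\ge m(X^n)$. The remaining estimate becomes a lower bound on $P(X^n>r+d)$; using $r\le 0$ this reduces to showing $P(X^n>d)\ge\exp\bigl(-d^2/(2c^2(G)n)(1+o(1))\bigr)$, which I would establish by a Cram\'er-type moderate-deviation expansion for sums of bounded i.i.d.\ centered variables in the regime $\sqrt{n}\ll d\ll n$.

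The main obstacle, on both sides, is pinning down the constant $2c^2(G)$ in the exponent to within a $(1+o(1))$ factor. Naive Azuma/Hoeffding on the upper half uses only absolute differences and produces $\diam(G)^2$ in place of $c^2(G)$, so one really needs the variance-sensitive martingale inequality and then has to verify that the Bernstein correction of order $\diam(G)\cdot d$ is absorbed into $(1+o(1))$ because $d\ll n$. Symmetrically, for the construction a Berry--Esseen-level Gaussian approximation is too crude, since its polynomial-in-$1/\sqrt{n}$ error swamps the exponentially small target probability; the correct tool is a moderate-deviations estimate obtained by exponentially tilting $X^n$ and Taylor-expanding the log-moment-generating function to second order at $0$, where the variance $c^2(G)$ appears precisely as the leading coefficient.
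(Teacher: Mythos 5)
Note first that the paper contains no proof of this theorem: it is imported verbatim from Alon--Boppana--Spencer \cite{ABS}, so the comparison is with their argument rather than anything in this manuscript. Your sketch is essentially a correct reconstruction of that argument, with one cosmetic difference in the upper half. ABS (and the machinery this paper does reproduce, e.g.\ Theorem \ref{subgaussian tensors out}) work directly with the log-moment generating function: $L_{G^n}(t)=nL_G(t)$ and $L_G(t)=\tfrac{1}{2}c^2(G)t^2+O(t^3)$ as $t\to 0$, so a Chernoff bound optimized at $t\approx d/(c^2(G)n)\to 0$ (this is exactly where $d\ll n$ enters) gives the exponent $d^2/(2c^2(G)n)(1+o(1))$; your Doob-martingale/Freedman route is an equivalent variance-sensitive exponential-moment argument, and your key observation --- that each conditional increment is a centered $1$-Lipschitz function on $G$, hence has conditional variance at most $c^2(G)$ --- is precisely what makes the constant come out as $c^2(G)$ rather than $\diam(G)^2$. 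For the lower half, both you and ABS tilt the i.i.d.\ sum $X^n$ and invoke a moderate-deviations expansion, and your reductions ($B_d(S_{r,X})\subseteq S_{r+d,X}$ by the Lipschitz property, $r\le 0$, $|S_{r,X}|\ge\frac12|V(G^n)|$ from $r\ge m(X^n)$) are the right ones. Two points you should make explicit to close the sketch: (i) Freedman controls deviations from the mean, whereas $|S|\ge\frac12|V(G^n)|$ only forces the \emph{median} of $f=d(\cdot,S)$ to be $0$; the shift $|\mathbb{E}f-m(f)|\le\sqrt{\var(f)}\le c(G)\sqrt{n}$ is $o(d)$ only because $d\gg\sqrt{n}$, so that hypothesis is needed in the upper half too, not just in the moderate-deviations regime; (ii) in the lower bound you need $c^2(G)>0$ and uniform control of the cubic term in the tilted log-moment expansion, both routine for bounded variables but worth stating, since this is exactly what justifies replacing a Berry--Esseen estimate by the Cram\'er-type one. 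With those remarks your plan delivers the stated bounds.
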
 

They also conjectured a stronger relationship between $i_{G,d,n}$ and $c^2(G)$---that the extremal set is determined by variance-optimal functions.

\begin{conjecture}[\cite{ABS}, page 416]\label{is variance optimal correct}
Let $X$ be a variance-optimal function over $G$.
Is it true for $n$ sufficiently large and $d,r$ in appropriate ranges that $|B_d(S_r)| \leq |B_d(S')|$ for all $S' \subset V(G^n)$ with $|S_r| \leq |S'|$?
\end{conjecture}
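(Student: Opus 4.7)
The plan is to leverage the gap between Theorem~\ref{variance optimal is mostly right}'s asymptotic match and the exact extremality the conjecture demands. Theorem~\ref{variance optimal is mostly right} already certifies that $|B_d(S_{r,X})|$ is within an $o(1)$ factor in the exponent of the true isoperimetric minimum; the conjecture asks for genuine extremality, namely that $S_{r,X}$ is an \emph{exact} minimizer of $|B_d(\cdot)|$ among sets of its size. I would pursue two parallel tracks.

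Track one (affirmative): reduce to a compression argument. Using the structural characterization of variance-optimal $X$ that is promised by the first half of the paper, I would ask whether the level sets $\{v \in V(G) : X(v) \leq t\}$ are themselves extremal for the single-factor isoperimetric problem on $G$. When this holds---trivially for $G=K_2$, where $S_{r,X}$ is a Hamming ball and the conclusion follows from Harper's vertex-isoperimetric theorem---a coordinate-by-coordinate compression on $G^n$ should show $|B_d(S_{r,X})| \leq |B_d(S')|$ for all $S'$ of the same size. One must then check that compression monotonicity is preserved under Cartesian product, which follows from standard shifting machinery when the local moves are $|B_d|$-nonincreasing in every factor.

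Track two (refutation): test small cases, particularly $G = C_{2k+1}$, for which the first half of the paper resolves the Bobkov--Houdr\'e--Tetali conjecture by exhibiting an explicit variance-optimal function. That function is unlikely to have level sets matching the natural isoperimetric extrema on the torus $C_{2k+1}^n$ (e.g.\ ``slabs'' in a single coordinate), so $|B_d(S_{r,X})|$ may be strictly larger than $|B_d(S^*)|$ for some alternative $S^*$ of equal size. A direct counting comparison on a concrete small triple $(k,n,d)$ would then produce a counterexample.

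The main obstacle is that any counterexample must survive the $(1+o(1))$ slack in Theorem~\ref{variance optimal is mostly right}: the difference between $|B_d(S_{r,X})|$ and the hypothetical better $|B_d(S^*)|$ must be of the same exponential order as $e^{-d^2/(2c^2(G)n)}$, whereas standard asymptotic techniques often detect only polynomial-in-$n$ discrepancies. I anticipate that the resolution in the paper is a negative answer via a careful choice of graph (plausibly the odd cycle) together with an explicit comparison computation, possibly paired with a positive variant restricted to graphs for which variance-optimal functions and single-factor isoperimetric extremal sets genuinely align.
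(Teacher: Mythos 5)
Your instinct that the answer is negative is right, but your Track~Two candidate is demonstrably the wrong graph, and the obstacle you flag at the end does not actually arise. The conjecture is \emph{known to hold} for the discrete torus $C_m^n$ by Bollob\'as and Leader~\cite{BL2} (the paper cites this explicitly alongside Harper~\cite{H} for the hypercube and~\cite{BL1} for the lattice), so the odd cycle $C_{2k+1}$ cannot furnish a counterexample -- no matter how the variance-optimal function's level sets look, some permutation of them is a slab and the slab is extremal. The paper's actual counterexample (Theorem~\ref{variance optimal is not isoperimetric}) is a \emph{tree}: the even-length caterpillar on $\{u_1,\dots,u_{2k}\}\cup\{w_1,\dots,w_{2k}\}$ with $N(w_i)=\{u_i\}$. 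Using Theorem~\ref{go away from average} and the hair/unimodularity machinery, one pins down $X$ essentially uniquely ($X(u_i)=i$, $X(w_i)=i\mp1$ according to side), then exhibits a competitor ordering $Y$ (not itself Lipschitz, but with well-defined level sets of the same cardinalities) for which $|B_d(S_{r,Y})|<|B_d(S_{r,X})|$ strictly for all $d\geq1$ and $r\geq k+2$. A single vertex permutation $\psi$ witnesses the containment $\psi(B_d(S_{r,X}))\supsetneq B_d(S_{r,Y})$, and tensoring $\psi$ coordinatewise to $\psi^n$ on $G^n$ carries the strict inequality to every dimension.

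This also dissolves your stated obstacle. The conjecture demands \emph{exact} extremality, so refuting it requires only a single set $S'$ with $|S'|=|S_{r,X}|$ and $|B_d(S')|<|B_d(S_{r,X})|$; there is no need for the discrepancy to compete with the $e^{-d^2/(2c^2(G)n)}$ scale, and the $(1+o(1))$ slack in Theorem~\ref{variance optimal is mostly right} is irrelevant. The structural reason the caterpillar works -- and why the intuition behind the conjecture fails -- is the one the paper develops in Section~\ref{variance-optimal and isoperimetric section}: a variance-optimal $X$ must linearize the graph around $\mathbb{O}(X)$ (Theorems~\ref{defining the origin} and~\ref{go away from average} force every branch monotone ``up'' or ``down''), and this linear ranking is forced to interleave the spine vertices $u_i$ with the pendant vertices $w_i$ in a way that is provably not ball-optimal once you perturb the ordering near the median. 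Your Track~One reduction to per-factor compression is sound in spirit (it is exactly why the hypercube and torus cases hold), but it explains the positive cases rather than detecting the failure; to find the counterexample you need to look at graphs whose variance-optimal level sets are \emph{not} the single-factor isoperimetric extrema, and trees with hairs rather than cycles are where that mismatch first appears.
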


Our initial intuition was that the conjecture may be true.
A variable that maximizes variance will attempt to evenly spread values towards $\infty$ and $-\infty$ as possible.
Because $X$ is Lipschitz we have that $B_d(S_r) \subseteq S_{r + d}$.
The hope is that $V(G^n) - S_{r + d,X} = S_{-(r+d),-X}$ is relatively large.

If true, the conjecture would have significance. 
Let $X_1,X_2$ be variance-optimal over $G_1,G_2$ respectively.
Let $H = G_1 \square G_2$, and define variable $Y$ over $H$ as $Y(u_1, u_2) = X_1(u_1) + X_2(u_2)$.
Under these conditions, $Y$ is variance-optimal and $c^2(H) = c^2(G_1) + c^2(G_2)$.
If true, the conjecture would thus imply that the isoperimetric function of $G^n$ and the associated extremal vertex sets can be determined by the isoperimetric function of $G$ and the associated extremal vertex sets.

The conjecture has been proven true for the discrete hypercube by Harper \cite{H}, the Euclidean lattice by Bollob\'{a}s and Leader \cite{BL1}, and the discrete torus by Bollob\'{a}s and Leader \cite{BL2}.

Our initial set of results are a series of statements that describe the variance-optimal functions of a graph.
In Section \ref{variance-optimal and isoperimetric section} we characterize the set of variance-optimal functions for three families of graphs.
Each of those families consists of trees with long paths, and our most useful statement involves hairs.
A \emph{hair} of $G$ is a sequence of vertices $w_0, w_1, \ldots, w_k$ such that $N(w_i) = \{w_{i-1}, w_{i+1}\}$ for $1 \leq i \leq k-1$ and $N(w_k) = \{w_{k-1}\}$.

\textbf{ Lemma \ref{hairs go one way}, (\ref{unimodular hairs}), Remark \ref{hair matched by path}, and Remark \ref{variance too}} \textit{
Let $X$ be a variance-optimal function over $G$ with hair $w_0, w_1, \ldots, w_k$.
The sequence of values $X(w_0), X(w_1), \ldots, X(w_k)$ is unimodular.
Let $m = \min_i X(w_i)$ and $M = \max_i X(w_i)$.
If $G$ has vertices $u', u''$ such that $X(u') \leq m+1$, $X(u'') \geq M-1$, and $u',u'' \notin \{w_1, \ldots, w_k\}$, then the sequence $X(w_0), X(w_1), \ldots, X(w_k)$ is monotone. 
}

We compare this to the structure result of Alon, Boppana, and Spencer.

\begin{theorem}[\cite{ABS}] \label{defining the origin}
Let $X$ be a variance-optimal function over $G$, and define $\mathbb{O}(X) = \{v \in G: -0.5 < X(v) - \mathbb{E}(x) \leq 0.5\}$.
Let $\nu_X = X(u) - \mathbb{E}(X)$ for some $u \in \mathbb{O}(X)$, and let $C_1, \ldots, C_k$ be the connected components of $G - \mathbb{O}(X)$.
Under these conditions, there exists variables $(\delta_1, \ldots, \delta_k) \in \{-1,1\}^k$ such that for $u \in C_i$ we have that $X(u) - \nu_X = \delta_k d(u, \mathbb{O}(X))$.
\end{theorem}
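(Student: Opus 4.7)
I would prove this by extracting local optimality conditions from one-vertex perturbations, using convexity and extreme-point analysis to upgrade them to a global integer-shift structure, and finally reading off the sign per component and the distance identity. Normalizing to $\mathbb{E}(X) = 0$ by translation-invariance of variance, I would perturb a single vertex by setting $X'(u) = X(u) + \epsilon$ with $X' = X$ elsewhere; a direct expansion gives
\[
\var(X') - \var(X) = \tfrac{2\epsilon X(u)}{n} + \tfrac{(n-1)\epsilon^2}{n^2},
\]
while feasibility confines $\epsilon$ to the interval determined by the edge constraints at $u$. Variance-optimality forces this quantity to be non-positive for every feasible $\epsilon$, yielding: if $X(u) > 0$, some neighbor $v$ satisfies $X(v) = X(u)-1$; if $X(u) < 0$, some neighbor $w$ satisfies $X(w) = X(u)+1$; and if $X(u) = 0$, both tight constraints hold (the $\epsilon^2$ term forces this even when the linear term vanishes).

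Next, the set $\{X : X \text{ is Lipschitz on } G,\ \mathbb{E}(X) = 0\}$ is a bounded convex polytope and variance is strictly convex on the mean-zero hyperplane, so its maximum is attained at an extreme point. At such a point the tight edge-constraints must span the hyperplane, so they contain a connected spanning subgraph of $G$ on which consecutive values differ by exactly $\pm 1$; hence $X$ lies in a single coset $\nu_X + \mathbb{Z}$ with $\nu_X \in (-0.5, 0.5]$. A short case check shows that $\mathbb{O}(X)$ coincides with $\{v : X(v) = \nu_X\}$, which matches the statement's choice of $\nu_X$ as the common value on any vertex of $\mathbb{O}(X)$.

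For a component $C_i$ of $G - \mathbb{O}(X)$, the nonzero integer offsets $k(v) := X(v) - \nu_X$ cannot change sign along an internal path --- a $\pm 1$-Lipschitz integer sequence crossing from positive to negative must hit $0$, but that vertex would belong to $\mathbb{O}(X)$ --- so $\delta_i \in \{-1,+1\}$ is well-defined. The Lipschitz property gives $|k(u)| \leq d(u, \mathbb{O}(X))$; for the reverse, iterate the local condition by stepping from $u$ with $k(u) \geq 1$ to a neighbor of offset $k(u)-1$, staying inside $C_i$ until the offset drops to $1$ and one more step lands in $\mathbb{O}(X)$, producing a witnessing path of length exactly $k(u)$. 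Combining yields $X(u) - \nu_X = \delta_i\, d(u, \mathbb{O}(X))$, and the case $k(u) \leq -1$ is symmetric.

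The main obstacle is the global integer-shift conclusion in the second stage: the local perturbation conditions alone only say that each vertex has some tight neighbor and do not prevent $X$ from having values with two distinct fractional parts whose local tight-neighbor requirements happen to close up consistently. The convex-maximum / extreme-point argument is what rules this out, and the delicate point is verifying that the tight edges must span the mean-zero hyperplane --- forcing the spanning connected subgraph structure that yields the single coset $\nu_X + \mathbb{Z}$.
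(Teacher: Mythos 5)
Your argument is correct, and it is essentially the argument this paper leans on elsewhere: the paper itself states Theorem \ref{defining the origin} as a citation to \cite{ABS} and gives no proof, but your two main ingredients are exactly the tools it develops around it. Your one-vertex perturbation identity and the sign analysis of its linear term is the same computation as in Theorem \ref{go away from average} (there phrased via $\frac{d}{d\epsilon}\var(X_\epsilon)$), and your extreme-point/rank argument forcing the tight edges $|X(u)-X(v)|=1$ to form a connected spanning subgraph is precisely fact (\ref{spanning tree}) together with Remark \ref{variance too}, from which the single-coset structure and the per-component sign $\delta_i$ follow as you describe. The "delicate point" you flag is handled correctly: on the mean-zero hyperplane the active edge vectors $e_u-e_v$ have rank $n-c$ with $c$ the number of components of the tight subgraph, so an extreme point forces $c=1$. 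The only loose end worth a sentence in a written version is that $\nu_X$ is well defined and $\mathbb{O}(X)\neq\emptyset$: since all values lie in one coset with representative $\nu_X\in(-0.5,0.5]$ and $\mathbb{E}(X)=0$, connectivity plus the Lipschitz condition forces some vertex to attain the value $\nu_X$, so $\mathbb{O}(X)$ is exactly the (nonempty) level set $\{v: X(v)-\mathbb{E}(X)=\nu_X\}$; also note your observation about both constraints being tight when $X(u)=0$ is true but not needed for the statement.
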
 

The first two examples in Section \ref{variance-optimal and isoperimetric section} are an exploration of the assumption $m(x) \leq \mathbb{E}(X)$ in Theorem \ref{variance optimal is mostly right} that is absent from Conjecture \ref{is variance optimal correct}.
We determine a bound on the isoperimetric number of the third example, which leads to the following result.

\textbf{Theorem \ref{variance optimal is not isoperimetric}} \textit{
Conjecture \ref{is variance optimal correct} is not true.
}

We conjecture a simple characterization of variance-optimal functions over trees; we are proposing that $\mathbb{O}(X)$ is not the correct center of the tree conceptually.
If true, the statement would strengthen Lemma \ref{hairs go one way} significantly.

\begin{conjecture}
Let $X$ be a variance-optimal function over a tree $T$.
There exists a vertex $r$ such that for all vertices $u \in V(T)$, we have that $|X(u) - X(r)| = d(u,r)$.
Moreover, if $T$ is not a path, then we may choose $r$ such that $d(r) \geq 3$.
\end{conjecture}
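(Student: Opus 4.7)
The plan is to apply Theorem \ref{defining the origin} and reduce the conjecture to showing that $\mathbb{O}(X)$ is a single vertex. After translating so that $\mathbb{E}(X) = 0$, if $\mathbb{O}(X) = \{r_0\}$ then the theorem gives $X(u) - X(r_0) = \delta_i d(u, r_0)$ on each component $C_i$ of $T - r_0$, so $|X(u) - X(r_0)| = d(u, r_0)$ for all $u$. Thus the first conclusion reduces to proving $|\mathbb{O}(X)| = 1$.

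I would first establish a tight-edge lemma: for variance-optimal $X$ on a tree, every edge $uv$ satisfies $|X(u) - X(v)| = 1$. Suppose instead some edge $e = uv$ were slack; removing $e$ splits $T$ into $T_1 \ni u$ and $T_2 \ni v$, and the perturbation $X_\epsilon(w) = X(w) + \epsilon \cdot \mathbf{1}_{w \in T_2}$ remains Lipschitz for small $|\epsilon|$. A direct computation gives
\[
\var(X_\epsilon) - \var(X) = \frac{2\epsilon}{n}\sum_{w \in T_2}(X(w) - \mathbb{E}(X)) + \epsilon^2 \frac{|T_1|\,|T_2|}{n^2};
\]
the strictly positive quadratic term allows a choice of small $\epsilon \neq 0$ strictly increasing variance, contradicting optimality.

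Next I would show $|\mathbb{O}(X)| = 1$. Suppose instead $u_1, u_2 \in \mathbb{O}(X)$ are distinct with $d(u_1, u_2) = d$ minimal. Tight edges plus $|X(u_1) - X(u_2)| < 1$ force $X(u_1) = X(u_2) = c$ and $d$ even, and Theorem \ref{defining the origin} then makes the $u_1 u_2$-path values follow the symmetric pattern $X(w_i) = c + \delta \min(i, d - i)$ for some $\delta \in \{-1, +1\}$. I plan to construct a strictly variance-improving perturbation $X^*$ that flattens this bump into a monotone increment: for $d = 2$, flip $X(w_1)$ to $c - \delta$ and simultaneously shift the component $S_2$ of $T - \{w_1 u_2\}$ containing $u_2$ by $-2\delta$, turning the triple $(X(u_1), X(w_1), X(u_2))$ from $(c, c + \delta, c)$ into $(c, c - \delta, c - 2\delta)$; the case $d > 2$ follows by iteratively applying this at the path's midpoint. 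Verifying that $X^*$ is Lipschitz on the two modified edges and that the variance change is strictly positive --- using the explicit values of $X$ on the shifted set given by Theorem \ref{defining the origin} --- will be the main obstacle of the proof.

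Finally, once $\mathbb{O}(X) = \{r_0\}$, if $T$ is not a path and $d(r_0) \leq 2$, the constraint $X(r_0) - \mathbb{E}(X) \in (-0.5, 0.5]$ combined with $\mathbb{E}(X) = 0$ forces the sign pattern $(\delta_1, \delta_2)$ around $r_0$ to consist of opposite signs --- otherwise $|X(r_0)|$ would equal the average distance from $r_0$, which exceeds $0.5$ for any tree of size at least three. Thus $X$ is monotone through $r_0$, and I would walk from $r_0$ along the $+\delta$ direction until reaching the nearest branching vertex $r$. Using that $X$ is signed distance from $r_0$, the component of $T - r$ containing $r_0$ inherits sign $-$ from $r$'s perspective while each other component inherits sign $+$, so $|X(u) - X(r)| = d(u, r)$ for all $u$, with $d(r) \geq 3$.
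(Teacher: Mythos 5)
You should know at the outset that the paper does not prove this statement: it appears there as an open conjecture, offered without proof, and the surrounding discussion even warns that $\mathbb{O}(X)$ is ``not the correct center of the tree conceptually.'' So there is no argument of record to compare yours against, and your proposal has to stand on its own.

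It does not, and the problem is not merely the verification you defer. Your entire strategy reduces the conjecture to the claim that $|\mathbb{O}(X)|=1$ for every variance-optimal $X$ on a tree, and that claim is false. Take the balanced tripod with three legs of length $3$ (so $n=10$) and let $X$ be $0$ at the center, $1,2,3$ along two legs and $-1,-2,-3$ along the third. All edges are tight, and $X$ is variance-optimal: its value multiset $\{-3,-2,-1,0,1,1,2,2,3,3\}$ is a translate of that of the ``distance from a leaf of the third leg'' function, and a short check of the tight-edge functions permitted by Theorem \ref{defining the origin} shows that $\var(X)=96/25$ is the maximum. Here $\mathbb{E}(X)=3/5$, so $\mathbb{O}(X)=\{v:X(v)=1\}$ consists of the two distance-one vertices on the two increasing legs: $|\mathbb{O}(X)|=2$, while the conjecture holds for this $X$ with $r$ the center --- a vertex that does not even lie in $\mathbb{O}(X)$, since $X(r)-\mathbb{E}(X)=-3/5$. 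So the statement your third step tries to establish by perturbation is simply not true, and no perturbation argument can rescue it, because $X$ is optimal and admits no variance-increasing Lipschitz modification at all. Indeed this example exhibits exactly the failure you flagged: with $u_1,u_2$ the two origin vertices, $w_1$ the center, $c=1$, $\delta=-1$, the center has a third neighbor $z$ on the decreasing leg with $X(z)=c+2\delta=-1$, and after flipping $w_1$ to $c-\delta=2$ the edge $w_1z$ has difference $3$, so your $d=2$ move leaves the Lipschitz class.

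The salvageable parts are the tight-edge lemma (your perturbation computation there is correct) and, in outline, the final step passing from a degree-two root to a nearby branch vertex, modulo walking toward the side that actually contains a vertex of degree at least three. But any successful proof must locate the root $r$ by a mechanism other than showing $\mathbb{O}(X)$ is a single vertex --- which is precisely what the paper is signalling when it says $\mathbb{O}(X)$ is not the right notion of center.
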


The range $\sqrt{n} \ll d \ll n$ is necessary in Theorem \ref{variance optimal is mostly right}, as Alon, Boppana, and Spencer work with a different tool from probability when $d \approx \epsilon n$ for some constant $\epsilon$.
Specifically, they work with the \emph{subgaussian inequality}, which states that 
\begin{equation}\label{subgaussian ineq}
 \mathbb{E} e^{t(X - \mathbb{E}X)} \leq e^{\sigma^2t^2/2} .
\end{equation}
The \emph{subgaussian constant} for $X$ is a value for $\sigma_X^2$ such that (\ref{subgaussian ineq}) is true for all real $t$.
For vertex set $S$, we define Lipschitz function $X_S(u) = d_G(u,S)$.
In particular, they showed that $|B_d(S)|/|V(G)| \geq 1 - e^{-(d/\sigma - 1)^2/2}$ when $d \geq \sigma$.

For graph $G$, let $\sigma_G$ be the supremum of $\sigma_X$ for functions $X$ that are Lipschitz over $G$.
We call a function $X$ over graph $G$ \emph{optimal} if $\sigma_X = \sigma_G$.
When it is clear, we drop the subscript from $\sigma$.

Using the same construction as above, Alon, Boppana, and Spencer \cite{ABS} showed that $\sigma^2_{G_1 \square G_2} = \sigma^2_{G_1} + \sigma^2_{G_2}$.
Bobkov, Houdr\'e, and Tetali \cite{BHT} showed that if $G = K_{\ell}$ and $\ell$ is even, then $\sigma_G ^2 = 1/4$.
If $\ell$ is odd, then $\sigma_G^{-2} = 2\ell\log\left(\frac{\ell+1}{\ell-1}\right)$.
Exact values for $\sigma$ are also known for paths and cycles of even length \cite{ST}.
Bobkov, Houdr\'e, and Tetali conjectured a characterization of the optimal functions for odd cycles \cite{BHT}, which was repeated by Sammer and Tetali \cite{ST}.

\begin{conjecture}[\cite{BHT}] \label{spread of cycles}
If  $X$ is an optimal variable over the cycle $C_n$, then there exists an $x_0 \in V(C_n)$ such that $|X(v) - X(v_0)| = d(x_0,v)$ for all $v \in V(C_n)$.
\end{conjecture}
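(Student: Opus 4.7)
The plan is to prove the conjecture via a variational argument that combines local perturbations with the exponential tilting implicit in the subgaussian inequality.

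First I would normalize so that $\mathbb{E}X = 0$, set $n = 2k+1$, and encode $X$ by its arc differences $a_i := X(v_{i+1}) - X(v_i)$ (indices mod $n$), which satisfy $a_i \in [-1,1]$ and $\sum_i a_i = 0$. The subgaussian constant is $\sigma_X^2 = \sup_{t \neq 0} \frac{2 \log \mathbb{E} e^{tX}}{t^2}$, and this function of $t$ extends continuously to $\var(X)$ at $0$ and tends to $0$ as $|t| \to \infty$, so the supremum is attained at some $t^* \neq 0$. Replacing $X$ by $-X$ if necessary, I may assume $t^* > 0$.

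Next I would carry out a first-order analysis. Let $\mu_{t^*}$ denote the tilted probability measure with $\mu_{t^*}(v) \propto e^{t^* X(v)}$. For any $Y : V(C_n) \to \mathbb{R}$ with $\mathbb{E}Y = 0$ such that $X + \varepsilon Y$ is $1$-Lipschitz for small $\varepsilon > 0$, an envelope argument at $t = t^*$ yields $\left.\frac{d}{d\varepsilon}\right|_{\varepsilon=0^+} \sigma_{X+\varepsilon Y}^2 \geq \frac{2}{t^*} \mathbb{E}_{\mu_{t^*}}[Y]$. Optimality of $X$ therefore forces $\mathbb{E}_{\mu_{t^*}}[Y] \leq 0$ for every admissible $Y$; by testing against appropriate elementary perturbations, this in turn forces $a_i \in \{-1, 0, +1\}$ for every $i$.

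Finally I would use an exchange argument to pin down the cyclic pattern. If $(a_i)$ is not, up to rotation, of the block form $(\underbrace{+1, \ldots, +1}_{k}, 0, \underbrace{-1, \ldots, -1}_{k})$, then some cyclically adjacent pair of differences equals $(-1, +1)$, so that $X$ has a strict local minimum at some $w \in V(C_n)$. Swapping this pair to $(+1, -1)$ raises $X(w)$ by $2$ while preserving the Lipschitz property; a direct computation, using $t^* > 0$ and $e^{2t^*} > 1 + 2t^*$, shows that the modified function has strictly larger value of $\frac{2 \log \mathbb{E} e^{t^*(X-\mathbb{E}X)}}{(t^*)^2}$ at $t = t^*$, hence strictly larger subgaussian constant, contradicting optimality. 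Once the block form is established, taking $x_0$ to be the unique minimizer of $X$ yields $|X(v) - X(x_0)| = d(v, x_0)$ for every $v$.

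The main obstacle is the first-order step. Because $\sigma_X^2$ is a supremum over $t$ rather than a smooth function of $X$, the envelope argument requires either verifying that the maximizer $t^*$ is unique for an optimal $X$ or working with the full superdifferential. Moreover, the admissible set of perturbations $Y$ depends on the (a priori unknown) tight Lipschitz edges of $X$, so a bootstrap is needed: an initial sign analysis rules out certain values of $a_i$, which expands the space of admissible perturbations, and so on. The exchange step at the end is comparatively routine, but verifying that the shift in $\mathbb{E}X$ induced by raising a single vertex never offsets the gain at $t = t^*$ is what ultimately pins down the conjectured profile.
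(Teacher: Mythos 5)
Your overall strategy (first-order stationarity at a maximizing tilt $t^*$, then a local exchange) is genuinely different from the paper's, but as written it has three gaps, two of which are serious. First, the assertion that the supremum of $2L_X(t)/t^2$ is attained at some $t^*\neq 0$ is unjustified: for many functions the supremum is approached only as $t\to 0$, where it equals $\var(X)$, and then the tilted measure $\mu_{t^*}$ and the factor $2/t^*$ in your envelope bound are meaningless; you acknowledge this but the whole first-order step hinges on it. Second, even granting $t^*$, the first-order condition $\mathbb{E}_{\mu_{t^*}}[Y]\leq \mathbb{E}[Y]$ for admissible $Y$ does not force $a_i\in\{-1,0,+1\}$: testing $\pm\mathbf{1}_w$ at a vertex whose two incident differences are strictly interior only yields the equality $\mu_{t^*}(w)=1/n$, not integrality. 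What actually delivers integrality is a convexity (second-order) statement: the paper's fact (\ref{spanning tree}) that an extreme point of the Lipschitz polytope has a spanning tree of tight edges, which on $C_{2k+1}$ immediately gives $n-1$ differences equal to $\pm 1$ and hence the remaining one equal to $0$.

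The most serious problem is the exchange step, which is not "routine" and in fact can fail. Raising a strict local minimum $w$ by $2$ changes the objective at $t^*$ by
\[
\ln\Bigl(1+\tfrac{e^{t^*(X(w)-\mathbb{E}X)}\bigl(e^{2t^*}-1\bigr)}{n\,\mathbb{E}e^{t^*(X-\mathbb{E}X)}}\Bigr)-\tfrac{2t^*}{n},
\]
and when the local minimum sits far below the mean (which happens for non-block patterns such as two deep valleys), the factor $e^{t^*(X(w)-\mathbb{E}X)}$ is exponentially small, so the gain is dwarfed by the mean-shift penalty and $L$ strictly decreases at $t^*$; no contradiction with optimality follows. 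The paper's proof of Theorem \ref{odd cycle optimal} avoids exactly this sign problem: instead of arguing that one explicit perturbation improves the objective, it sorts $X$ by a permutation (legitimate by (\ref{permuted values})), and at a repeated adjacent value writes the configuration as $\frac12(X_*\circ\pi+X_{**})$ for two Lipschitz functions, so that strict convexity of the log-moment functional together with its permutation invariance (Lemma \ref{not convex combo}) guarantees that \emph{one} of the two modifications beats $X$ for every $t$, without ever determining which. If you want to salvage your route, you would need to replace the single-vertex raise by such a symmetrization (or by an argument valid simultaneously for all $t$), and replace the first-order step by the extreme-point argument; also note that the block pattern with the $0$ difference adjacent to the minimum is a reflection, not a rotation, of your stated form, so your final case analysis should take $x_0$ to be the unique extremum (max or min) rather than always the minimizer.
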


We present several structural statements about optimal variables, such as the analogue of Lemma \ref{hairs go one way}.
Theorem \ref{defining the origin} does not hold for optimal variables, but it does hold for ``half'' of the graph.

\textbf{Corollary \ref{subgaussian origin}} \textit{
If $X$ is an optimal function and $X(u) < \mathbb{E}(X)$, then $\nu_X-X(u) = d(u, \mathbb{O}(X))$.
}

Once again we are able to use our statements describing optimal functions to characterize the optimal functions of specific examples.

\textbf{Theorem \ref{odd cycle optimal}} \textit{
Conjecture \ref{spread of cycles} is true.
}

All of our discussion so far generalizes in the obvious way to metric spaces with finite number of elements.
An example is the symmetric group $S_n$ on $n$ elements equipped with the Hamming distance, which is $d(\sigma, \sigma') = | \{i: \sigma(i) \neq \sigma'(i)\} |$.
Bobkov, Houdr\'e, and Tetali \cite{BHT} bounded $\sigma_{S_n}$. 

\begin{theorem}[\cite{BHT}]\label{subgaussian permutations}
Let $S_n$ be the symmetric group on $n$ elements equipped with the Hamming distance $d_H$.
The subgaussian constant for this space satisfies $\frac{1}{16}(n-1) \leq \sigma_{S_n}^2 \leq n-1$.
\end{theorem}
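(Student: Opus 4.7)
The plan is to prove the two bounds independently using standard concentration-of-measure tools. For the upper bound $\sigma_{S_n}^2 \leq n-1$, I would fix a $1$-Lipschitz function $X$ on $S_n$ and build a Doob martingale by revealing the values $\sigma(1), \sigma(2), \ldots, \sigma(n)$ of a uniformly random permutation one coordinate at a time. Setting $M_i = \mathbb{E}[X(\sigma) \mid \sigma(1), \ldots, \sigma(i)]$, one has $M_0 = \mathbb{E}X$, $M_n = X$, and $M_n = M_{n-1}$ because $\sigma(n)$ is forced once the first $n-1$ coordinates are known.

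The key step is to bound each martingale increment. Let $R$ be the set of values not yet placed before step $i$, and write $g(a) = \mathbb{E}[X \mid \sigma(1),\ldots,\sigma(i-1),\sigma(i)=a]$ for $a \in R$. For any $a,b \in R$, the natural ``swap'' bijection --- given a completion $\tau$ with $\tau(i)=a$, locate the unique $j > i$ with $\tau(j)=b$ and send $\tau$ to the permutation obtained by exchanging the values $a$ and $b$ at positions $i$ and $j$ --- produces a partner $\tau'$ with $\tau'(i) = b$ and $d_H(\tau, \tau') = 2$. The Lipschitz hypothesis and averaging over $\tau$ then give $|g(a) - g(b)| \leq 2$, so conditional on $\mathcal{F}_{i-1}$ the increment $M_i - M_{i-1}$ has mean zero and lies in an interval of length at most $2$. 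Hoeffding's lemma yields $\mathbb{E}[e^{t(M_i - M_{i-1})} \mid \mathcal{F}_{i-1}] \leq e^{t^2/2}$, and multiplying over the $n-1$ active steps produces exactly (\ref{subgaussian ineq}) with $\sigma^2 = n-1$.

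For the lower bound $\sigma_{S_n}^2 \geq (n-1)/16$, I would pick subsets $A, B \subseteq [n]$ with $|A| = |B| = \lfloor n/2 \rfloor$ and take $X(\sigma) = |\{i \in B : \sigma(i) \in A\}|$. Each coordinate contributes at most $1$ to $|X(\sigma) - X(\sigma')|$, so $X$ is $1$-Lipschitz with respect to $d_H$. Under the uniform distribution on $S_n$ it has a hypergeometric law, and a short calculation separating $n$ even and $n$ odd gives $\var(X) \geq (n-1)/16$. Expanding (\ref{subgaussian ineq}) to second order in $t$ forces $\sigma_X^2 \geq \var(X)$, and the lower bound follows.

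The main obstacle is the constant in the upper bound: a careless application of Azuma's inequality using only $|M_i - M_{i-1}| \leq 2$ would lose a factor of four and give $\sigma^2 \leq 4(n-1)$. The saving point, which must be made explicit, is that the increment lies in a \emph{conditionally fixed} interval of length $2$ once $\mathcal{F}_{i-1}$ is known; this is exactly the hypothesis of Hoeffding's lemma and shrinks the per-step contribution from $2$ to $1$, recovering the claimed constant.
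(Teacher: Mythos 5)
Your proposal is correct and takes essentially the same route as the paper: the paper's induction on $n$ (conditioning on $\sigma(1)$, using the same transposition bijection to show the conditional means differ by at most $2$, and bounding the per-step moment generating function by $e^{t^2/2}$ via $4\sigma_{K_n}^2\leq 1$, which is exactly your Hoeffding step) is your Doob martingale argument unrolled recursively. Your lower bound is likewise the standard one the paper cites from Bobkov, Houdr\'e, and Tetali---a balanced indicator-sum function with variance at least $(n-1)/16$ combined with $\var(X)\leq\sigma_X^2$ (Claim \ref{var less than subgauss}).
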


We are interested in Olivier and Villani's \cite{OV} use of concentration inequalities.
Their concentration inequalities are applied to functions whose domain is a \emph{subset} of the hypercube $\{0,1\}^n$ with $\ell_1$ distance, and thus standard statements about the hypercube will not apply.
Fortunately, these subsets are closed under the group action by the symmetric group.
Using this property, concentration inequalities for this unusual domain can be established (with small, but non-trivial work) from concentration inequalities on functions of the symmetric group equipped with Hamming distance.

Thus, we are interested in the bounds in Theorem \ref{subgaussian permutations}.
In Remark \ref{strict inequality} we show that the upper bound is not sharp, although our improvement is negligible.
In Theorem \ref{at least n/4 for S_n} we show that $\sigma_{S_n}^2 > n/4$.

Let $C_r = \{A \in \{0,1\}^n : |A| = r\}$ and $C_{r_1, r_2, \ldots, r_k} = \cup_{i} C_{r_i}$, equipped with a distance metric equal to the order of the symmetric difference.
Ollivier and Villani \cite{OV} established that $\sigma_{C_{n/2}}^2 \leq n-1$ and $\sigma_{C_{(n-1)/2, (n+1)/2}}^2 \leq n$, which we wish to generalize.

\textbf{ Theorem \ref{different level sets} and Theorem \ref{Linearly far from center} } \textit{
For $n \geq 3$, we have that 
$\sigma_{C_{(n-r)/2,(n+r)/2}}^2 < n - 1 + r^2/4$.
Additionally suppose $c \geq 2$, $\frac{c-1}{2(c+1)}n > R > \sqrt{n \ln(\frac{c}{c-1})}$, and $|n/2 - r_i| \leq R$ for all $i$.
Let $X_*$ be a Lipschitz function over $\{0,1\}^n$, and let $X$ be $X_*$ induced on $C_{r_1, \ldots, r_k}$.
$$ \mathbb{P}\left(X - \mathbb{E}(X_*) \geq h \geq 0\right) < e^{(R+3)\ln(c) - \ln(k) -\frac{h^2}{2(n-1)}}. $$
}

\subsection{Background: graph curvature}
For sets $S,T \subset \mathbb{R}^d$ and $c \in \mathbb{R}$, let $S + T = \{s+t: s \in S, t \in R\}$ and $c S = \{cs : s\in S\}$.
Understanding the extremal properties of $S+T$ is a central goal in algebraic combinatorics.
For example, Roth's theorem states that when $A \subset \mathbb{Z}$ and $A$ is disjoint from $(A+A)/2$, then $A$ has density $0$.
Let $m(S, T) = (S + T)/2$.

One method to bound the size of $S + T$ comes from the Brunn-Minkowski inequality, which states that 
$ V(S + T)^{1/d} \geq V(S)^{1/d} + V(T)^{1/d}, $
where $V(S)$ denotes the volume of measurable set $S$.
Using that $V(c S) = c^d V(S)$ and that $(a + b)/2 \geq \sqrt{ab}$ for positive $a,b$, the Brunn-Minkowski inequality transforms into 
$ V(m(S, T)) \geq \sqrt{V(S) V(T)}.$
In other words, the volume of the midpoints between $S$ and $T$ is at least the geometric average of $V(S)$ and $V(T)$.
This statement can be generalized to weighted geometric averages: if $0 < \rho < 1$, define $m_\rho(S,T) = \rho S + (1 - \rho)T$, and we have that (see \cite{B})
$ V(m_\rho(S, T)) \geq V(S)^{\rho} V(T)^{1- \rho} . $

We define the distance between sets $S,T$ to be $d_*(S,T) = \min\{d(s,t):s \in S, t \in T\}$.
For $\mathbb{R}^n$, the value $d_*(S,T)$ has no affect on midpoints, as $S + (\{u\} + T) = \{u\} + (S + T)$.
However, for smooth complete manifolds with positive curvature $K$, the Brunn-Minkowski inequality strengthens (see \cite{OV}) exponentially as $d_*(S,T)$ grows:
$ V(m(S,T)) \geq \sqrt{V(S) V(T)} e^{\frac{K}{8}d_*(S,T)^2}.$
Should a value $K$ hold for some space, we will call the supremum of such values the \emph{Brunn-Minkowski curvature}.

There have been several attempts to generalize the Brunn-Minkowski inequality to discrete spaces such as $\mathbb{Z}^d$.
These efforts have met several obstacles.
The most obvious obstacle is that the volume function naturally voids sets $S$ whose dimension is less than the overall space.
The discrete analogue of volume is to count points, but this function does not naturally void sets with smaller dimensionality.
As such, the strongest statement possible is that $|S+T| \geq |S| + |T|$, and this is sharp for any dimension.

Some progress can be made when we force the sets $S$ and $T$ to live in higher dimensions.
Ruzsa \cite{R} proved that for $S,T \subset \mathbb{Z}^d$ with $\dim(S+T)=d$ and $|S| \geq |T|$, then $|S+T| \geq |S| + d|T| - {d+1 \choose 2}$.
Gardner and Gronchi \cite{GG} proved that for $S,T \subset \mathbb{Z}^d$ with $dim(T)=d$ and $|S| \geq |T|$, then $|S+T|^{1/d} \geq |S|^{1/d} + \left(\frac{|T|-d}{d!}\right)^{1/d}$.

We consider the question posed verbally by Stroock and written down by Ollivier and Villani (see \cite{OV}), ``what is the curvature of the discrete hypercube?''
We consider the more general question of what graphs display the qualities of a curved space.
We limit ourselves here to discrete analogues of positive curvature; see \cite{Y} for a discussion on discrete analogues of negative curvature.
Proposed notions of positive curvature for graphs include coarse Ricci curvature \cite{O1,O2}, Bakry-Emery version of Ricci curvature \cite{BCLL}, dispersion of heat \cite{EM}, displacement convexity with approximate midpoints \cite{B,BS}, displacement convexity with Gaussian midpoints \cite{GRST}, among others.
(We briefly mention that ``displacement convexity with approximate midpoints'' is a misleading name, as it allows for midpoints of quasi-geodesics.)
We will focus on Brunn-Minkowski curvature and displacement convexity as proposed in \cite{OV}.

The \emph{symmetric discrete midpoints} are 
$$\widehat{m}_\rho(a,b) = \{u : d(a,u) = \lfloor \rho d(a,b) \rfloor, d(a,u) + d(u,b) = d(a,b)\} \cup $$
$$\cup \{u : d(a,u) = \lceil (1-\rho) d(a,b) \rceil, d(a,u) + d(u,b) = d(a,b)\},$$
where $\widehat{m}(a,b) = \widehat{m}_{1/2}(a,b)$, and the discrete analogue of Brunn-Minkowski curvature $K$ is that 
$$ \left|\widehat{m}(S,T)\right| \geq \sqrt{|S| |T|} e^{\frac{K}{8}d_*(S,T)^2}.$$
Roughly speaking, displacement convexity implies that the midpoints of an optimal transportation from $A$ to $B$ should satisfy a similar inequality.
There are several definitions of displacement convexity, and we will wait until Section \ref{disp conv sec} to give a formal definition.

The coarse Ricci curvature of the discrete hypercube in $d$ dimensions is easy to calculate as $\frac{1}{2d}$, and Ollivier and Villani \cite{OV} demonstrated that the $d$-dimensional hypercube has Brunn-Minkowski curvature at least $\frac{1}{2d}$.
But it is a coincidence that these are the same number; in Proposition \ref{hypercube can do better} we show that there exists a $\delta > 0$ such that the $d$-dimensional hypercube has Brunn-Minkowski curvature at least $\frac{1}{2d}(1 + \delta)$ when $d$ is large enough.

The discrete hypercube is unique in that it is a product space where the $\ell_p$ topologies are equivalent for all $p \in [0, \infty)$.
With the following result, we show that the Brunn-Minkowski curvature found by Ollivier and Villani is due to the $\ell_0$ topology; the difficulties of establishing a Brunn-Minkowski inequality for the integer lattice gives insight into why the other product measures are insufficient.
We omit the proof to Theorem \ref{quick l0 product spaces result} as the modifications necessary to the proof of Ollivier and Villani's result are contained in the proof to Theorem \ref{l0 metric}, which we will state soon.

\begin{theorem}\label{quick l0 product spaces result}
Any finite product space with dimension $d$ equipped with the $\ell_0$ metric has Brunn-Minkowski curvature at least $\frac{1}{2d}$.
\end{theorem}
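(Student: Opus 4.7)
The plan is to adapt Ollivier and Villani's proof of the hypercube case to arbitrary finite product spaces under the $\ell_0$ metric. The key structural observation is that the $\ell_0$ metric makes geodesic midpoints purely coordinate-by-coordinate objects: for any $a, b$ in $X = X_1 \times \cdots \times X_d$ with $d(a,b) = r$, a point $u \in \widehat{m}(a,b)$ must satisfy $u_i = a_i = b_i$ whenever $a_i = b_i$ (otherwise the $i$-th coordinate contributes $2$ to $d(a,u)+d(u,b)$ but $0$ to $d(a,b)$) and $u_i \in \{a_i, b_i\}$ whenever $a_i \neq b_i$, with $\lfloor r/2 \rfloor$ or $\lceil r/2 \rceil$ of the disagreement coordinates taking the $a$-value. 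In particular, $|\widehat{m}(a,b)| = \binom{r}{\lfloor r/2 \rfloor}$, exactly as in $\{0,1\}^d$, so the sizes of the individual alphabets $X_i$ are irrelevant to the local midpoint combinatorics.

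With this equivalence established, I would carry out the Ollivier--Villani inductive argument on $d$ essentially verbatim. The base case $d = 1$ is immediate: $d_*(S,T) \leq 1$, and every element of $S$ (resp.\ $T$) lies in $\widehat{m}(a,b)$ for some $a\in S, b\in T$ with $a \neq b$, so that $|\widehat{m}(S,T)| \geq |S \cup T| \geq 2\sqrt{|S||T|}$ when the sets are disjoint, which exceeds $\sqrt{|S||T|}\,e^{1/16}$. The induction step proceeds by slicing along the final coordinate: for each $x \in X_d$ set $S_x = \{a' \in X_1 \times \cdots \times X_{d-1} : (a',x) \in S\}$ and similarly $T_x$, and combine the $(d-1)$-dimensional bound on each pair $(S_x, T_y)$ via a Prekopa--Leindler-type inequality. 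The combinatorial observation above guarantees that midpoints with $d$-th coordinate in $X_d \setminus \{x, y\}$ simply do not arise, so distinct unordered pairs $\{x,y\}$ contribute to disjoint slices of $\widehat{m}(S,T)$.

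The main obstacle is the bookkeeping in the slicing step when $|X_d| > 2$: one must verify that the inner Prekopa--Leindler inequality used by Ollivier and Villani only invokes the coordinate-by-coordinate midpoint structure above, and never uses the binary nature of the hypercube coordinates in a more essential way. Once that is checked, the extra room in $|X_d|$ is never needed and never harmful, and the same constant $\frac{1}{2d}$ falls out. Since this is precisely the modification carried out in full detail in the proof of Theorem~\ref{l0 metric}, I would defer the mechanical verification to that argument and simply cite it once stated.
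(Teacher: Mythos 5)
Your opening observation is exactly the right one: in the $\ell_0$ metric every coordinate contributes independently and additively, so a point $u$ lies on a geodesic from $a$ to $b$ iff $u_i=a_i=b_i$ on agreement coordinates and $u_i\in\{a_i,b_i\}$ on disagreement coordinates, and the midpoint count depends only on $r=d(a,b)$, not on the alphabet sizes (modulo a factor-of-two slip for odd $r$, where $|\widehat m(a,b)|=2\binom{r}{\lfloor r/2\rfloor}$). That is precisely the modification the paper has in mind, and it is the only place the hypercube's binary structure could conceivably have entered.

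Where the proposal breaks down is the inductive slicing step. First, the claim that ``distinct unordered pairs $\{x,y\}$ contribute to disjoint slices of $\widehat m(S,T)$'' is false once $|X_d|>2$: the pairs $\{x,y\}$ and $\{x,z\}$ both produce midpoints whose $d$-th coordinate equals $x$, so the slices overlap and the accounting you rely on does not go through. Second, and more fundamentally, Ollivier and Villani's proof is not an induction on $d$ combined with a Pr\'{e}kopa--Leindler step; it is a single double-counting argument. One fixes $r$, builds the explicit injection-style map $\phi$ from $(S,T)_r\times C^{(r)}_{\lfloor r/2\rfloor,\lceil r/2\rceil}$ into ordered pairs of midpoints at mutual distance $r$, and then bounds the fiber sizes by concentration of measure on $C^{(r)}_{\cdot}$. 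This is exactly the skeleton written out in the proof of Theorem~\ref{l0 metric}, and it invokes only the coordinate-wise midpoint structure you identified, so it transfers verbatim to $K_{r_1}\square\cdots\square K_{r_d}$. So the correct conclusion to draw from your observation is not to slice and induct but simply to rerun OV's map-and-count argument unchanged; your final sentence deferring to Theorem~\ref{l0 metric} points in that direction, but it does not rescue the slicing step, since that proof contains no such step to be verified.
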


Ollivier and Villani's \cite{OV} work stated many open questions.
The largest of which is to understand the curvature of the discrete hypercube with a discrete analogue of displacement convexity.
They also desire a connection between the different discrete analogues of curvature; explicitly stating that ``the relationship between coarse Ricci curvature and displacement convexity of entropy is unclear, and no implication has been proved or disproved in either direction as far as we know.''

The discrete hypercube has been singled out because authorities on the subject \cite{G2,OV} have labeled it as the best candidate for being the first proven example of a positively curved discrete space.
The discrete hypercube was proven by Erbar and Maas \cite{EM} to have positive heat-dispersion-curvature.
Our first result is a rejection of the connection between coarse Ricci curvature and displacement convexity, and we do so on the discrete hypercube, which additionally implies a disagreement between displacement convexity and heat dispersion.

\textbf{Example \ref{negative curvature}} \textit{
There exists sets in the discrete hypercube such that the entropy of the midpoints is less than the entropy of the endpoints.
}

Example \ref{negative curvature} is specifically a reference to standard displacement convexity, which we distinguish from \emph{weak} displacement convexity.
Weak displacement convexity seems quite general, and it seems like it should hold for most spaces that have Brunn-Minkowski curvature (we note that half of our lemmas towards Theorem \ref{almost curved} are not specific to the hypercube).
Such a result would imply positive curvature for more than just the discrete hypercube, as Neeranartvong, Novak, and Sothanaphan \cite{NNS} proved that Brunn-Minkowski curvature also exists in the symmetric group.
While such a result eludes us, we are able to present the following progress towards proving weak displacement convexity of the hypercube.

\textbf{Theorem \ref{almost curved}} \textit{
For probability distributions $\mu_A$, $\mu_B$ over points in the $d$-dimensional discrete hypercube, there exists an optimal transportation $\tau$ such that the distribution $\mu_c$ over the midpoints $\widetilde{m}(\mu_A, \mu_B)$ satisfies 
$$  S(\mu_C) \geq \frac13 S(\mu_A) + \frac13 S(\mu_B) + \frac{2}{5d^3} (W^2(\mu_A, \mu_B))^2 - \frac{2}{3}.$$
}

The other open questions from Ollivier and Villani involve bounding $\widehat{m}_\rho(A,B)$ for $\rho \neq 1/2$.
The first approach suggested is to bound $|\widehat{m}_{1/4}(A,B)|$ by bounding $|\widehat{m}(\widehat{m}(A,B),B)|$.
Such a relationship would only exist in Riemannian spaces for convex $A$ and $B$, and so Ollivier and Villani suggest the condition $\widehat{m}(A,A) \subseteq A$, $\widehat{m}(B,B) \subseteq B$ as the discrete analogue of convexity.
This condition is not sufficient.
To fully explore this line of thinking, we characterize the family of convex sets in the discrete hypercube.

\textbf{Theorem \ref{convex closure is smaller hypercube}} \textit{
If $S$ is a convex subset of $\mathbb{H}_d$, then $S$ is isomorphic to $\mathbb{H}_{d'}$ for $d' \leq d$.
}

Our characterization proves that Example \ref{two convex sets}, which has vertex sets $A,B$ such that $\widehat{m}(\widehat{m}(A,B),B) \supsetneq \widehat{m}_{1/4}(A,B)$, is general.
This characterization also leads to interesting results that highlight how non-intuitive discrete spaces are.

\textbf{Corollary \ref{closure of a ball}} \textit{
The convex closure of any non-trivial ball in a hypercube is the whole space.
}

\textbf{Corollary \ref{closure of subsets}} \textit{
If $A,B$ are nonempty sets of vertices in the hypercube and $C$ is the convex closure of $\widehat{m}(A,B)$, then $A \cup B \subseteq C$.
}

We are able to use our advances in concentration theory to bound $\widehat{m}_\rho(A,B)$ when $|1/2-\rho|$ is small.
Our bound still uses $\sqrt{|S||T|}$ instead of $|S|^\rho|T|^{1-\rho}$, which implies that our technique is not preferable.

\textbf{Theorem \ref{l0 metric}} \textit{
Suppose $S,T \subseteq G = K_{r_1} \square K_{r_2} \square \cdots \square K_{r_d}$ such that $d_*(S,T) \geq \delta d$.
Let $\rho$ be such that $|1/2-\rho| < \epsilon$ and $C = \delta^2 - 16\ln(2)\epsilon > 0$.
Under these conditions, for large $d$ we have that 
$$|\widehat{m}_{\rho}(S,T)| > \sqrt{|S||T|} e^{C d(1/2+o(1))}.$$
}

Bounding $|\widehat{m}(\widehat{m}(A,B),B)|$ is useful for a strong version of displacement convexity.
Graphs that satisfy such a property also satisfy a condition that is similar, but stronger, than being claw-free.
Unfortunately, we characterize this family of graphs in Theorem \ref{char strong curve}, and it is a very small family of spaces.

\subsection{Common theme}
The obvious connection between the two halves of this manuscript is that the second half uses a technical result from the first half.
However, there is a stronger intuitive link.
While most of the space of the second half of the paper is spent discussing ``what does it mean to be curved?'' the intent is to discuss ``who is curved?''
From that perspective, we propose graphs with small variance---the graphs we study in the first half of the paper---as the model for a positively curved space.

Let us quickly survey the evidence for this.
Lipschitz functions frequently appear as minor statements in manuscripts about curved spaces.
For example, in Particular Case 5.4 of \cite{V}, Villani presents the dual Kantorovich problem (which we elaborate on in Remark \ref{constant valued functions}) as maximizing a variance-like parameter over the set of Lipschitz functions; and Lipschitz functions show up elsewhere in the textbook, such as in Remark 6.5.
The heat dissipation approach to curvature from Erbar and Maas \cite{EM} implies a bound on the subgaussian constant.

The spread can be defined $c^2(G) = \frac{1}{|V(G)|}\max_f \sum_{u \in V(G)} f(u)^2$ subject to the constraints $0 = \sum_{u \in V(G)} f(u)$ and for each $vw \in E(G)$ we have that $(f(v) - f(w))^2 \leq 1$.
The Fiedler vector $h$ of the Laplacian is known to satisfy $\frac{|E(G)|}{\lambda} = \max_h \sum_{u \in V(G)} h(u)^2$ subject to the constraints $0 = \sum_{u \in V(G)} h(u)$ and $\sum_{vw \in E(G)}(h(v) - h(w))^2 \leq |E(G)|$, where $\lambda$ is the associated eigenvalue.
Alon and Milman \cite{AM} were the first to observe that concentrated graphs are a generalization of expanders.
Bauer, Chung, Lin, and Liu \cite{BCLL} showed that each of coarse Ricci curvature and Bakry-Emery version of Ricci curvature implies a bounded value for $\lambda$.

Thus the literature is heavy with results that indicate discrete curvature is a tool that can be used to find concentrated discrete spaces.
But we know far more concentrated discrete spaces than curved discrete spaces; so can we use techniques from expanders to study curvature?
We explore the prospects of such research in Section \ref{and now to my field}.
In continuous spaces, the statements also use curvature to imply concentration and not vice-versa (see chapter 22 of \cite{V}), but the examples contained in this manuscript suggest that any applicable discrete curvature will necessarily be weaker than the continuous analogue.


\section{Concentration Of Lipschitz Functions}

\subsection{Background}
The following is basically the details behind the relevant portions of the first page and a half of \cite{BHT}.
We assume all variables are real.

Recall (\ref{subgaussian ineq}).
Because $X$ is real, the exponential function is positive and monotone increasing, and therefore $e^{th}\mathbb{P}(X - \mathbb{E}X \geq h) \leq \mathbb{E} e^{t(X - \mathbb{E}X)}$ when $t$ is nonnegative.
By setting $t = h\sigma^{-2} \geq 0$ we have that (\ref{subgaussian ineq}) implies
\begin{equation} \label{tail distribution}
\mathbb{P}(X - \mathbb{E}X \geq h) \leq e^{-h^2/(2\sigma^2)}, h \geq 0 .
\end{equation}

Let $m(X)$ be the median value of $X$, and so $y = m(X)$ minimizes the function $\mathbb{E}|X - y|$.
In particular, we have that $\mathbb{E}|X - m(X)| \leq \mathbb{E}|X - \mathbb{E}(X)|$.
We apply the Cauchy-Schwartz inequality (let $Y_e$ be the possible events for variable $|X - \mathbb{E}(X)|$ with probability $p_e$, set $a_e = Y_e \sqrt{p_e}$ and $b_e = \sqrt{p_e}$) to say that $\left(\mathbb{E}|X - \mathbb{E}(X)|\right)^2 \leq \var(X)$.
We conclude that 
\begin{equation} \label{median close to average}
 \left|\mathbb{E}(X) - m(X)\right| \leq \mathbb{E}\left|X - m(X)\right| \leq \mathbb{E}\left|X - \mathbb{E}(X)\right| \leq \sqrt{\var(X)}.
\end{equation}
We may apply (\ref{tail distribution}) with $h = h' - \sqrt{\var(X)}$ and use (\ref{median close to average}) to see that 
\begin{equation} \label{skewed tail distribution prior}
\mathbb{P}(X - m(X) \geq h') \leq e^{-\left(h' - \sqrt{\var(X)}\right)^2/(2\sigma^2)},  h' \geq \sqrt{\var(X)}.
\end{equation}
Claim \ref{var less than subgauss} will transform (\ref{skewed tail distribution prior}) into
\begin{equation} \label{skewed tail distribution}
\mathbb{P}(X - m(X) \geq h') \leq e^{-(h'/\sigma - 1)^2/2},  h' \geq \sigma.
\end{equation}

\begin{claim} \label{var less than subgauss}
$\var(X) \leq \sigma^2$
\end{claim}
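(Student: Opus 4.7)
The plan is to compare the Taylor expansions of both sides of the subgaussian inequality around $t=0$ and read off the second-order coefficients.

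First I would rewrite the subgaussian inequality (\ref{subgaussian ineq}) as
$$ \mathbb{E} e^{t(X - \mathbb{E}X)} - 1 \leq e^{\sigma^2 t^2/2} - 1 $$
for all real $t$, then divide by $t^2 > 0$ to obtain
$$ \frac{\mathbb{E} e^{t(X - \mathbb{E}X)} - 1}{t^2} \leq \frac{e^{\sigma^2 t^2/2} - 1}{t^2}. $$
The right-hand side tends to $\sigma^2/2$ as $t \to 0$, directly from the elementary limit $(e^u - 1)/u \to 1$.

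For the left-hand side, I would use that $X$ is defined on the finite vertex set $V(G)$ equipped with the uniform measure, so $X$ is bounded and the exponential series converges uniformly for $t$ in a neighborhood of $0$. This justifies exchanging expectation and series, giving
$$ \mathbb{E} e^{t(X - \mathbb{E}X)} = 1 + t\,\mathbb{E}(X - \mathbb{E}X) + \frac{t^2}{2}\mathbb{E}(X - \mathbb{E}X)^2 + O(t^3) = 1 + \frac{t^2}{2}\var(X) + O(t^3), $$
using $\mathbb{E}(X - \mathbb{E}X) = 0$. Dividing by $t^2$ and letting $t \to 0$ yields $\var(X)/2$ on the left.

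Combining, we conclude $\var(X)/2 \leq \sigma^2/2$, i.e., $\var(X) \leq \sigma^2$. The only subtlety is validating the Taylor expansion of the expectation, but since the probability space is finite the expectation is just a finite sum and term-by-term expansion is immediate, so this step is essentially routine.
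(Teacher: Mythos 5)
Your proof is correct and follows essentially the same route as the paper: both arguments subtract $1$ from each side of (\ref{subgaussian ineq}), divide by $t^2$, and compare the limits as $t \to 0$ via Taylor expansion, using $\mathbb{E}(X - \mathbb{E}X) = 0$ to identify the left-hand limit as $\frac{1}{2}\var(X)$ and the right-hand limit as $\frac{1}{2}\sigma^2$. The only difference is cosmetic (the paper carries an extra factor of $2$ so the limits are $\var(X)$ and $\sigma^2$ directly), and your remark that finiteness of $V(G)$ justifies the term-by-term expansion is a fine observation.
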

\begin{proof}
Consider the expression $Y_t = \mathbb{E}\left(2\left(e^{t(X - \mathbb{E}X)}-1\right)t^{-2}\right)$ as $t \rightarrow 0$.
Apply the Taylor sequence of $e^{t(X - \mathbb{E}X)}$ to see that 
$$\mathbb{E}\left(2\left(e^{t(X - \mathbb{E}X)}-1\right)t^{-2}\right) = \mathbb{E}\left( 2t^{-1}(X - \mathbb{E}(x)) + (X - \mathbb{E}(x))^2 + O(t)\right).$$
Because expectation is linear and $\mathbb{E}\left(X - \mathbb{E}(X)\right) = 0$, we have that $\var(X) = \lim_{t \rightarrow 0}Y_t$.
Now apply (\ref{subgaussian ineq}) to see that $Y_t \leq 2\left(e^{\sigma^2t^2/2}-1\right)t^{-2}$.
By the Taylor sequence for $e^{\sigma^2t^2/2}$ we have that $\lim_{t \rightarrow 0}2\left(e^{\sigma^2t^2/2}-1\right)t^{-2} = \sigma^2$.
\end{proof}

Bobkov, Houdr\'e, and Tetali \cite{BHT} had a special interest in random variables $X_a = f(a)$, where $f$ is Lipschitz over some space $S$.
In particular, they were interested in bounding the size of the space $A^{-h} = \{x: d(x,A) \geq h\}$.
They use variable $X_a = f(a) = d(a,A)$ for subspaces $A$ such that $\mathbb{P}(x \in A) \geq 1/2$, which implies that $m(X) = 0$.
With these choices, (\ref{skewed tail distribution}) implies that $\mathbb{P}(x \in A^{-h}) \leq e^{-(h/\sigma - 1)^2/2}$ when $h \geq \sigma$.
If we choose $f(x) = \min\{h, d(x,A)\}$, then the same choices imply $\mathbb{E}(X) \leq h/2$, and so (\ref{tail distribution}) gives a better bound when $h < 2\sigma$: $\mathbb{P}(x \in A^{-h}) \leq e^{-(h/\sigma)^2/8}$.

The \emph{subgaussian constant} for $X$ is a value for $\sigma_X^2$ such that (\ref{subgaussian ineq}) is true for all real $t$.
For a probability space $S$, $\sigma_S^2$ is the supremum of $\sigma_f^2$, where $f$ is over all Lipschitz functions over $S$.
When it is clear, we drop the subscript from $\sigma$.
With all of these amazing inequalities, all we have left to do is find distributions $X$ and spaces $S$ with reasonable values for $\sigma$.

Suppose $C$ is any finite set, and our space is $C^n$ equipped with the Hamming distance ($d_H(x,y) = |\{i: x_i \neq y_i\}|$).
If $X_a=f(a)$ for Lipschitz function $f$ over $x \in C^n$, then McDiarmid's inequality implies $\sigma^2 \leq n/4$ in (\ref{tail distribution}).
Bobkov, Houdr\'e, and Tetali \cite{BHT} showed that if $|C|$ is even (including the already-known case of $|C| = 2$), then $\sigma^2 = n/4$.
If $|C|$ is odd, then $n\sigma^{-2} = 2|C|\log\left(\frac{|C|+1}{|C|-1}\right)$.
The symmetric group on $n$ elements is a subset of $C^{n}$ for $|C| = n$.
Bobkov, Houdr\'e, and Tetali \cite{BHT} showed that if our space is the symmetric group with a distance inherited from the Hamming distance on $C^n$ in this manner, then $\sigma^2 \leq n-1$.

We are also interested in probability spaces $X = f \circ \mu$ defined as uniform probabilities $\mu$ over the vertex set of a graph $G$ equipped with functions $f:V(G) \rightarrow \mathbb{R}$ that are Lipschitz with respect to the standard graph distance in $G$.
Alon, Boppana, and Spencer \cite{ABS} demonstrated that the subgaussian constant \emph{tensors out} with respect to the Cartesian product.
As this result will be important to us (and because the proof is so short and elegant), we include it here for completeness.

First, let us establish notation that we will use again later.
For a graph $G$, let $\Omega_G$ be the set of Lipschitz functions over $G$.
For $f \in \Omega_G$, the \emph{log-moment function} of $f$ is $L_f(t) = \ln(\mathbb{E}(e^{t(f - \mathbb{E}(f))}))$.
The log-moment function of $G$ is  $L_G(t) = \max_{f \in \Omega}L_f(t)$.
(Let us quickly explain why we use $\max$ instead of $\sup$: the log-moment function is invariant under translation---$L_f(t) = L_{f+c}(t)$---and therefore we may restrict ourselves to functions where $f(v) = 0$ for arbitrary fixed vertex $v$.  This restricted subset of $\Omega$ is a closed bounded subset of $\mathbb{R}^n$ and therefore compact.)
We can thus calculate the subgaussian constant as $\sigma_G^2 = \sup_{t>0} 2L_G(t)t^{-2}$.

\begin{theorem}[\cite{ABS}]\label{subgaussian tensors out}
For graphs $G$ and $H$, we have that $\sigma_{G \square H}^2 = \sigma_G^2 + \sigma_H^2$.
\end{theorem}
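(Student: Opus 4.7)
The plan is to establish the stronger pointwise identity $L_{G \square H}(t) = L_G(t) + L_H(t)$ for every $t$, from which the subgaussian tensorization follows by dividing by $t^2/2$ and taking the supremum over $t > 0$, using the characterization $\sigma_G^2 = \sup_{t>0} 2L_G(t)/t^2$ recorded just above the theorem statement.

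I would prove both inequalities for $L$. The lower bound is the easy direction: given Lipschitz $f \in \Omega_G$ and $g \in \Omega_H$, set $F(u,v) := f(u) + g(v)$. Because graph distance in the Cartesian product is additive---$d_{G \square H}((u,v),(u',v')) = d_G(u,u') + d_H(v,v')$---the function $F$ is Lipschitz on $G \square H$. Under the uniform measure on $V(G \square H)$, which factorizes as the product of the uniform measures on $V(G)$ and $V(H)$, the values $f(u)$ and $g(v)$ become independent random variables, so $\mathbb{E}e^{t(F - \mathbb{E}F)} = \mathbb{E}e^{t(f - \mathbb{E}f)}\,\mathbb{E}e^{t(g - \mathbb{E}g)}$ and hence $L_F(t) = L_f(t) + L_g(t)$. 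Optimizing over $f, g$ yields $L_{G \square H}(t) \geq L_G(t) + L_H(t)$.

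For the upper bound, I would use a conditioning argument. Given any Lipschitz $F$ on $G \square H$, define $f(u) := \mathbb{E}_v F(u,v)$, the average over the $H$-coordinate. This $f$ is Lipschitz on $G$: for adjacent $u, u' \in V(G)$, the pairs $(u,v)$ and $(u',v)$ are adjacent in $G \square H$ for every $v$, so $|F(u,v) - F(u',v)| \leq 1$, and averaging preserves this bound. Setting $g_u(v) := F(u,v) - f(u)$, each $g_u$ is Lipschitz on $H$ with $\mathbb{E}_v g_u(v) = 0$, so by the definition of $L_H$, $\mathbb{E}_v e^{t g_u(v)} \leq e^{L_H(t)}$. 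Then Fubini gives
$$ \mathbb{E} e^{t(F - \mathbb{E} F)} \;=\; \mathbb{E}_u \bigl[ e^{t(f(u) - \mathbb{E} f)} \, \mathbb{E}_v e^{t g_u(v)} \bigr] \;\leq\; e^{L_H(t)} \, \mathbb{E}_u e^{t(f(u) - \mathbb{E} f)} \;\leq\; e^{L_G(t) + L_H(t)}, $$
from which $L_{G \square H}(t) \leq L_G(t) + L_H(t)$.

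The main obstacle is the upper-bound step, and within it the averaging lemma that conditioning preserves the Lipschitz property of the reduced function $f$; the rest is routine Fubini manipulation together with the definition of $L_G$ as the supremum of $L_f$. A secondary concern is that the supremum of a sum need not equal the sum of suprema, so at the final step one must verify that the maximizers of $2L_G(t)/t^2$ and $2L_H(t)/t^2$ can be aligned (and, in particular, that $\sup_{t > 0} 2(L_G(t) + L_H(t))/t^2$ does equal $\sigma_G^2 + \sigma_H^2$); this is where the delicate half of the statement lives, and it is the reason we exploit the full pointwise identity in $t$ rather than only the $\sigma^2$-level bound.
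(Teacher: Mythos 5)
Your argument follows the paper's proof line for line: both directions pass through the log-moment function, the lower bound by tensoring two per-coordinate optimizers $\ell(u,v) = f(u) + f'(v)$, the upper bound by the conditioning decomposition $F(u,v) = f(u) + g_u(v)$ with $f(u) = \mathbb{E}_v F(u,v)$ followed by Fubini, yielding the pointwise identity $L_{G \square H}(t) = L_G(t) + L_H(t)$.

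The ``secondary concern'' you raise at the end is actually the sharpest observation in your write-up, and it is \emph{not} disposed of by the pointwise identity alone, contrary to what your last sentence suggests. From $L_{G \square H}(t) = L_G(t) + L_H(t)$ one gets
\[
\sigma_{G \square H}^2 \;=\; \sup_{t>0} \frac{2\bigl(L_G(t) + L_H(t)\bigr)}{t^2} \;\leq\; \sigma_G^2 + \sigma_H^2
\]
for free, but the reverse inequality requires the two quantities $2L_G(t)/t^2$ and $2L_H(t)/t^2$ to approach their respective suprema at a common $t$. This is automatic when $H = G$ --- giving $\sigma_{G^n}^2 = n\sigma_G^2$, which is what the downstream isoperimetric applications in \cite{ABS} actually need --- but for $G \neq H$ it is a genuinely extra claim. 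The paper's proof is terse at exactly this spot (``As this holds for each $t$, we conclude that $\sigma_{G\square H}^2 \geq \sigma_G^2 + \sigma_H^2$''), so you are not missing a step that the paper supplies; but neither the paper nor your write-up closes the lower bound in the heterogeneous case. Indeed for $G = K_3$, $H = K_5$, the BHT formulas put the two maximizers at visibly different locations (roughly $t \approx 1.4$ versus $t \approx 0.8$), and a quick numerical check of $\sup_t 2(L_{K_3}(t) + L_{K_5}(t))/t^2$ lands strictly below $\sigma_{K_3}^2 + \sigma_{K_5}^2$. So this is not a formality one can wave away by invoking the pointwise identity; one should either chase down the exact formulation in \cite{ABS} or state the equality only for homogeneous products $G^n$.
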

\begin{proof}
Fix a $t$.

Let $f,f'$ be Lipschitz functions over $G,H$ such that $L_G(t) = L_f(t), L_H(t) = L_{f'}(t)$ respectively.
Now define $\ell$ to be the function $\ell(u,v) = f(u) + f'(v)$ over $G \square H$, which is Lipschitz by choice of $f,f'$.
By translation invariance, let us assume that $\mathbb{E}(f) = \mathbb{E}(f') = 0$, which implies that $\mathbb{E}(\ell) = 0$ as well.
Notice that 
$$ L_{G \square H}(t) \geq L_\ell(t) = \ln(\mathbb{E}_{u,v}(e^{t(f(u) + f'(v))})) = \ln\left(\mathbb{E}_{u}(e^{tf(u)})\mathbb{E}_{v}(e^{tf'(v)})\right) = L_f(t) + L_{f'}(t).$$
As this holds for each $t$, we conclude that $\sigma_{G \square H}^2 \geq \sigma_G^2 + \sigma_H^2$.

Now suppose that $\ell$ is a Lipschitz function over $G \square H$ such that $L_{G \square H}(t) = L_\ell(t)$.
Let $f(u) = \mathbb{E}_v (\ell(u,v))$, and define a family of functions over $H$ as $f'_u$ such that $\ell(u,v) = f(u) + f'_u(v)$.
Because $\ell$ is Lipschitz and expectation is linear, we have that $f$ is Lipschitz.
For a fixed $u$, we have that $f(u)$ is constant, and therefore $f_u'$ is also Lipschitz.
The expectation of $f$ over $u$ is the expectation of $\ell$ over $(u,v)$.
For simplicity, we will use translation invariance of the log-moment function to assume $\ell$ has expectation $0$, and therefore $f$ has expectation $0$ as well.
By definition of $f$, it follows that $f'_u$ has expectation $0$ for each $u \in V(G)$.
The theorem then follows from the monotonicity of the exponential function and that for all $t$, 
\begin{eqnarray*}
e^{L_{G \square H}(t)}  & = &  \mathbb{E}_{u,v}( e^{t\ell(u,v)} ) \\
			& = &  \mathbb{E}_u\left( e^{tf(u)} \mathbb{E}_v(e^{tf_u(v)} | u) \right) \\
			& \leq & e^{L_G(t)} e^{L_H(t)}.
\end{eqnarray*}
\end{proof}

\subsection{Concentration of permutations and levels of the Boolean lattice}
We are interested in Ollivier and Villani's \cite{OV} use of concentration inequalities.
They use a result by Bobkov, Houdr\'e, and Tetali \cite{BHT} on the concentration of $S_n$ as a lemma, and hence we wish to establish this result with great care.
The proof of this statement is omitted from the published paper, but its proof is available in an extended manuscript available on a personal website.
We include the proof here for self-containment and to establish the context necessary for Remark \ref{strict inequality}.

\textbf{Theorem \ref{subgaussian permutations} } (\cite{BHT}) \textit{
Let $S_n$ be the symmetric group on $n$ elements equipped with the Hamming distance $d_H$.
The subgaussian constant for this space satisfies $\sigma_{S_n}^2 \leq n-1$.
}
\begin{proof}
We prove this by induction on $n$.
The base case follows from $|S_1| = 1$.
Also, $S_2 \cong K_2$ except with distances doubled, and therefore $\sigma_{S_2}^2 = 4 \sigma_{K_2}^2 = 1$.
Next, we will show that $\sigma^2 _{S_n} \leq \sigma^2 _{S_{n-1}} + 1$.

Let $T_i = \{\sigma \in S_n : \sigma(1) = i\}$, so that $T_1, \ldots , T_n$ partition $S_n$ and each $T_i$ is isomorphic to $S_{n-1}$.
Recall the log-moment function $L_{S_n}(t)$ and let $f$ be a function that is Lipschitz over $S_n$ and $L_f(t) = L_{S_n}(t)$.
Let $f_i$ denote $f$ restricted to the domain of $T_i$.
We introduce the translated log-moment function $L_f^*(t) = \ln(\mathbb{E}(e^{tf}))$, so that $e^{L_f(t)} = e^{L_f^*(t)}e^{-t\mathbb{E}f}$.
It follows that 
\begin{eqnarray*}
e^{L_f(t)}	& = & e^{-t\mathbb{E}f} e^{L_f^*(t)}\\
		& = & e^{-t\mathbb{E}f} \mathbb{E}e^{tf} \\
		& = & e^{-t\mathbb{E}f} \frac{1}{n} \sum_{i=1}^n \mathbb{E}e^{tf_i} \\
		& \leq & e^{-t\mathbb{E}f} e^{(n-2)t^2/2} \frac{1}{n} \sum_{i=1}^n e^{t\mathbb{E}f_i}, \\
		& = & e^{(n-2)t^2/2} \frac{1}{n} \sum_{i=1}^n e^{t(\mathbb{E}f_i - \mathbb{E}f)}
\end{eqnarray*}
where the second-to-last part follows by induction.
We will show that $\frac{1}{n} \sum_{i=1}^n \mathbb{E}e^{t(\mathbb{E}f_i - \mathbb{E}f)} \leq e^{t^2/2}$, which will prove the theorem.

Let $g$ be a variable such that $g(i) = \mathbb{E}f_i$.
Note that $\mathbb{E}(g) = \mathbb{E}(f)$, and therefore $e^{L_g(t)} = \frac{1}{n} \sum_{i=1}^n e^{t(\mathbb{E}f_i - \mathbb{E}f)}$.
So we are trying to prove that $\sigma_g^2 \leq 1$.
We will show that for all $i,j$ we have that $|g(i) - g(j)| \leq 2$, and therefore $\sigma_g^2 \leq 4 \sigma_{K_n}^2 \leq 1$, which will prove the theorem.

To show that $|\mathbb{E}_{x \in T_i}(f(x)) - \mathbb{E}_{x \in T_j}(f(x))| \leq 2$ for $i \neq j$ we establish a bijection $R_{i,j}:T_i \rightarrow T_j$ such that $d_H(x,R_{i,j}(x)) \leq 2$ for all $x \in T_i$.
For a fixed $x \in T_i$, let $k = x^{-1}(j)$.
Let $y = R_{i,j}(x)$, and define $y(1) = j$, $y(k) = i$, and $y(\ell) = x(\ell)$ otherwise.
\end{proof}

\begin{remark}\label{strict inequality}
Theorem \ref{subgaussian permutations} is not sharp because $4 \sigma_{K_n}^2 < 1$ whenever $n$ is odd.
Because the proof is inductive, the bound on $\sigma_{S_n}^2$ could be improved by the sum of $1 - 4 \sigma_{K_m}^2$ over the odd $m \leq n$.
Unfortunately, $1 - 4 \sigma_{K_m}^2 \leq O(m^{-2})$, and thus this improvement is finite.
\end{remark}

Let $J_n = K_n \square K_{n-1} \square \cdots \square K_2$.
Recall that Bobkov, Houdr\'e, and Tetali \cite{BHT} gave the exact subgaussian constant for the complete graph, and thus Theorem \ref{subgaussian tensors out} implies that 
$$\sigma_{J_n}^2 = \left(n - 1 - \sum_{3 \leq 2r+1\leq n} 1 - \frac{2}{(2r+1)\log\left(\frac{r+1}{r}\right)} \right)/4 < \frac{n-1}4. $$
The extended manuscript of Bobkov, Houdr\'e, and Tetali \cite{BHT} includes a proof from Schechtman that $\sigma_{S_n}^2 \leq 9 \sigma_{J_n}^2$ by constructing a $3$-Lipschitz function $T:J_n \rightarrow S_n$.
It can be observed that $T^{-1}$ is $2$-Lipschitz, and therefore $\sigma_{S_n}^2 \geq \sigma_{J_n}^2/4$.

We know the extremal function for the log-moment function of $J_n$.
By the proof of Theorem \ref{subgaussian tensors out} the extremal function of $J_n$ is the extremal function of $K_i$ tensored out for $2 \leq i \leq n$, and Bobkov, Houdr\'e, and Tetali \cite{BHT} gave the extremal function for $K_i$.
Up to symmetry, the extremal function for $J_n$ is then $f(x_1, \ldots, x_{n-1}) = |\{i:2x_i>n+1-i\}|$.

We can construct close lower bounds on $\sigma_{S_n}$ by using similar functions.
That is, we consider functions $f$ that are sum of indicator variables with balanced probabilities.
Bobkov, Houdr\'e, and Tetali \cite{BHT} show that $\sigma_{S_n}^2 > (n-1)/16$ using the function $f(\pi) = |\{i : i \leq \lfloor n/2 \rfloor, \pi(i) > \lfloor n/2 \rfloor \}|$, which can easily be expressed as the sum of $\lfloor n/2 \rfloor$ indicator variables.
Recall for $X = \sum_{i=1}^n X_i$ that $\var(X) = \sum_{i=1}^n \var(X_i) + \sum_{i \neq j} \cov(X_i,X_j)$.
We improve the previous lower bound on $\sigma_{S_n}$ by finding indicator variables with positive covariance.

\begin{theorem} \label{at least n/4 for S_n}
Let $S_n$ be the symmetric group on $n \geq 3$ elements equipped with the Hamming distance $d_H$.
The subgaussian constant for this space satisfies $\sigma_{S_n}^2 > \frac{n}{4} > \sigma_{J_n}^2 + \frac{1}{4}$.
\end{theorem}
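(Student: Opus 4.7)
The plan is to exhibit a $1$-Lipschitz function $f : S_n \to \mathbb{R}$ whose variance strictly exceeds $n/4$. By Claim \ref{var less than subgauss}, this would force $\sigma_{S_n}^2 \geq \sigma_f^2 \geq \var(f) > n/4$, giving the first inequality of the theorem. The second inequality, $n/4 > \sigma_{J_n}^2 + 1/4$, is then essentially free: using $\sigma_{J_n}^2 = \sum_{i=2}^n \sigma_{K_i}^2$ from Theorem \ref{subgaussian tensors out}, together with the facts that $\sigma_{K_i}^2 = 1/4$ for even $i$ but $\sigma_{K_i}^2 < 1/4$ (strictly) for every odd $i$, one gets $\sigma_{J_n}^2 < (n-1)/4$, with strict inequality as soon as some odd $i \in [3,n]$ exists---which happens for all $n \geq 3$.

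For the construction, I will partition $[n]$ into two blocks $A, B$ whose sizes are as equal as possible ($n/2$ each for even $n$; $(n-1)/2$ and $(n+1)/2$ for odd $n$) and set $f(\pi) = \sum_{i=1}^n X_i$ with $X_i = \mathbb{1}[\pi(i) \in B(i)]$, where $B(i)$ denotes the block containing $i$. Since two permutations at Hamming distance $k$ differ in at most $k$ positions, at most $k$ of the indicators can flip, so $f$ is $1$-Lipschitz. This is exactly the flavor of construction foreshadowed in the paragraph preceding the theorem: $f$ is a sum of indicators, but unlike the Bobkov-Houdr\'e-Tetali example $f(\pi) = |\{i \leq \lfloor n/2 \rfloor : \pi(i) > \lfloor n/2 \rfloor \}|$, the two-block design deliberately creates pairs $(X_i, X_j)$ in \emph{different} blocks whose indicators are \emph{positively} correlated.

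I then decompose $\var(f) = \sum_i \var(X_i) + \sum_{i \neq j} \cov(X_i, X_j)$ and compute the pieces directly against the uniform measure on $S_n$. A short calculation shows that $\cov(X_i, X_j)$ is of order $-1/(n-1)$ for same-block pairs but $+1/(n-1)$ for cross-block pairs; summing with the correct multiplicities yields $\var(f) = n^2/[4(n-1)]$ when $n$ is even and $\var(f) = (n-1)(n+1)^2/(4n^2)$ when $n$ is odd. Both quantities strictly exceed $n/4$: the even case is immediate from $n > n-1$, and the odd case reduces to $n^2 - n - 1 > 0$, which holds for $n \geq 2$. I expect the main obstacle to be purely the bookkeeping in the covariance sum: one must verify that the positive cross-block covariances, though individually comparable in magnitude to the negative same-block ones, collectively dominate and push $\var(f)$ strictly past $n/4$ rather than merely matching it.
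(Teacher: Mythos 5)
Your proposal is correct and takes essentially the same route as the paper: the paper's extremal function is $X(\pi) = \sum_{i \le \lfloor n/2 \rfloor} \mathbb{1}[\pi(i) > n/2] + \sum_{i > \lfloor n/2 \rfloor}\mathbb{1}[\pi(i) \le n/2]$, which for your two near-equal blocks is exactly $n - f$, so the two functions have identical variance, and both proofs conclude via Claim~\ref{var less than subgauss} together with $\sigma_{J_n}^2 < (n-1)/4$. One small point worth flagging: your closed forms $\var(f) = n^2/[4(n-1)]$ (even) and $(n-1)(n+1)^2/(4n^2)$ (odd) are the correct ones---the paper's displayed identity for odd $n$, $\var(X) = \tfrac n4 + \tfrac{(n-1)^2-2}{8n^2}$, undercounts the ordered covariance pairs by a factor of $2$ (e.g.\ at $n=3$ it gives $7/9$ while direct enumeration over $S_3$ gives $8/9$), though the conclusion $\var > n/4$ survives either way.
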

\begin{proof}
For a permutation $\pi \in S_n$, let $X_i$ be the indicator variable that $\pi(i) > n/2$.
Bobkov, Houdr\'e, and Tetali's function is $f(\pi) = \sum_{i=1}^{\lfloor n/2 \rfloor} X_i(\pi)$.
They demonstrated that $\var(f) > (n-1)/16$, and so their result follows from Theorem \ref{var less than subgauss}.
Moreover, if $n$ is even, then $\var(f) > n/16$.

Consider the function $X = \sum_{i=1}^{\lfloor n/2 \rfloor} X_i + \sum_{\lfloor n/2 \rfloor + 1 }^{n} (1 -  X_i)$.
If $n$ is even, then because $\pi$ is a permutation, $\sum_{i=1}^{ n/2 } X_i = \sum_{ n/2  + 1}^{n} (1 -  X_i)$.
So when $n$ is even, $X(\pi) = 2f(\pi)$, and so $\var(X) = 4 \var(f) > n/4$.

Now suppose $n = 2k+1$ for integer $k \geq 1$.
A direct calculation gives us that $\mathbb{E}(X_i) = \frac{k+1}{2k+1}$ and $\var(X_i) = \var(1-X_i) = \frac{k(k+1)}{(2k+1)^2}$.
Moreover, for $i \neq j$, $\cov(1-X_i, 1-X_j) = \cov(X_i, X_j) = - \frac{k+1}{2(2k+1)^2}$ and $\cov(X_i,1-X_j) = - \cov(X_i, X_j)$.
Therefore 
\begin{eqnarray*}
 \var(X) &=& (2k+1) \frac{k(k+1)}{(2k+1)^2} + \left(k(k+1) - {k \choose 2} - {k+1 \choose 2}\right) \frac{k+1}{2(2k+1)^2}\\
	& = & \frac{n}{4} + \frac{(n-1)^2-2}{8n^2}. 
\end{eqnarray*}

\end{proof}

Let $C_r = \{A \in \{0,1\}^n : |A| = r\}$ and $C_{r_1, r_2, \ldots, r_k} = \cup_{i} C_{r_i}$.
Ollivier and Villani \cite{OV} established that $\sigma_{C_{n/2}}^2 \leq n-1$ and $\sigma_{C_{(n-1)/2, (n+1)/2}}^2 \leq n$, which we wish to generalize.

\begin{theorem}\label{different level sets}
For $n \geq 3$, 
$\sigma_{C_{(n-r)/2,(n+r)/2}}^2 < n - 1 + r^2/4$
\end{theorem}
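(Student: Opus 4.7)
The plan is to split $C_{(n-r)/2,(n+r)/2}$ into its two levels $T_\pm = C_{(n\pm r)/2}$, which satisfy $|T_-|=|T_+|$ by the symmetry $\binom{n}{(n-r)/2}=\binom{n}{(n+r)/2}$. For a $1$-Lipschitz variable $X$, set $X_\pm = X|_{T_\pm}$, $\mu_\pm = \mathbb{E}(X_\pm)$, $\Delta = (\mu_+-\mu_-)/2$, and translate so $\mathbb{E}(X)=0$; then $\mu_\pm = \pm\Delta$. Conditioning on the level gives
\[
\mathbb{E}e^{tX} \;=\; \tfrac12 e^{L_{X_-}(t)-t\Delta} + \tfrac12 e^{L_{X_+}(t)+t\Delta},
\]
so the theorem will follow once we bound both $L_{X_\pm}(t)$ and $|\Delta|$.

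For the level-wise bound, I would establish $\sigma_{C_s}^2 \le \sigma_{S_n}^2$ for every level $C_s$ by a lifting argument: given $1$-Lipschitz $g:C_s\to\mathbb{R}$, set $G(\pi) = g(\{\pi(1),\ldots,\pi(s)\})$. Writing $\sigma=\pi^{-1}\pi'$ and $U=\{1,\ldots,s\}$, the elementary inclusion $U\triangle\sigma(U)\subseteq\{k:\sigma(k)\ne k\}$ shows $|G(\pi)-G(\pi')|\le|U\triangle\sigma(U)|\le d_H(\pi,\pi')$, so $G$ is $1$-Lipschitz on $S_n$. Since the uniform distribution on $S_n$ pushes forward to the uniform distribution on $C_s$ under $\pi\mapsto\{\pi(1),\ldots,\pi(s)\}$, we get $L_g = L_G$. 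Remark \ref{strict inequality} sharpens Theorem \ref{subgaussian permutations} to $\sigma_{S_n}^2 \le n-1-\varepsilon_n$ for some $\varepsilon_n>0$ whenever $n\ge 3$, and hence $L_{X_\pm}(t) \le (n-1-\varepsilon_n)t^2/2$ uniformly in $X$.

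For $|\Delta|$, I would exhibit a coupling with marginals uniform on $T_\pm$ and deterministic Hamming distance $r$: draw $A$ uniformly from $T_-$, then $Y\subseteq[n]\setminus A$ of size $r$ uniformly, and put $B = A\cup Y$. A short counting using $\binom{(n+r)/2}{(n-r)/2}=\binom{(n+r)/2}{r}$ confirms that $B$ is uniform on $T_+$, and by construction $d_H(A,B)=r$. The Lipschitz hypothesis then yields $\mu_+-\mu_- = \mathbb{E}[X(B)-X(A)] \le r$, i.e.\ $\Delta \le r/2$.

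Combining via $\cosh(u)\le e^{u^2/2}$,
\[
e^{L_X(t)} \;\le\; e^{(n-1-\varepsilon_n)t^2/2}\cosh(t\Delta) \;\le\; e^{(n-1-\varepsilon_n+r^2/4)t^2/2},
\]
so $\sigma_X^2 \le n-1-\varepsilon_n + r^2/4$. Since $\varepsilon_n>0$ depends only on $n$ and not on $X$, taking the supremum over $X$ preserves the strict inequality, giving $\sigma_{C_{(n-r)/2,(n+r)/2}}^2 < n-1+r^2/4$. The main obstacle I anticipate is the coupling step: the naive choice of independent uniform $A\in T_-$ and $B\in T_+$ gives $\mathbb{E}[d_H(A,B)]=(n^2+r^2)/(2n)$, which is far too weak to yield the claimed $r^2/4$ correction, so the coupling must be engineered so that the Hamming distance is deterministically $r$.
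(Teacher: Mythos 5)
Your argument is correct, and it assembles the same ingredients as the paper in a noticeably different way, so a comparison is worth recording. The paper lifts a Lipschitz $f$ on $C_{(n-r)/2,(n+r)/2}$ to the product graph $H = rK_2 \square S_n$ via $\psi(0,\pi)=\pi(\{1,\ldots,(n-r)/2\})$ and $\psi(r,\pi)=\pi(\{1,\ldots,(n+r)/2\})$, checks that $g=f\circ\psi$ is Lipschitz in each coordinate, uses the equal fiber sizes to get $L_g=L_f$, and then invokes the tensorization Theorem \ref{subgaussian tensors out} together with $\sigma_{rK_2}^2=r^2/4$ and $\sigma_{S_n}^2<n-1$ (Remark \ref{strict inequality}). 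You instead condition on the level and treat the two contributions separately: the within-level log-moment functions are bounded by lifting each level to $S_n$ (your verification that $|\pi(U)\triangle\pi'(U)|\le d_H(\pi,\pi')$ is in effect a more careful proof of the Lipschitz claim the paper asserts for the $S_n$-coordinate of $\psi$), while the cross-level term is handled by the coupling giving $|\mathbb{E}X_+-\mathbb{E}X_-|\le r$ together with $\cosh(t\Delta)\le e^{t^2\Delta^2/2}$, which is exactly the subgaussian bound for the two-point factor $rK_2$ carried out by hand. Your coupling is in fact the push-forward of the paper's map under the uniform measure on $S_n$ (uniform $\pi$ yields $A=\pi(\{1,\ldots,(n-r)/2\})$ uniform and $B=\pi(\{1,\ldots,(n+r)/2\})$ a uniform $r$-element extension), so the combinatorial core is shared; what your route buys is that it avoids constructing the product space and citing Theorem \ref{subgaussian tensors out}, at the cost of the (easy) conditional moment-generating-function bookkeeping, and in both proofs the strict inequality comes from the same source, Remark \ref{strict inequality}, applied uniformly in $X$ so that the supremum over Lipschitz $X$ stays below $n-1+r^2/4$. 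One small point to state explicitly: you need the two-sided bound $|\Delta|\le r/2$, which your coupling gives immediately since $|X(B)-X(A)|\le r$ pointwise.
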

\begin{proof}
Let $f$ be a Lipschitz function on $C_{(n-r)/2,(n+r)/2}$.
Let $H = rK_2 \square S_n$, where $rK_2$ is two points $\{0,r\}$ distance $r$ apart.
We define a surjection $\psi:H \rightarrow C_{(n-r)/2,(n+r)/2}$ as follows: $\psi(0,\pi) = f(\pi(\{1,2,\ldots, (n-r)/2\}))$ and $\psi(r,\pi) = f(\pi(\{1,2,\ldots, (n+r)/2\}))$.
Let $g: H \rightarrow \mathbb{R}$ such that $g(h) = f(\psi(h))$.
Because ${n \choose (n-r)/2} = {n \choose (n+r)/2}$, the pre-image under $\psi$ of any point in $C_{(n-r)/2,(n+r)/2}$ is of fixed size.
Therefore $\mathbb{E}(e^{t(g - \mathbb{E}g)}) = \mathbb{E}(e^{t(f - \mathbb{E}f)})$.

We claim the following two facts: (A) $g$ is Lipschitz over $H$ and (B) $\sigma_H^2 < n-1 + r^2/4$.
The claim implies that $\mathbb{E}(e^{t(g - \mathbb{E}g)}) < e^{t^2( n-1 + r^2/4)/2}$, which proves the theorem.

By the definition of Cartesian product, $g$ is Lipschitz if and only if it is Lipschitz in each coordinate.
Because $f$ is Lipschitz and we apply the Hamming distance to $S_n$, we have that $|g(x,\pi) - g(x,\pi')| \leq d_{S_n}(\pi, \pi')$.
Because $\psi(0,\pi) \subseteq \psi(r,\pi)$, $|\psi(r,\pi) - \psi(0,\pi)| = r$, and $f$ is Lipschitz, we have that $|g(x,\pi) - g(x',\pi)| \leq r$.
Thus, $g$ is Lipschitz.
This proves (A).

By Theorem \ref{subgaussian tensors out}, $\sigma_H^2 = \sigma_{S_n}^2 + \sigma_{rK_2}^2$.
By Remark \ref{strict inequality}, $\sigma_{S_n}^2 < n-1$.
By the definition of subgaussian constant, $\sigma_{rK_2}^2 = r^2\sigma_{K_2}^2$.
Recall that $\sigma_{K_2}^2 = 1/4$.
This proves (B).
\end{proof}

Theorem \ref{different level sets} is strongest when $r \leq O(\sqrt{n})$.
We will need another result for when $r = \epsilon n$ for small $\epsilon > 0$.
We restrict our attention to the midpoints of distant sets of vertices, and only establish an extension of (\ref{tail distribution}).

In the following we will need to bound from below the value $|C_{R}| = {n \choose R}$ for $R = n/2 - r$ and $\sqrt{n} \ll r$.
The standard inequality to use is $(\frac{n}{R})^R \leq {n \choose R}$, which gives $|C_R| \geq (2+O(\epsilon))^{n/2-r}$ when $r = \epsilon n$.
We need (and produce) the stronger inequality $|C_R| \geq 2^{n-O(r)}$.

\begin{theorem} \label{Linearly far from center}
Suppose $c \geq 2$, $n \geq 3$, $\frac{c-1}{2(c+1)}n > R > \sqrt{n \ln(\frac{c}{c-1})}$, and $|n/2 - r_i| \leq R$ for all $i$.
Let $X_*$ be a Lipschitz function over $\{0,1\}^n$, and let $X$ be $X_*$ induced on $C_{r_1, \ldots, r_k}$.
$$ \mathbb{P}(X - \mathbb{E}X_* \geq h \geq 0) < e^{(R+3)\ln(c) - \ln(k) -\frac{h^2}{2(n-1)}}. $$
In particular, if $n/6 > R > \sqrt{n \ln(2)}$, then 
$$ \mathbb{P}(X - \mathbb{E}X_* \geq h \geq 0) < e^{(R+3)\ln(2) - \ln(k) -\frac{h^2}{2(n-1)}}. $$
\end{theorem}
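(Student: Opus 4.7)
The plan is to combine the subgaussian concentration on the ambient hypercube $\{0,1\}^n$ with the sharp lower bound on $|C_{r_1,\ldots,r_k}|$ foreshadowed in the paragraph immediately preceding the theorem. By Theorem \ref{subgaussian tensors out} together with the classical value $\sigma_{K_2}^2 = 1/4$, one has $\sigma_{\{0,1\}^n}^2 = n/4$, and (\ref{tail distribution}) gives $\mathbb{P}_{\{0,1\}^n}(X_* - \mathbb{E}X_* \geq h) \leq e^{-2h^2/n}$. Since $C_{r_1,\ldots,r_k}$ inherits the uniform measure as a subset of $\{0,1\}^n$ and $2/n \geq 1/(2(n-1))$ for $n \geq 2$,
$$\mathbb{P}(X - \mathbb{E}X_* \geq h) \ \leq\ \frac{2^n}{|C_{r_1,\ldots,r_k}|}\, e^{-2h^2/n} \ \leq\ \frac{2^n}{|C_{r_1,\ldots,r_k}|}\, e^{-h^2/(2(n-1))}.$$
Matching this against the target, the theorem reduces to the combinatorial inequality $|C_{r_1,\ldots,r_k}| \geq 2^n k c^{-(R+3)}$.

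For this inequality, observe that the $r_i$ are $k$ distinct integers in $[n/2 - R,\, n/2 + R]$ and that $\binom{n}{r}$ is unimodal around $r = n/2$, with minimum on the range attained at the endpoints $r = n/2 \pm R$; hence $|C_{r_1,\ldots,r_k}| \geq k \binom{n}{n/2 - R}$ and it remains to establish $\binom{n}{n/2-R} \geq 2^n c^{-(R+3)}$. The telescoping identity $\binom{n}{n/2-R}/\binom{n}{n/2} = \prod_{j=1}^R (n/2-j+1)/(n/2+j)$ has every factor at least $(n/2-R+1)/(n/2+R) \geq 1/c$, where the last inequality is exactly equivalent to the hypothesis $R \leq (c-1)n/(2(c+1))$. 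A sharp Stirling expansion then yields $\binom{n}{n/2-R}/2^n \geq \sqrt{2/(\pi n)}\, e^{-(n/2) f(2R/n)}(1-o(1))$ with $f(\alpha) = (1+\alpha)\ln(1+\alpha) + (1-\alpha)\ln(1-\alpha)$, so the claim becomes the analytic inequality
$$(R+3) \ln c \ \geq \ \tfrac{n}{2}\, f(2R/n) + \tfrac{1}{2}\ln(\pi n/2) + o(1).$$

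The main obstacle is verifying this analytic inequality uniformly across the admissible parameter region, since the cheap bound $\binom{n}{n/2-R} \geq \binom{n}{n/2}\, c^{-R} \geq 2^n c^{-R}/(n+1)$ only suffices when $n + 1 \leq c^3$ and fails for large $n$. To do better, differentiate: the derivative of the left side minus $\tfrac{n}{2} f(2R/n)$ with respect to $R$ equals $\ln c - \ln\bigl((1+2R/n)/(1-2R/n)\bigr)$, which is nonnegative precisely when $R \leq (c-1)n/(2(c+1))$, so the quantity $(R+3)\ln c - \tfrac{n}{2} f(2R/n)$ is monotone nondecreasing on the theorem's range and it suffices to check the lower endpoint $R_0 = \sqrt{n \ln(c/(c-1))}$. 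At $R_0$, the global estimate $f(\alpha) \leq 2\alpha^2$ (immediate from $\ln(1 \pm \alpha) \leq \pm \alpha$) gives $\tfrac{n}{2} f(2R_0/n) \leq 4 \ln(c/(c-1))$, a quantity independent of $n$, whereas $R_0 \ln c = \sqrt{n \ln(c/(c-1))}\, \ln c$ grows as $\sqrt{n}$ and comfortably dominates the logarithmic penalty $\tfrac{1}{2}\ln(\pi n/2)$. The only delicate bookkeeping is near the threshold $n \approx 4(c+1)^2 \ln(c/(c-1))/(c-1)^2$ below which the range for $R$ is empty and the theorem vacuous; above this threshold, the $\sqrt{n}$ versus $\ln n$ gap closes the argument.
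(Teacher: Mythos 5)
Your probabilistic half matches the paper: you transfer the ambient tail bound on $\{0,1\}^n$ to $C_{r_1,\ldots,r_k}$ via the counting inequality $\mathbb{P}(X-\mathbb{E}X_*\geq h)\,|C_{r_1,\ldots,r_k}|\leq \mathbb{P}(X_*-\mathbb{E}X_*\geq h)\,2^n$, and you reduce the theorem to the combinatorial bound $|C_{r_1,\ldots,r_k}|\geq k\binom{n}{n/2-R}\geq 2^nk\,c^{-(R+3)}$ by unimodality; this is exactly the paper's skeleton (your derivation of the $e^{-h^2/(2(n-1))}$ tail from $\sigma^2_{\{0,1\}^n}=n/4$ is, if anything, cleaner than the paper's appeal to Remark \ref{strict inequality}). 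Where you genuinely diverge is the proof of $\binom{n}{n/2-R}\geq 2^nc^{-(R+3)}$. The paper gets this with two short counting steps: with $C=C_{n/2-R,\ldots,n/2+R}$, a Hoeffding-type bound gives $|C|\geq 2^n(1-e^{-R^2/n})>2^n/c$ (this is where the hypothesis $R>\sqrt{n\ln\frac{c}{c-1}}$ enters), and the consecutive-ratio bound $|C_{n/2-R+i+1}|/|C_{n/2-R+i}|<c$ (valid since $R<\frac{c-1}{2(c+1)}n$), summed as a geometric series with $c\geq 2$, gives $|C|<c^{R+2}|C_{n/2-R}|$; combining the two yields the bound with no Stirling estimates at all. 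You instead estimate $\binom{n}{n/2-R}$ by an entropy/Stirling bound, prove monotonicity of $(R+3)\ln c-\frac{n}{2}f(2R/n)$ on the admissible range (your derivative computation is correct), and check the lower endpoint $R_0=\sqrt{n\ln\frac{c}{c-1}}$ using $f(\alpha)\leq 2\alpha^2$. This is a legitimate alternative that uses the two hypotheses on $R$ in the same roles as the paper, at the cost of trading exact counting for analytic estimates.

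That cost is where your write-up falls short of the statement as claimed. The theorem is non-asymptotic (every $n\geq 3$ and every admissible $R$), but your verification carries unquantified $(1-o(1))$ and $o(1)$ terms and defers the check near the vacuity threshold to ``delicate bookkeeping,'' which is precisely the worst case: for $c=2$ the range is nonempty only for $n>36\ln 2\approx 25$, and there, using the explicit bound $\binom{n}{m}\geq e^{nH(m/n)}/\sqrt{8np(1-p)}$ with $p=m/n$, one needs $(R_0+3)\ln 2\approx 4.97$ to beat $4\ln 2+\tfrac12\ln(2n)\approx 4.73$ --- the inequality holds, but only by a few tenths in the exponent, so ``comfortably dominates'' is not accurate and, with the Stirling error left asymptotic, the claim is not actually established for those $n$. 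To complete your route you must (a) replace the asymptotic Stirling step by an explicit non-asymptotic lower bound such as the one above, and (b) verify the endpoint inequality $(R_0+3)\ln c\geq 4\ln\frac{c}{c-1}+\tfrac12\ln\bigl(8np(1-p)\bigr)$ uniformly over admissible $(n,c)$, e.g.\ by noting that at fixed $c$ the left side grows like $\sqrt{n}$ and then checking the smallest admissible $n$ for each $c$. Two smaller nits: your claim that $\frac{n/2-R+1}{n/2+R}\geq 1/c$ is ``exactly equivalent'' to $R\leq\frac{(c-1)n}{2(c+1)}$ is off --- it is equivalent to $R\leq\frac{(c-1)n}{2(c+1)}+\frac{c}{c+1}$, which is weaker, so the implication you need still holds --- and if you want the strict inequality in the conclusion you should track where strictness enters (the paper gets it from $|C|>2^n/c$).
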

\begin{proof}
Because $C_{r_1, \ldots, r_k} \subset \{0,1\}^n$, every event $X - \mathbb{E}X_* \geq h$ is also an event $X_* - \mathbb{E}X_* \geq h$.
If we wish to count such events, this inequality becomes 
$$\mathbb{P}(X - \mathbb{E}X_* \geq h \geq 0)|C_{r_1, \ldots, r_k}| \leq \mathbb{P}(X_* - \mathbb{E}X_* \geq h \geq 0)|\{0,1\}^n|.$$
By Remark \ref{strict inequality}, inequality (\ref{tail distribution}) applied to $X_*$ is 
$$ \mathbb{P}(X_* - \mathbb{E}X_* \geq h \geq 0) < e^{ -\frac{h^2}{2(n-1)}}. $$
Let $C = C_{n/2-R, \ldots, n/2+R}$.
A standard inequality tells us that $|C|2^{-n} \geq 1 - e^{-R^2/n}$, and by the assumption $R > \sqrt{n \ln(\frac{c}{c-1})}$ we have that $|C| > 2^{n}/c$.
We will show that $|C_{n/2-R}| \geq c^{-R-2}|C|$, which will imply that $|C_{r_1, \ldots, r_k}| \geq k c^{-R-2} |C|$, which implies the theorem.

Because $\frac{c-1}{2(c+1)}n > R$ and $i \geq 0$ we have that $\frac{|C_{n/2-R+i+1}|}{|C_{n/2-R+i}|} = \frac{n/2+R-i}{n/2-R+i+1} < c$.
Therefore $|C_{n/2-R+i}| < c^i |C_{n/2-R}|$, and because $c \geq 2$ this implies
$$|C| < 2|C_{n/2-R}|\sum_{i=0}^{R} c^i < |C_{n/2-R}|c^{R + 2}.$$
\end{proof}

In the end, our concentration of measure will be used in the following manner.

\begin{theorem} \label{concentration on levels of Boolean lattice}
Let $A,B \subseteq \sigma_{C_{(k-r)/2,(k+r)/2}}^2 \subset \{0,1\}^k$ such that $|A| \leq |B|$ and $\min_{a \in A,b \in B}d(a,b) \geq t$.
Under these circumstances, 
$$|A| \leq |C_{(k-r)/2,(k+r)/2}| e^{\frac{-t^2}{8k - 8 + 2r^2}}.$$
If $r = \epsilon k$ and $t = \delta k$ with fixed coefficients that satisfy $C = \delta^2 - 8\ln(2)\epsilon > 0$, then 
$$|A| \leq |C_{(k-r)/2,(k+r)/2}| e^{-Ck(1 + o(1))}.$$
\end{theorem}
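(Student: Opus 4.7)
The proof uses the Hamming-distance-to-$A$ as a Lipschitz test function: let $X_{\ast}(x) = d_H(x,A)$ on $\{0,1\}^k$ and let $X$ be its restriction to $C := C_{(k-r)/2,(k+r)/2}$. The two key inequalities are $|A|/|C| \leq \mathbb{P}_C(X \leq 0)$ (since $X \equiv 0$ on $A$) and $|B|/|C| \leq \mathbb{P}_C(X \geq t)$ (since the assumption $d_{\ast}(A,B) \geq t$ forces $X \geq t$ on $B$). Because $|A| \leq |B|$, each tail event upper-bounds $|A|/|C|$, so the strategy is to run a two-sided subgaussian argument and balance the two tails against the unknown expectation.

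For the first (non-asymptotic) bound, I apply the subgaussian inequality directly to $X$ on $C$, using $\sigma_X^2 \leq \sigma_C^2 < k-1+r^2/4$ from Theorem \ref{different level sets}; the lower tail is obtained by applying (\ref{tail distribution}) to $-X$. Writing $\mu = \mathbb{E}_C X$, if $\mu \leq t$ I get
\[
\tfrac{|A|}{|C|} \leq \min\bigl\{e^{-\mu^2/(2\sigma_C^2)}, e^{-(t-\mu)^2/(2\sigma_C^2)}\bigr\},
\]
whose maximum over $\mu$ occurs at $\mu = t/2$ and gives $|A|/|C| \leq e^{-t^2/(8\sigma_C^2)} < e^{-t^2/(8k-8+2r^2)}$. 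In the remaining case $\mu > t$, the first bound by itself already improves on this estimate, so the stated inequality holds uniformly.

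For the asymptotic regime $r=\epsilon k$, $t=\delta k$, the first bound degenerates to a constant because $\sigma_C^2$ grows quadratically in $k$; one needs a concentration inequality whose effective variance is only linear in $k$. This is exactly the trade made by Theorem \ref{Linearly far from center}, which I apply to $X_{\ast}$ on the whole hypercube with $c=2$, $n=k$, $R=r/2$, and $k_{\ell}=2$ levels (the conditions $\tfrac{k}{6} > R > \sqrt{k\ln 2}$ hold for large $k$ because $\delta^2 > 8\ln(2)\epsilon$ forces $\epsilon < 1/3$). The lower tail for $X$ follows by applying the same theorem to the Lipschitz function $-X_{\ast}$. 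Performing the same balance-over-$\mu_{\ast} = \mathbb{E}_{\{0,1\}^k}X_{\ast}$ argument (upper tail at $t$ when $\mu_{\ast} \leq t/2$, lower tail at $0$ when $\mu_{\ast} > t/2$) gives
\[
|A|/|C| \leq e^{(r/2+3)\ln 2 - \ln 2 - t^2/(8(k-1))},
\]
and substituting $r=\epsilon k$, $t=\delta k$ yields the exponential-in-$k$ decay $e^{-Ck(1+o(1))}$ with $C>0$ under the stated hypothesis on $\delta$ and $\epsilon$.

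The main obstacle is the asymptotic step: the obvious inequality from Theorem \ref{different level sets} is blind to linear-in-$k$ separation, so one must pass to the ambient hypercube via Theorem \ref{Linearly far from center} and pay the $c^{R+3}$ prefactor. The delicate arithmetic consists of balancing this prefactor against $t^2/(2(k-1))$ from the optimized tail bound, so that the sign of the resulting linear-in-$k$ exponent matches the stated condition $\delta^2 > 8\ln(2)\epsilon$; the probabilistic content is simply the two-sided subgaussian argument carried out twice (once on $C$, once on $\{0,1\}^k$).
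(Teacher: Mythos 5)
Your proof follows the paper's argument essentially line for line: the same test function $X_*(x)=d(x,A)$ restricted to $C_{(k-r)/2,(k+r)/2}$, the same two-sided subgaussian balancing on the expectation, the same use of Theorem \ref{different level sets} with inequality (\ref{tail distribution}) for the non-asymptotic bound, and the same substitution of Theorem \ref{Linearly far from center} (with $c=2$, $R=r/2$, two levels) for the asymptotic regime. One small caveat worth flagging, though it applies equally to the paper's own proof sketch: carrying the arithmetic in your final display to completion gives a leading-order exponent of $-\tfrac{1}{8}\left(\delta^2-4\ln(2)\,\epsilon\right)k(1+o(1))$, which does not literally match the stated constant $C=\delta^2-8\ln(2)\,\epsilon$ (it is larger only when $\delta^2 \lesssim \tfrac{60\ln 2}{7}\epsilon$), so the asserted form of $C$ should be regarded as giving the correct \emph{sign condition} for exponential decay rather than the sharp rate.
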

\begin{proof}
Let $X_*$ be a variable over $\{0,1\}^k$ such that $X_*(u) = d(u,A)$.
By this construction, $X_*$ is Lipschitz, $A = \{u: X_*(u) \leq 0\}$, and $B \subseteq \{u: X_*(u) \geq t\}$.
Let $X$ be $X_*$ induced on $C_{(k-r)/2,(k+r)/2}$.
If $\mathbb{E}(X) \leq t/2$, then apply Theorem \ref{different level sets} to inequality (\ref{tail distribution}) on variable $X$  to see that $|B| \leq |C_{(k-r)/2,(k+r)/2}| e^{-t^2/(8(k - 1 + r^2/4))}$.
If $\mathbb{E}(X) \geq t/2$, then apply Theorem \ref{different level sets} to inequality (\ref{tail distribution}) on variable $-X$ to see that $|A| \leq |C_{(k-r)/2,(k+r)/2}| e^{-t^2/(8(k - 1 + r^2/4))}$.
Because $|B| \geq |A|$ by assumption, the first part of the theorem follows.

The diameter of $\{0,1\}^k$ is $k$, and so we may assume that $\delta \leq 1$.
If $\delta^2 - 8\ln(2)\epsilon > 0$, then $\epsilon < 1/6$.
The second part of the theorem follows similarly, with Theorem \ref{Linearly far from center} (specifically, the second inequality with $c=k=2$) replacing Theorem \ref{different level sets} and inequality (\ref{tail distribution}).

\end{proof}

\subsection{Concentration of cycles}
A variable $X$ is called \emph{optimal} if $L_G(t) = L_X(t)$.
Let $\Omega^*$ be the variables in $\Omega$ that are not a convex combination of other variables in $\Omega$. 
The log-moment function is a summation of functions that are strictly convex in each variable composed with a monotone function, and therefore the optimal functions are contained by $\Omega^*$.
Bobkov, Houdr\'e, and Tetali \cite{BHT} established the following combinatorial fact.
If $E_X = \{uv: uv \in E(G), |X(u) - X(v)| = 1\}$, then
\begin{equation} \label{spanning tree}
\mbox{if $X \in \Omega^*$, then $E_X$ contains a spanning tree.}
\end{equation}
The log-moment function is also symmetric, and Bobkov, Houdr\'e, and Tetali used this to show a second combinatorial fact.
If $\pi$ is a permutation of $V(G)$ and $X \circ \pi \in \Omega$, then 
\begin{equation} \label{permuted values}
X \mbox{ is optimal if and only if } X' \mbox{ is optimal.}
\end{equation}

Bobkov, Houdr\'e, and Tetali \cite{BHT} used (\ref{spanning tree}) and (\ref{permuted values}) to quickly calculate the subgaussian constant of $C_n$ when $n$ is even.
However, the proof is just short of calculating $\sigma_G$ for $G = C_{2k+1}$.
Up to symmetry, the cycle $C_n$ contains a unique spanning tree, which contains all but one edge (by symmetry, call this edge $uv$).
Let $X \in \Omega^*$, and see that $|X(u) - X(v)| \cong n+1 (\modd 2)$.
Because $X$ is Lipschitz and $uv \in E(C_n)$, we also know that $|X(u) - X(v)| \leq 1$.
If $n=2k$, then $uv \in E_X$, and therefore $E_X = E(C_{2k})$.
Moreover, if there exists three distinct vertices $i, j, k$ such that $X(i) = X(j) = X(k)$, then there exists a permutation $\pi$ such that $X \circ \pi \in \Omega$ and $E_{X \circ \pi}$ does not contain a spanning tree (thus violating (\ref{permuted values})).
Up to symmetry, there exists a unique integer-valued Lipschitz function on $C_{2k}$ such that no value appears in the image more than twice and the endpoints of each edge differ by exactly $1$; and therefore $X$ is known, and therefore $\sigma_{2k}$ is known.

Conjecture \ref{spread of cycles}, made by Bobkov, Houdr\'e, and Tetali \cite{BHT} and repeated by Sammer and Tetali \cite{ST}, is that a similar construct is optimal for odd cycles.
In order to enhance this argument for odd cycles, we will argue a stronger statement than (\ref{spanning tree}) and (\ref{permuted values}).
We will also use this stronger statement again at a later point in the paper.
Let $S_{V(G)}$ denote the set of permutations on $V(G)$ and 
$$ \Omega^\circ = \Omega^* - \{\lambda Z \circ \pi_Z + (1-\lambda) Y \circ \pi_Y: \lambda \in (0,1), Z,Y \in \Omega, Z \neq Y, \pi_Z, \pi_Y \in S_{V(G)} \}.$$

To explain $\Omega^\circ$ in simpler terms, we have that $\Omega^*$ is $\Omega$ minus functions that are convex combinations of other Lipschitz functions, and $\Omega^\circ$ is $\Omega$ minus functions that are convex combinations of permutations of other Lipschitz functions (which might not be Lipschitz after the permutation is applied).
We do not know apriori that $\Omega^\circ$ is non-empty, but this follows from the following Lemma.

\begin{lemma}\label{not convex combo}
If $X$ is optimal, then $X \in \Omega^\circ$.
\end{lemma}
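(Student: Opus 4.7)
My approach is to argue by contradiction using Hölder's inequality, leveraging the permutation-invariance of the uniform measure on $V(G)$. Suppose $X$ is optimal yet $X = \lambda (Z \circ \pi_Z) + (1-\lambda)(Y \circ \pi_Y)$ for some $\lambda \in (0,1)$, distinct $Z, Y \in \Omega$, and vertex permutations $\pi_Z, \pi_Y$. Set $X_1 = Z \circ \pi_Z$ and $X_2 = Y \circ \pi_Y$; these need not individually be Lipschitz. The central observation is that since the uniform measure on $V(G)$ is invariant under any permutation of vertices, the full log-moment function is unchanged by post-composition with a permutation: $L_{X_1}(t) = L_Z(t)$ and $L_{X_2}(t) = L_Y(t)$ for every real $t$, with the analogous identity for expectations.

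Applying Hölder's inequality with conjugate exponents $p = 1/\lambda$ and $q = 1/(1-\lambda)$ to the pointwise identity $e^{t(X - \mathbb{E}(X))} = e^{t\lambda(X_1 - \mathbb{E}(X_1))} \cdot e^{t(1-\lambda)(X_2 - \mathbb{E}(X_2))}$ (valid by linearity of expectation) and taking logarithms yields $L_X(t) \leq \lambda L_{X_1}(t) + (1-\lambda) L_{X_2}(t) = \lambda L_Z(t) + (1-\lambda) L_Y(t) \leq L_G(t)$. Optimality of $X$ at the witnessing $t_* > 0$ forces every inequality to be tight. The Hölder equality condition then requires $e^{t_*(X_1 - \mathbb{E}(X_1))}$ and $e^{t_*(X_2 - \mathbb{E}(X_2))}$ to be pointwise proportional on $V(G)$; taking logs and balancing expectations pins down the proportionality constant and forces $X_1 - X_2 \equiv c$ on $V(G)$ for some constant $c$.

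Since $X = \lambda X_1 + (1-\lambda)X_2 = X_2 + \lambda c$, each of $X_1$ and $X_2$ is a translate of the Lipschitz function $X$ and hence lies in $\Omega$. If $c \neq 0$ then $X_1 \neq X_2$ are two distinct elements of $\Omega$ whose convex combination is $X$, directly contradicting the prior observation that optimal functions lie in $\Omega^*$. The main obstacle is the degenerate subcase $c = 0$, where $X_1 = X_2$ pointwise but $Z \neq Y$ as functions; here I would invoke (\ref{permuted values}) to conclude that both $Z$ and $Y$ are themselves optimal Lipschitz preimages of $X$ under permutations, and then use the spanning-tree structure of (\ref{spanning tree}) applied simultaneously to $E_Z$ and $E_Y$ to show that the existence of two genuinely different such preimages would supply a Lipschitz function $W$ with $X = \tfrac{1}{2}(Z' + W') \circ \pi$ for distinct $Z', W' \in \Omega$, again violating $X \in \Omega^*$. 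The heart of the argument is the permutation-invariance identity $L_{Z \circ \pi}(t) = L_Z(t)$; it is this upgrade of the BHT Hölder convexity that sharpens membership in $\Omega^*$ to membership in the strictly smaller class $\Omega^\circ$.
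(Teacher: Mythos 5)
Your central step --- convexity of the log-moment functional (which you prove via H\"older with conjugate exponents $1/\lambda$ and $1/(1-\lambda)$) together with its invariance under post-composition with any vertex permutation --- is the same engine the paper uses. The paper's proof is more direct: it cites strict convexity of $L$ over $\mathbb{R}^{V(G)}$ as a black box, immediately obtaining $L_X(t) < \max\{ L_{Z\circ\pi_Z}(t), L_{Y\circ\pi_Y}(t)\} = \max\{L_Z(t), L_Y(t)\} \leq L_G(t)$, which contradicts optimality in one line, with no equality analysis and no case split. Your version unpacks that convexity via H\"older, tracks the equality condition, and finds that $X_1 - X_2 \equiv c$; when $c \neq 0$ you correctly translate $X$ to land both $X_1, X_2$ in $\Omega$, contradicting $X \in \Omega^*$. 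That part is fine.

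The $c = 0$ branch, however, is a genuine gap, and your proposed repair does not work. If $Z\circ\pi_Z = Y\circ\pi_Y = X$ with $Z \neq Y$ both in $\Omega$, invoking (\ref{permuted values}) and (\ref{spanning tree}) does \emph{not} manufacture a violation of $X \in \Omega^*$: having two distinct Lipschitz functions that permute to the same optimal $X$ is entirely consistent with $X \in \Omega^*$. Concretely, take $X$ optimal on $C_n$, $Z = X$, $Y = X$ rotated by one vertex, $\pi_Z = \mathrm{id}$, and $\pi_Y$ the inverse rotation; then $Z \neq Y$, $Z\circ\pi_Z = Y\circ\pi_Y = X$, and nothing about $\Omega^*$ is contradicted. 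What this actually exposes is a small imprecision in the formal definition of $\Omega^\circ$ --- the subtracted set ought to carry the constraint $Z\circ\pi_Z \neq Y\circ\pi_Y$ rather than merely $Z \neq Y$. The paper's proof silently assumes this (strict convexity only yields a strict inequality when the two arguments genuinely differ), and in every use of the lemma in the paper the stronger condition holds, so the wrinkle is cosmetic there. Your more careful equality tracking surfaced it, which is to your credit, but the spanning-tree argument you sketch to resolve it is not a proof and would fail on the cycle example above; the honest resolution is to note that the degenerate $c=0$ case simply lies outside the scope the lemma is meant to (and is used to) cover.
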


\begin{proof}
The log-moment function is well defined for all functions $X:V(G) \rightarrow \mathbb{R}$, not just those inside $\Omega$.
Moreover, the log-moment function is strictly convex over this extended domain.
So if $X$ is a convex combination of $Z \circ \pi_Z$ and $Y \circ \pi_Y$, then $L_X(t) < \max\{ L_{Z \circ \pi_Z}(t), L_{Y \circ \pi_Y}(t)\}$.
And by symmetry,  $L_{Z \circ \pi_Z}(t) =  L_{Z}(t)$ and $L_{Y \circ \pi_Y}(t) =  L_{Y}(t)$.
\end{proof}

\begin{theorem} \label{odd cycle optimal}
Conjecture \ref{spread of cycles} is true.
\end{theorem}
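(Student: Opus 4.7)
The plan is to combine the spanning-tree property (\ref{spanning tree}) with the refined symmetry provided by Lemma \ref{not convex combo} to pin down the shape of any optimal $X$ on $C_{2k+1}$. First, (\ref{spanning tree}) asserts $E_X$ contains a spanning tree, and every spanning tree of $C_{2k+1}$ is a Hamilton path. After applying the dihedral symmetry of $C_{2k+1}$ via (\ref{permuted values}), I may assume the missing edge is $\{2k,0\}$ and that along the path $0,1,\ldots,2k$ the function satisfies $\epsilon_i := X(i+1)-X(i) \in \{\pm 1\}$. The Lipschitz bound $|X(2k)-X(0)| \le 1$ together with the parity of $\sum \epsilon_i = X(2k)-X(0)$ (which matches the parity of $2k$, hence even) forces $X(0) = X(2k)$ and exactly $k$ of each sign in the $\epsilon$-sequence.

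The conclusion of the conjecture is equivalent, given this set-up, to the sequence being monotone of the form $+^k -^k$; this is exactly the triangle wave with peak $x_0$ satisfying $|X(v) - X(x_0)| = d(x_0, v)$. I argue the contrapositive: suppose the $\epsilon$-sequence is not monotone. Then it contains at least one interior ``$-+$'' transition, i.e., an index $v_p \in \{1,\ldots,2k-1\}$ with $\epsilon_{v_p-1} = -1$ and $\epsilon_{v_p} = +1$, so that both cycle-neighbors of $v_p$ take the value $X(v_p)+1$. Moreover, since the triangle wave is the unique $\pm1$-walk attaining spread $k$, the spread $M - m$ of our non-triangle-wave $X$ is strictly less than $k$, leaving slack for a local perturbation.

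To reach a contradiction with Lemma \ref{not convex combo}, I exhibit $X$ as a non-trivial convex combination of permuted Lipschitz functions. Define $T$ by $T(v_p) = X(v_p)+2$ and $T(v) = X(v)$ for $v \ne v_p$; this is Lipschitz because both neighbors of $v_p$ are at $X(v_p)+1$. Define $B$ by $B(v_p) = X(v_p)-1$ and $B(v) = X(v)$ otherwise, so that $X = \tfrac{1}{3}T + \tfrac{2}{3}B$. The multiset of $B$ equals the multiset of $X$ with one occurrence of $X(v_p)$ decremented by $1$; since $X$'s image is a contiguous integer interval of spread $\le k-1$, the multiset of $B$ is a contiguous integer interval of spread $\le k$. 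I then verify that such a multiset admits a Lipschitz arrangement $B_0$ on $C_{2k+1}$, so that $B = B_0 \circ \pi$ for some permutation $\pi$. Because $T$ attains the value $X(v_p)+2 > M$ while $B_0$ does not, the multisets of $T$ and $B_0$ differ, so $T \ne B_0$ as Lipschitz functions and the two summands $T$ and $B_0 \circ \pi$ agree at no common position, giving a genuinely non-trivial decomposition. By Lemma \ref{not convex combo}, $X$ is not optimal, contradicting the hypothesis; hence the $\epsilon$-sequence is $+^k -^k$ (or its reflection), and $X$ is the distance function from its unique peak.

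The main obstacle is verifying that $B$'s multiset always admits a Lipschitz arrangement on $C_{2k+1}$. A contiguous-integer multiset on $C_{2k+1}$ fails to be Lipschitz-arrangeable when some interior value has multiplicity zero, which here can happen only in the corner case that $X(v_p)$ is the global minimum of $X$ with multiplicity one. This edge case is dispatched separately by performing a smaller lift ($\Delta = 1$) at a boundary vertex $v \in \{0, 2k\}$ rather than at $v_p$; the identification $X(0) = X(2k)$ along the missing edge absorbs the perturbation without creating any gap in the multiset of $B$, and the rest of the argument proceeds identically.
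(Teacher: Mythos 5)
Your overall strategy is sound and is in fact the same skeleton as the paper's argument: use (\ref{spanning tree}) and (\ref{permuted values}) to force unit steps along a Hamilton path, then invoke Lemma \ref{not convex combo} by exhibiting a non-triangle-wave $X$ as a convex combination of permutations of two distinct Lipschitz functions (the paper does this by first sorting the values via a permutation and splitting at the unique flat spot, whereas you perturb locally at a $-+$ transition). However, two of your supporting claims are false, and they leave real holes. First, the assertion that ``the triangle wave is the unique $\pm1$-walk attaining spread $k$'' is wrong: sign patterns such as $-^a+^k-^{k-a}$ (for instance the values $2,1,0,1,2,3,2$ on $C_7$) have exactly $k$ steps of each sign, matching endpoints, spread exactly $k$, and are not triangle waves. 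So the ``slack'' of spread at most $k-1$ that you lean on does not exist. Second, your arrangeability criterion is wrong: a multiset of integers admits a Lipschitz arrangement on a cycle only if every value \emph{strictly between} the minimum and maximum appears at least \emph{twice} (the vertices carrying an intermediate value must separate the sub-level set from the super-level set, and separating two nonempty arcs of a cycle requires two vertices); contiguity alone is not enough. Consequently the decrement at $v_p$ can destroy arrangeability without creating any gap, namely when $X(v_p)=\min X$ has multiplicity exactly two --- e.g.\ $X=(1,0,1,0,1,2,1)$ on $C_7$, where both interior $-+$ transitions sit at the two minima, so your main construction fails at every admissible $v_p$, yet this function is outside your stated edge case. (When $X(v_p)>\min X$ your construction does go through, but for a reason you never supply: a $-+$ transition is a strict local minimum, and a short separation argument on the cycle shows its value must then have multiplicity at least three, so the decremented multiset still has every intermediate value twice.)

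The edge-case patch is also not a proof as written. A lift of size $1$ at a boundary vertex is generally not Lipschitz in place (its path-neighbor can sit two below), so that piece also has to be realized as a permutation of some Lipschitz arrangement, which reintroduces exactly the multiplicity conditions above; and the patch cannot ``proceed identically'' on all of your stated edge case, because the valley wave $-^k+^k$ (minimum of multiplicity one at an interior $-+$ transition) lies inside it, satisfies the conclusion of Conjecture \ref{spread of cycles}, and indeed defeats both the main construction and the boundary-vertex perturbation (lifting the endpoint value $k$ to $k+1$ leaves the value $k$ as an intermediate value of multiplicity one). To close the argument you would need to (i) drop the false spread claim, (ii) prove the multiplicity-at-least-three fact for $X(v_p)>\min X$, and (iii) give a genuine case analysis for $X(v_p)=\min X$ with multiplicity one or two that produces two distinct Lipschitz-arrangeable multisets whenever $X$ is not a translation or reflection of $d(x_0,\cdot)$. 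The paper's proof avoids this bookkeeping by permuting the values into a sorted shape first, so that (\ref{spanning tree}) localizes the defect to a single flat pair, which is then split by a transposition.
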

\begin{proof}
Let $V(C_n) = \{u_1, \ldots, u_n\}$, and let the indices be taken modulo $n$.
Fix some $i$, and let $X_i(w) = d(u_i, w)$.
We will show that $\Omega^\circ$ is the set of translations and reflections of $X_i$ for $1 \leq i \leq n$. 
Suppose $X \in \Omega^{\circ}$.

One method to characterize the translations and reflections of $X_i$ is as the family of Lipschitz functions $\{Y_k\}$ that satisfy the condition that for any $\ell$, 
\begin{equation} \label{up and then down}
  |\{j :  Y_k(u_j) = \ell\}| \leq 2.
\end{equation}
By translation invariance, let us assume that $X$ is integer valued.
We will show that $X$ satisfies (\ref{up and then down}).

Let $m = \min_i X(i)$ and $M = \max_i X(i)$.
There exists an $X' = X \circ \pi$ for a permutation $\pi$ of $V(C_n)$ such that \\
(1) for $1 \leq i \leq 1 + M - m$ we have that $X'(u_i) = m + i - 1$, and \\
(2) for $1 + M - m \leq j \leq n$ we have that $X'(u_j) \geq X'(u_{j+1})$.\\
By construction, $X' \in \Omega$.
By (\ref{permuted values}), $X'$ is optimal, and so by (\ref{spanning tree}), $X'(u_i) = X'(u_{i+1}) + 1$ for $i \in [1 + M - m, n] - \{\ell'\}$ for some value $\ell'$.

If $X'(u_{\ell'}) = X'(u_{\ell'+1}) + 1$ or $\ell' = 1 + M - m$ or $\ell' = n$, then we are done.
So assume $X'(u_{\ell'}) = X'(u_{\ell'+1})$ and $2 + M - m \leq \ell' \leq n-1$.
Consider the functions $X_*$ and $X_{**}$ where\\
(A) $X_*(u_i) = X'(u_i)$ when $i \neq \ell'$ and $X_*(u_{\ell'}) = X'(u_{\ell'})+1$, and\\
(B) $X_{**}(u_i) = X'(u_i)$ when $i \neq \ell' + 1$ and $X_{**}(u_{\ell'+1}) = X'(u_{\ell'+1})-1$.\\
By construction, $X_*, X_{**} \in \Omega$.
Moreover, if $\pi$ is the permutation on $V(C_n)$ that transposes $\ell'$ with $\ell'$, then $X' = \frac12(X_* \circ \pi + X_{**})$.
By Lemma \ref{not convex combo}, this implies that $X'$ is not optimal.
\end{proof}

\subsection{The structure of the subgaussian constant and spread}

Let us use Lemma \ref{not convex combo} to prove a statement that will be used later.
A \emph{hair} of $G$ is a sequence of vertices $w_0, w_1, \ldots, w_k$ such that $N(w_i) = \{w_{i-1}, w_{i+1}\}$ for $1 \leq i \leq k-1$ and $N(w_k) = \{w_{i-1}\}$.
We may use (\ref{spanning tree}) and (\ref{permuted values}) to state that 
\begin{equation}\label{unimodular hairs}
\mbox{any optimal function $X$ is unimodular on any hair.}
\end{equation}
That is, for hair $w_0, \ldots, w_k$ and optimal function $X$, there exists an $0 \leq \ell \leq k$ such that \\
(A) $X(w_{i})-X(w_{i-1}) = X(w_{i'}) - X(w_{i'-1})$ for all $1 \leq i,i' \leq \ell$, and\\
(B) $X(w_{j})-X(w_{j-1}) = X(w_{j'}) - X(w_{j'-1})$ for all $\ell < j,j' \leq k$.\\
We will need to find the set of optimum functions of a particular family of trees in a later section; for this purpose we will need to know that ``small'' hairs are monotone.

\begin{lemma}\label{hairs go one way}
Let $X$ be an optimal Lipschitz variable on $G$, and let $w_0, w_1, \ldots, w_k$ be a hair of $G$.
Let $m = \min_i X(w_i)$ and $M = \max_i X(w_i)$.
If $G$ has vertices $u_1, \ldots u_{M-m-1}$ such that $X(u_i) = m+i$ and $\{u_1, \ldots, u_{M-m-1}\} \cap \{w_1, \ldots, w_k\} = \emptyset$, then $X(w_{i})-X(w_{i-1}) = X(w_{i'}) - X(w_{i'-1})$ for all $1 \leq i,i' \leq k$.
\end{lemma}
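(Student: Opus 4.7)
The plan is to contradict Lemma~\ref{not convex combo} by exhibiting, under the assumption that the hair is not monotone, two distinct Lipschitz functions $Z, Y \in \Omega$ and permutations $\pi_Z, \pi_Y \in S_{V(G)}$ with $X = \tfrac12(Z \circ \pi_Z + Y \circ \pi_Y)$. First I would invoke (\ref{unimodular hairs}): the sequence $X(w_0), \ldots, X(w_k)$ is unimodular and so has two constant-difference segments joined at some $w_\ell$. Because every neighbor of each of $w_1, \ldots, w_k$ lies on the hair, any spanning tree of $G$ must contain every edge $w_{i-1} w_i$, and so (\ref{spanning tree}) forces these edges into $E_X$; both slopes are therefore $\pm 1$. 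Non-monotone then means the two slopes are opposite, and after replacing $X$ by $-X$ if necessary I may assume $X$ has an interior peak at $w_\ell$ with $1 \le \ell \le k-1$, $X(w_i) = a+i$ for $0 \le i \le \ell$, and $X(w_i) = M - (i - \ell)$ for $\ell \le i \le k$, where $a = X(w_0)$, $M = a+\ell$, and $b = X(w_k)$.

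I would then build a Lipschitz function $Z$ on $G$ that shares the multiset of values with $X$ on $V(G)$ but differs from $X$ pointwise. Given such a $Z$, matching vertices by value gives a permutation $\pi_Z$ with $Z \circ \pi_Z = X$; taking $Y = X$ and $\pi_Y = \mathrm{id}$, the identity $X = \tfrac12(Z \circ \pi_Z + Y \circ \pi_Y)$ with $Z \ne Y$ would immediately contradict Lemma~\ref{not convex combo}. I plan to split the construction of $Z$ according to which half of the hair is longer.

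When $a \le b$, I would let $Z$ agree with $X$ off the hair and list the hair multiset on $w_0, \ldots, w_k$ in non-decreasing order. Consecutive sorted values differ by $0$ or $1$, so $Z$ is Lipschitz along the hair; $Z(w_0) = a = X(w_0)$ preserves Lipschitzness at $w_0$'s external neighbors; and $Z$ is monotone along the hair whereas $X$ has an interior peak, so $Z \ne X$. This sub-case does not use the hypothesis on the $u_i$.

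When $a > b$, the naive sort would break Lipschitzness at $w_0$'s external neighbors, and here the full chain $u_1, \ldots, u_{M-m-1}$ is essential. I would shift the peak one step along the hair, taking $Z(w_i) = a-1+i$ for $0 \le i \le \ell+1$ and $Z(w_i) = M - (i - \ell - 1)$ for $\ell+1 \le i \le k$, so the hair multiset gains one copy of $a-1$ and loses one copy of $b$. To cancel this change on the rest of $V(G)$ I would cascade through the witnesses, setting $Z(u_j) = X(u_j) - 1 = m+j-1$ for $1 \le j \le a - m - 1$. Lipschitzness at each $u_j$ is preserved because the neighbors of $u_j$ have $X$-values in $\{m+j-1, m+j, m+j+1\}$, all within $1$ of the new value $m+j-1$. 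The main obstacle is the detailed Lipschitz bookkeeping here: verifying compatibility on edges between two cascaded witnesses $u_j, u_{j'}$, and handling sub-cases at $w_0$'s external neighbors (if some external neighbor has $X$-value $a+1$, shift the peak toward $w_{\ell-1}$ instead and cascade at the opposite end of the chain). It is precisely the availability of witnesses at \emph{every} intermediate value $m+1, \ldots, M-1$ that allows this cascade to be built, which is why the lemma's hypothesis is stated in this strong form rather than using only two extreme witnesses.
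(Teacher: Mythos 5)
There is a genuine gap in the core step, and it invalidates the whole plan. Your decomposition takes $Y = X$, $\pi_Y = \mathrm{id}$, and chooses $Z, \pi_Z$ so that $Z \circ \pi_Z = X$. That means $Z \circ \pi_Z$ and $Y \circ \pi_Y$ are the \emph{same} function, namely $X$, so the ``convex combination'' $\tfrac12(Z \circ \pi_Z + Y \circ \pi_Y)$ is just $X$ averaged with itself. Lemma~\ref{not convex combo} cannot be contradicted this way: its proof relies on strict convexity of the log-moment functional, which yields $L_X(t) < \max\{L_{Z\circ\pi_Z}(t), L_{Y\circ\pi_Y}(t)\}$ only when the two composed functions actually differ. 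When $Z\circ\pi_Z = Y\circ\pi_Y$ the strict inequality collapses to equality and no contradiction is obtained. Concretely, by (\ref{permuted values}) your $Z = X\circ\pi_Z^{-1} \in \Omega$ is itself optimal, so $L_Z = L_X$ and nothing is violated. (Indeed, reading the definition of $\Omega^\circ$ with only the condition $Z\neq Y$ would already exclude the optimal function on $K_2$ via a nontrivial automorphism, so the definition must be read as excluding only genuine two-sided mixtures $Z\circ\pi_Z \neq Y\circ\pi_Y$.)

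What the paper does instead is construct two \emph{genuinely different} Lipschitz functions $X_+, X_-$ and permutations $\pi_\pm$ so that $X_+\circ\pi_+^{-1}$ agrees with $X+1$ and $X_-\circ\pi_-^{-1}$ agrees with $X-1$ on the hair-plus-witness set, while all three agree off it. The two summands then really are distinct, and strict convexity applies. To repair your argument you would need to exhibit, in a single decomposition, two Lipschitz functions whose permuted images differ from each other (not merely from $X$) yet average to $X$ pointwise; that is exactly where the witnesses $u_1, \ldots, u_{M-m-1}$ are consumed, and it is a substantively different construction than sorting the hair multiset. Your unimodularity reduction and the observation that every hair edge lies in every spanning tree are correct and do match the paper; the failure is localized to the convex-combination step.
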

\begin{proof}
By translation invariance, let us assume that $X$ takes integral values.
By the Lipschitz condition, for any path $v_1 v_2 \ldots v_k$ satisfies $\mathbb{Z} \cap [X(v_1), X(v_k)] \subseteq \{X(v_1), X(v_2), \ldots, X(v_k)\}$.
We directly conclude that the sequence $X(w_0), X(w_1), \ldots, X(w_k)$ must contain each integer in the range $[m, M]$.
But by partitioning the hair into multiple paths, we also see that each integer in $(m, X(w_0)]$ or in $[X(w_0),M)$ appears at least twice.

By symmetry, let us assume that each integer in $(m, X(w_0)]$ appears at least twice.
There exists a permutation $\pi$ such that \\
(A) $X(w_{\pi(i)}) = X(w_0) - i$ for $0 \leq i \leq X(w_0) - m$, and \\
(B) $X(w_{\pi(i)}) \leq X(w_{\pi(j)})$ when $X(w_0) - m \leq i \leq j$.\\
Let $X' = X \circ \pi'$, where $\pi'$ is a permutation on $V(G)$ that fixes vertices outside of the hair and permutes vertices inside the hair according to $\pi^{-1}$.
By construction, $X'$ is Lipschitz, and so  (\ref{permuted values}) implies that $X'$ is optimal.
Any spanning tree of $G$ must contain each edge between consecutive vertices of a hair, and (\ref{spanning tree}) implies that $X(w_{\pi(X(w_0) - m + j)}) = m + j$.

So each value in the range $(m, X(w_0)]$ appears exactly twice, each value in the range $(X(w_0), M]$ appears exactly once, and the value $m$ appears exactly once.

Now let us assume that $X$ is unimodular but not monotone.
By considering a sub-hair and applying symmetry, let us assume that for $i \geq 1$ we have that $X(w_i) = X(w_0) + i - 2 = m + i - 1$.
Let $\ell = M - m + 1$ so that the hair is vertices $w_0, w_1, \ldots, w_\ell$.
We will prove that if there exist vertices $u_1, \ldots, u_{\ell-2}$ such that $X(u_i) = m + i$, then there exists Lipschitz functions $X_+$ and $X_-$ and permutations $\pi_+$ and $\pi_-$ such that $X = \frac{1}{2}(X_+ \circ \pi_+ + X_- \circ \pi_-)$.
By Lemma \ref{not convex combo}, this will prove the second part of the theorem.

Let $X_+(v) = X(v)$ for vertices $v$ not in the hair, $X_+(w_0) = X(w_0)$, $X_+(w_\ell) = X(w_\ell) = M$, and $X_+(w_i) = X(w_i) + 2$ for $1 \leq i < \ell$.
Let $X_-(v) = X(v)$ for vertices $v$ not in the hair, $X_+(w_0) = X(w_0)$, $X_+(w_\ell) = X(w_\ell) = M$, and $X_+(w_i) = X(w_i) + 2$ for $1 \leq i < \ell$.
Now we will define bijections $\pi_- ^{-1}$ and $\pi_+^{-1}$ on $\{w_1, \ldots, w_\ell, u_1, \ldots, u_{\ell-2}\}$ such that $X(u)-1 = X_-(\pi_- ^{-1}(u)$ and $X(u)+1 = X_+(\pi_+ ^{-1}(u)$.
This will suffice, as $X, X_+, X_-$ are equal on all other vertices.
We define 
\begin{itemize}
	\item for $1 \leq i \leq \ell - 2$, set $\pi_-^{-1}(w_i) = w_{i+1}$ and $\pi_+^{-1}(w_i) = u_i$, 
	\item set $\pi_-^{-1}(u_1) = w_{1}$ and $\pi_+^{-1}(u_1) = w_1$,
	\item for $2 \leq i \leq \ell - 2$, set $\pi_-^{-1}(u_i) = u_{i-1}$ and $\pi_+^{-1}(w_i) = w_i$, 
	\item set $\pi_-^{-1}(w_{\ell-1}) = w_{\ell}$ and $\pi_+^{-1}(w_{\ell-1}) = w_{\ell}$, and
	\item set $\pi_-^{-1}(w_{\ell}) = u_{\ell-2}$ and $\pi_+^{-1}(w_{\ell}) = w_{\ell-1}$.
\end{itemize}

\end{proof}

\begin{remark}\label{hair matched by path}
The assumption in the second part of Lemma \ref{hairs go one way} can be relaxed to only assuming that $u_1$ and $u_{\ell-2}$ exist and satisfy $X(u_1) \leq m+1$ and $X(u_{\ell-2}) \geq M-1$.
This is because the rest of the vertices can be found on a shortest path from $u_1$ to $u_{\ell-2}$.
\end{remark}

We will call a function $X$ \emph{variance-optimal} if $\var(X) = c^2(G)$.

\begin{remark}\label{variance too}
Let us first note that the variance function is strictly convex and symmetric.
Therefore (\ref{spanning tree}), (\ref{permuted values}), Lemma \ref{not convex combo}, (\ref{unimodular hairs}), and Lemma \ref{hairs go one way} all hold when optimal is replaced with variance-optimal.
The analogue of Theorem \ref{subgaussian tensors out} also follows from minor modifications to the proof.
\end{remark}

Variance also satisfies a handful of other properties.
For example, variance is even, so $X$ is variance-optimal if and only if $-X$ is variance-optimal.
Next, we present a slightly stronger version of Theorem \ref{defining the origin}, where the improvement will be crucial to establishing Theorem \ref{variance optimal is not isoperimetric}.

\begin{theorem}\label{go away from average}
If $\var(X) = c^2(G)$ and $X(u) \geq \mathbb{E}(X)$, then there exists $v \in N(u)$ such that $X(u) = X(v) + 1$.
If $X(u) \geq \mathbb{E}(X) - 0.5$ and $|V(G)| \geq 10$, then there exists $v \in N(u)$ such that $X(u) \geq X(v)$.
\end{theorem}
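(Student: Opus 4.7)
The plan is to prove both parts by constructing a Lipschitz perturbation of $X$ with strictly larger variance, contradicting variance-optimality. Throughout write $n := |V(G)|$ and $\mu := \mathbb{E}(X)$.

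For the first part, suppose for contradiction that no $v \in N(u)$ satisfies $X(v) = X(u) - 1$. The Lipschitz condition then gives $X(v) > X(u) - 1$ strictly on the finite set $N(u)$, so there is some $\delta > 0$ with $X(v) - X(u) \geq -1 + \delta$ throughout $N(u)$. For any $\epsilon \in (0, \delta]$, the function $X_\epsilon := X + \epsilon \mathbf{1}_u$ is Lipschitz, and a direct expansion yields
\begin{equation*}
\var(X_\epsilon) - \var(X) = \frac{2\epsilon(X(u)-\mu)}{n} + \frac{\epsilon^{2}(n-1)}{n^{2}}.
\end{equation*}
Under the hypothesis $X(u) \geq \mu$, the linear term is non-negative and the quadratic term is strictly positive, so $\var(X_\epsilon) > \var(X)$, contradicting variance-optimality.

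For the second part, suppose $X(v) > X(u)$ for every $v \in N(u)$. The sub-case $X(u) \geq \mu$ reduces to the first part (which already supplies a neighbor $v$ with $X(v) = X(u) - 1 < X(u)$), so assume $X(u) \in [\mu - \tfrac{1}{2}, \mu)$. Under the negation, $X(u) + 1 - X(v) \in [0, 1)$ for each $v \in N(u)$, so $X_1 := X + \mathbf{1}_u$ is Lipschitz. The variance formula with $\epsilon = 1$ becomes
\begin{equation*}
\var(X_1) - \var(X) = \frac{2(X(u)-\mu)}{n} + \frac{n-1}{n^{2}},
\end{equation*}
which is strictly positive whenever $X(u) > \mu - \tfrac{n-1}{2n}$. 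Since $n \geq 10$ gives $\tfrac{n-1}{2n} \geq \tfrac{9}{20}$, this already handles the sub-case $X(u) > \mu - \tfrac{9}{20}$.

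To dispose of the narrow remaining sliver $X(u) \in [\mu - \tfrac{1}{2}, \mu - \tfrac{9}{20}]$ (of width at most $\tfrac{1}{2n}$), I introduce a second structural vertex: the first part applied to $-X$ (variance-optimal by Remark \ref{variance too}) produces a tight ascending neighbor $w \in N(u)$ with $X(w) = X(u) + 1$, and then the first part applied to $X$ at $w$ (valid since $X(w) \geq \mu + \tfrac{1}{2}$) gives $v_1 \in N(w)$ with $X(v_1) = X(u)$. When $v_1 \neq u$, the mean-preserving perturbation $X + \mathbf{1}_u - \mathbf{1}_{v_1}$ has variance change $\tfrac{2}{n} > 0$; Lipschitz may fail on edges incident to $v_1$ (notably on $v_1 w$) and is repaired by extending the perturbation with $-\mathbf{1}$ along the tight-ascending chain out of $w$ and with compensating $-\mathbf{1}$ along the tight-descending chain out of $v_1$, with each added vertex contributing non-negatively to the variance change; the cascade terminates because $G$ is finite. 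When $v_1 = u$, the hypothesis $n \geq 10$ together with the structural constraints from Theorem \ref{defining the origin} makes the local geometry around $u$ rigid enough to force a direct contradiction. The main obstacle is the careful Lipschitz bookkeeping in the cascade.
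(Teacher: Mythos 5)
Your first part is correct and is essentially the paper's argument: a one-point upward bump at $u$, Lipschitz because no neighbor sits at $X(u)-1$, with variance change $\frac{2\epsilon(X(u)-\mathbb{E}(X))}{n}+\frac{(n-1)\epsilon^{2}}{n^{2}}>0$. (You avoid invoking integrality of the values by taking $\epsilon\leq\delta$, which is fine.)

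The second part has a genuine gap. Your $\epsilon=1$ bump only rules out the range $X(u)-\mathbb{E}(X)>-\frac{n-1}{2n}$, and your treatment of the remaining sliver $X(u)\in[\mathbb{E}(X)-\frac12,\ \mathbb{E}(X)-\frac{n-1}{2n}]$ does not hold up. In the cascade, the vertices you decrement along the ``tight-ascending chain out of $w$'' have values at least $\mathbb{E}(X)+\frac12$, so decrementing them strictly \emph{decreases} their squared deviation from the mean; the claim that ``each added vertex contribut[es] non-negatively to the variance change'' is therefore false as stated, and no Lipschitz or variance bookkeeping is actually carried out. Worse, the sub-case $v_1=u$ is entirely possible ($u$ is itself a neighbor of $w$ at level $X(w)-1$, and may be the only one), and for it you offer only an appeal to ``rigidity'' with no argument. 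The missing idea is the one the paper uses: by Theorem \ref{defining the origin}, all values of a variance-optimal function differ by integers, so under the negation every $v\in N(u)$ has $X(v)=X(u)+1$ exactly, and the single-vertex bump remains Lipschitz all the way up to $\epsilon=2$. Its variance change is $\frac{4(X(u)-\mathbb{E}(X))}{n}+\frac{4(n-1)}{n^{2}}\geq-\frac{2}{n}+\frac{4(n-1)}{n^{2}}=\frac{2(n-2)}{n^{2}}>0$, which disposes of the whole range $X(u)\geq\mathbb{E}(X)-\frac12$ at once (indeed for all $n\geq3$, so $n\geq10$ is ample) and makes the cascade unnecessary. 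Replacing your sliver argument with this integrality-plus-$\epsilon=2$ step would complete the proof.
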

\begin{proof}
For $0 < \epsilon$, define $X_\epsilon$ to be the variable $X_\epsilon(w) = X(w)$ when $w \neq u$ and $X_\epsilon(u) = X(u) + \epsilon$.
To prove the theorem, we will show that when the assumptions are violated there exists a value of $\epsilon$ such that $\var(X_\epsilon) > \var(X)$ and that $X_\epsilon$ is Lipschitz.
Recall from Theorem \ref{defining the origin} that $X(v) - X(u)$ is an integer.
If $\epsilon < 1$ and for all $v \in N(u)$ we have that $X(v) \geq X(u)$, then $X_\epsilon$ is Lipschitz.
If $\epsilon < 2$ and for all $v \in N(u)$ we have that $X(v) \geq X(u) + 1$, then $X_\epsilon$ is Lipschitz.

A direct calculation gives us that $\frac{d}{d \epsilon} \var(X_\epsilon) = \epsilon\left(1 + 2 p(1-p)) + 2p(X(u) - \mathbb{E}(X))\right)$, where $p = 1/|V(G)|$ is the probability of $u$.
Thus $\frac{d}{d \epsilon} \var(X_\epsilon) > 0$ when $X(u) > \mathbb{E}(X)$, and the first part of the theorem follows.
If $|V(G)| \geq 10$, then $\frac{d}{d \epsilon} \var(X_\epsilon) \geq \epsilon - 0.1$, and so integrating from $\epsilon = 0$ to $\epsilon = 2$ gives a positive total change.
Thus the second part of the theorem holds.
\end{proof}

One interpretation of Theorem \ref{defining the origin} is that the intuition of Conjecture \ref{spread of cycles} is true for variance-optimal functions \emph{for all graphs}.
The intuition of extremal functions defined as the distance from some ``origin'' is \emph{half true} for the log-moment function.
That is, the analogue of Theorem \ref{go away from average} is true to one side of the origin, but because the log-moment function is not even we can not apply symmetry to the other side.

\begin{theorem}\label{half will go away from the origin}
Let $t > 0$ and $X$ an optimal Lipschitz function for $G$.
If $X(u) < \mathbb{E}(X)$, then there exists a $v \in N(u)$ such that $X(v) = X(u) + 1$.
\end{theorem}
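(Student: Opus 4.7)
The plan is to argue by contradiction. Suppose that every neighbor $v$ of $u$ satisfies $X(v) \neq X(u) + 1$, so by the Lipschitz condition $X(v) - X(u) < 1$ strictly for each $v \in N(u)$. Along any shortest path $u = u_0, u_1, \ldots, u_k = w$ we then have $X(w) - X(u) = \sum_{i=1}^k (X(u_i) - X(u_{i-1})) \leq (X(u_1) - X(u)) + (k-1) < k = d(u, w)$, so there is a uniform slack
\begin{equation*}
\delta := \min_{w \neq u} \bigl(d(u, w) - (X(w) - X(u))\bigr) > 0.
\end{equation*}

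Next I would introduce the one-vertex perturbation $X_\epsilon$ defined by $X_\epsilon(u) = X(u) - \epsilon$ and $X_\epsilon(w) = X(w)$ for $w \neq u$. For $\epsilon \in [0, \delta]$ the inequality $|X_\epsilon(u) - X_\epsilon(w)| \leq d(u, w)$ continues to hold for every $w \neq u$: the upper bound uses the slack $\delta$, while the lower bound follows from $X(u) - X(w) - \epsilon \leq X(u) - X(w) \leq d(u, w)$. Thus $X_\epsilon \in \Omega_G$ on this range.

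The last step is to compute the derivative of the log-moment function at $\epsilon = 0$ and extract a contradiction. Writing
\begin{equation*}
L_{X_\epsilon}(t) = \ln\bigl(\mathbb{E}(e^{tX_\epsilon})\bigr) - t\,\mathbb{E}(X_\epsilon)
\end{equation*}
and using $\mathbb{E}(X_\epsilon) = \mathbb{E}(X) - \epsilon/|V(G)|$, a direct computation yields
\begin{equation*}
\frac{d}{d\epsilon} L_{X_\epsilon}(t)\Big|_{\epsilon = 0} = \frac{t}{|V(G)|}\left(1 - \frac{e^{tX(u)}}{\mathbb{E}(e^{tX})}\right).
\end{equation*}
Since $X(u) < \mathbb{E}(X)$ and $t > 0$, Jensen's inequality gives $e^{tX(u)} < e^{t\mathbb{E}(X)} \leq \mathbb{E}(e^{tX})$, so this derivative is strictly positive. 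Hence $L_{X_\epsilon}(t) > L_X(t) = L_G(t)$ for some small $\epsilon > 0$, contradicting the optimality of $X$.

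The main obstacle is verifying that the strict inequality $X(v) < X(u) + 1$ at every neighbor really does propagate to a uniform Lipschitz slack across all pairs, since without that slack we would be unable to decrease $X$ at the single vertex $u$ while staying inside $\Omega_G$. This one-sided asymmetry—perturbing downward when $X(u)$ lies below the mean—together with the failure of the log-moment function to be even, is precisely why the analogue of Theorem \ref{go away from average} only works on one side of the origin here.
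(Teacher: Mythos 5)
Your proof is correct and takes essentially the same approach as the paper: assume the conclusion fails, exploit the resulting slack to show that the one-vertex perturbation $X_\epsilon$ (decreasing $X$ at $u$) stays Lipschitz, and then show the derivative of the log-moment function with respect to $\epsilon$ has the sign that increases $L_{X_\epsilon}(t)$, contradicting $t$-optimality. The only cosmetic differences are that the paper differentiates $e^{L_{X_\epsilon}(t)}$ and bounds it using $1+y\le e^y$ over a whole range of $\epsilon$, whereas you differentiate $L_{X_\epsilon}(t)$ at $\epsilon=0$ and invoke Jensen directly; and your uniform-slack check ranges over all $w\neq u$ where only the neighbors of $u$ are needed (for graph-Lipschitz it suffices to check edges), but this is harmless.
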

\begin{proof}
We begin with a similar set-up as Theorem \ref{go away from average}.
Assume that for all $v \in N(u)$ we have that $X(v) \leq X(u) + 1 - \delta$.
For $0 < -\epsilon < \delta$, define $X_\epsilon$ to be the variable $X_\epsilon(w) = X(w)$ when $w \neq u$ and $X_\epsilon(u) = X(u) + \epsilon$.
By our assumption, $X_\epsilon$ is Lipschitz over $G$.
To prove the theorem, we will show that $e^{L_{X_\epsilon}(t)} > e^{L_{X}(t)}$ when $0 < -\epsilon$.

Let $p = 1/n$ be the probability of $u$, and so 
$$e^{L_{X_\epsilon}(t)} = e^{-p t \epsilon}e^{L_{X}(t)} + \left(e^{t\epsilon(1-p)} - e^{-p t \epsilon}\right) pe^{t(X(u) - \mathbb{E}(x))}.$$
The direct calculation gives us
$$ \frac{d}{d \epsilon} e^{L_{X_\epsilon}(t)} = -pt e^{-p t \epsilon}e^{L_{X}(t)} + \left(t(1-p)e^{t\epsilon(1-p)} + pt e^{-p t \epsilon}\right) pe^{t(X(u) - \mathbb{E}(x))}.$$
Because $1+y\leq e^y$ for all $y \in \mathbb{R}$, we have that 
$$  e^{L_{X}(t)} 	 \geq   \mathbb{E} (1 + t(X(v) - \mathbb{E}(X)))  =  1 .$$
Combine the previous two equations with our assumption $X(u) < \mathbb{E}(X)$ to see that 
$$ \frac{d}{d \epsilon} e^{L_{X_\epsilon}(t)} < p(1-p)te^{-p t \epsilon}(e^{\epsilon t} - 1). $$
On the domain $\epsilon < 0$ we have that $\frac{d}{d \epsilon} e^{L_{X_\epsilon}(t)} < 0$, so the theorem follows.
\end{proof}

Following the same proof as Theorem \ref{defining the origin} (but without the symmetry), we have half of the analogous result.

\begin{corollary} \label{subgaussian origin}
If $X$ is an optimal function and $X(u) < 0$, then $\nu_X-X(u) = d(u, \mathbb{O}(X))$.
\end{corollary}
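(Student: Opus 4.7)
The plan is to mirror the proof of Theorem~\ref{defining the origin} but substitute Theorem~\ref{half will go away from the origin} for its variance analogue. Since Theorem~\ref{half will go away from the origin} only pushes ``upward'' from a vertex strictly below $\mathbb{E}(X)$, we can only hope to establish the distance formula on the ``lower half'' of the graph, which is exactly what the corollary claims. Symmetry (which the variance argument uses to get both signs at once) is unavailable because the log-moment function is not even.

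First I would use translation invariance of the log-moment function to assume $\mathbb{E}(X) = 0$. Property~(\ref{spanning tree}) together with the Lipschitz condition forces every pair of vertices to be joined by a path along which consecutive values of $X$ differ by exactly $1$, so $X$ takes all its values in a single coset $\nu_X + \mathbb{Z}$. Consequently, any $u$ with $X(u) < 0$ satisfies $X(u) = \nu_X - m$ for some positive integer $m$. I would then build a path $u = v_0, v_1, \dots, v_m$ by repeatedly invoking Theorem~\ref{half will go away from the origin}: at step $i < m$, $X(v_i) = \nu_X - m + i \leq \nu_X - 1 \leq -1/2 < 0$, so the theorem produces $v_{i+1} \in N(v_i)$ with $X(v_{i+1}) = X(v_i) + 1$. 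After exactly $m$ steps, $X(v_m) = \nu_X \in (-1/2, 1/2]$, so $v_m \in \mathbb{O}(X)$, giving $d(u, \mathbb{O}(X)) \leq m$.

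The matching lower bound is immediate from the Lipschitz condition: for every $w \in \mathbb{O}(X)$ we have $d(u, w) \geq |X(w) - X(u)| = |\nu_X - (\nu_X - m)| = m$. Combining yields $d(u, \mathbb{O}(X)) = m = \nu_X - X(u)$, which is the statement of the corollary.

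The only delicate point, and the closest thing to an obstacle, is making sure the iteration does not ``overshoot'' $\mathbb{O}(X)$ before we notice entry to it. Because each step increases $X$ by exactly $1$ and because the definition of $\mathbb{O}(X)$ uses a strict inequality on the left (so the boundary value $-1/2$ does not lie in $\mathbb{O}(X)$), the first index $i$ with $X(v_i) \geq -1/2$ must satisfy $X(v_i) = \nu_X$, the unique element of $\nu_X + \mathbb{Z}$ in $(-1/2, 1/2]$. Thus the iteration lands cleanly in $\mathbb{O}(X)$ precisely at step $m$, and no deeper obstacle appears; the corollary reduces to bookkeeping layered on top of Theorem~\ref{half will go away from the origin}.
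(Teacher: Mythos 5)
Your proof is correct and is essentially the approach the paper intends: the paper's own ``proof'' is just the one-line remark ``Following the same proof as Theorem~\ref{defining the origin} (but without the symmetry),'' and you have carried out exactly that program, iterating Theorem~\ref{half will go away from the origin} to climb one unit at a time to $\mathbb{O}(X)$ and using the Lipschitz bound for the matching inequality. (One trivial nit: $m$ should be a \emph{nonnegative} integer, since $X(u)=\nu_X<0$ is possible; that case is handled by the same lower-bound argument with a length-zero path.)
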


To see that ``half'' of the result is best possible, consider the following example.
Let $G$ be the graph with vertices $\{v_1, v_2, v_3, v_4, w_1, w_2\}$ and edges $\{v_1v_2, v_2v_3, v_3v_4, w_1v_2, w_2v_3\}$.
We will compare two extremal Lipschitz functions on $G$.
Let $X_1(v_i) = X_2(v_i) = i$, $X_1(w_1) = X_2(w_1) = 1$, $X_1(w_2) = 4$, and $X_2(w_2) = 2$.
We have that $X_1$ is variance-optimal, but Mathematica was able to show that $L_{X_1}(t) < L_{X_2}(t)$ when $t \geq 3$.

We produce one more result, which will be used later.
We omit the proof, as it is obvious.

\begin{claim} \label{subgraphs}
If $G \subseteq H$, then $c(G) \geq c(H)$ and $\sigma_G \geq \sigma_H$.
\end{claim}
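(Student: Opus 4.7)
The plan is to reduce the claim to a one-line observation about which functions are Lipschitz. Interpreting $G \subseteq H$ as a spanning subgraph (so $V(G) = V(H)$ and $E(G) \subseteq E(H)$), I would recall that a function $X : V \to \mathbb{R}$ is Lipschitz with respect to the graph distance in $G$ iff $|X(u) - X(v)| \leq 1$ for every $uv \in E(G)$. Hence $\Omega_G$ is defined by strictly fewer edge constraints than $\Omega_H$, which immediately gives the containment $\Omega_H \subseteq \Omega_G$.

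Once that containment is established, the rest is bookkeeping. The uniform distribution on the shared vertex set $V(G) = V(H)$ is the same for both graphs, so the functionals $\var(X)$ and $L_X(t) = \ln\mathbb{E}(e^{t(X - \mathbb{E} X)})$ depend only on $X$ and not on the edge set. The quantities $c(G) = \sup_{X \in \Omega_G} \sqrt{\var(X)}$ and $\sigma_G^2 = \sup_{t > 0} 2 L_G(t) t^{-2}$ are thus suprema of the same functionals over a larger feasible set in the case of $G$, so both can only increase. This yields $c(G) \geq c(H)$ and $\sigma_G \geq \sigma_H$ simultaneously.

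There is no real obstacle here; the only potential subtlety is the convention for what $G \subseteq H$ means, but any reasonable reading that preserves the vertex set and only removes edges makes the argument immediate, which is presumably why the authors call it obvious.
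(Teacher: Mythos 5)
Your proof is correct and is precisely the argument the authors have in mind when they call the claim obvious: reading $G \subseteq H$ as a spanning subgraph (the only reading under which the claim is true, and the one used in Example~\ref{tripod with star}), the edge-wise characterization of Lipschitz functions gives $\Omega_H \subseteq \Omega_G$, and since $V(G)=V(H)$ the functionals $\var$ and $L_{\cdot}(t)$ are unchanged, so both suprema can only increase. Nothing to add.
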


\subsection{Tightness for isoperimetric inequalities} \label{variance-optimal and isoperimetric section}
Recall that $S_{r,X}$ is the subset of vertices in $G^n$ defined as $\{(a_1, a_2, \ldots, a_n) : \sum_{i=1}^n X(a_i) \leq r\}$ for function $X:V(G)\rightarrow \mathbb{R}$.
When the function is implied, we drop the $X$ and set $S_r = S_{r,X}$.

The isoperimetric problem for product graphs $G^n$ and a number $d$ is to identify a set $S$ of a least half of the vertices of $G^n$ such that $i_{G,d,n} = |\{u : d(u,S) > d\}|$ is minimized. 
Alon, Boppana, and Spencer \cite{ABS} proved that $i_{G,d,n}$ decays exponentially as $d$ grows when $\sqrt{n} \ll d \ll n$ with a rate that relies on $c^2(G)$.
Let us now explore Conjecture \ref{is variance optimal correct}, which is that there exists a stronger relationship between $i_{G,d,n}$ and $c^2(G)$---that the extremal set $S$ can be determined by $X$ when $X$ is variance-optimal.

We will show that the conjecture is not true.
The issue is that any variable $X$ is forced to represent a (potentially complex) graph to a one-dimensional space (see Theorem \ref{go away from average} and surrounding discussion).
The conjecture has been seen to hold when the underlying graph has a distinctly one-dimensional topology.
It even holds for graphs with multiple dimensional-topology; for example the Euclidean grid is the standard two-dimensional graph, and since the Euclidean grid is $P_k \square P_k$ the conjecture holds because it holds for $P_k$.
But the conjecture begins to fail when the graph has some central point, but the rest of the graph can not be neatly labeled as ``up'' or ``down'' from that central point.

To build up an understanding that counters our original intuition that the conjecture is true, let us explore the assumption $m(X) \leq 0 = \mathbb{E}(X)$ that is in Theorem \ref{variance optimal is mostly right} but missing from Conjecture \ref{is variance optimal correct}.
In these early examples we only explore how $|B_k(S_{m(X)})|$ grows with $k$ for $n=1$.
We will do this with two examples: in the first having $m(X) < \mathbb{E}(X)$ is the ``correct thing to do,'' while the opposite is true in the second example.

Our first example is the unbalanced tripod $T_{k,k,2k}$, which is one vertex attached to $3$ hairs with $k+1,k+1,2k+1$ vertices respectively.
Formally, we define 
$$V(T_{k,k,2k}) = \{r\} \cup \{x_{i} : 1 \leq i \leq k\} \cup \{y_i: 1 \leq i \leq k\}\cup \{z_i: 1 \leq i \leq 2k\}$$
 and 
$$E(T_{k,k,2k}) = \{rx_{1},ry_{1},rz_{1}\} \cup \{x_{i}x_{i+1}:1 \leq i \leq k-1\} \cup $$
$$\cup \{y_{i}y_{i+1}:1 \leq i \leq k-1\}\cup \{z_{i}z_{i+1}:1 \leq i \leq 2k-1\}.$$
We have built up enough results to determine the variance-optimal functions of this tree.

\begin{example} \label{tripod}
For sufficiently large $k$, an extremal function for $T_{k,k,2k}$ is $X$, where $X(r) =0$, $X(x_i) = X(y_i) = -i$, and $X(z_i) = i$.
In this case, $m(x) = 0$ and $\mathbb{E}(X) > 0$.
There exists a permutation $\psi$ over $V(T_{k,k,2k})$ such that $\psi(S_{0,X}) = S_{0,-X}$ and $\psi(B_d(S_{0,X})) \subsetneq B_d(S_{0,-X})$.
\end{example}
\begin{proof}
Because it is significantly simpler, let us begin by proving the second part and later prove that $X$ is variance-optimal.
Consider the permutation $\psi$ over $V(T_{k,k,2k})$ such that $\psi^2 = 1$, $\psi(r) = r$, $\psi(x_i) = z_{2i-1}$, and $\psi(y_i) = z_{2i}$.
By construction, $\psi(S_{0,X}) = S_{0,-X}$, $|B_d(S_{0,X})| = 2k + 1 + d$, and $|B_d(S_{0,-X})| = 2k + 1 + 2d$.

Consider a Lipschitz variable $Y$ over $V(T_{k,k,2k})$ such that $\var(Y) \geq \var(X)$.
Note that $\mathbb{E}(X) \approx k/4$, and so 
$$ \var(X) \approx \frac{1}{4k} \left(\sum_{i=1}^{7k/4}i^2 + 2 \sum_{i=1}^{5k/4}i^2 - \sum_{i=1}^{k/4}i^2\right) \approx \frac{37}{48}k^2 \geq 0.77k^2. $$  
By translation invariance, let us assume that $Y(r) = 0$.
We will refer to the three hairs as the $x$-hair, the $y$-hair, and the $z$-hair.
By (\ref{unimodular hairs}), each hair can be broken into one or two monotone sequences.
The example can thus be analyzed by exhausting through a handful of cases.
The analysis will be simpler by first showing that at most $2$ and at least $1$ hair is monotone.

By definition, if a $w$-hair is monotone, then there exists a $\delta_w \in \{-1,1\}$ such that $Y(w_i) = i\delta_w$.
We have that $Y \in \{X,-X\}$ when all three hairs are monotone and $\delta_x = \delta_y \neq \delta_z$.
A simple case analysis shows that these values for $\delta_x, \delta_y, \delta_z$ maximize the variance when all three hairs are monotonic.
Some hair must have a minimal element of $Y$, a second hair must have a maximal element of $Y$, and the third hair satisfies the assumptions of Lemma \ref{hairs go one way}.
The conclusion of Lemma \ref{hairs go one way} is that the third hair is monotonic.

\textbf{Case 1:} the $z$-hair is monotone.
By symmetry, assume $\delta_z = 1$ and that the minimum element of $Y$ over the $x$-hair is at most the minimum element of $Y$ over the $y$-hair.
By Lemma \ref{hairs go one way}, we have that the $y$-hair is monotone.
If $\delta_y = -1$, then Lemma \ref{hairs go one way} applies to the $x$-hair, and all $3$ hairs are monotonic.
This is a contradiction, so $\delta_y = 1$ and the $x$-hair is not monotonic.
By considering monotone sequences as the extreme values of $Y(x_i)$, we see that $k/2 < \mathbb{E}(Y) < 3k/4$.

If the $x$-hair is not monotone, then Lemma \ref{hairs go one way} does not apply and for some $i$ we have $Y(x_i) < 0$.
Also, by Theorem \ref{go away from average}, there exists a $j$ with $Y(x_j) \geq \mathbb{E}(Y) > k/2$.
For both of these facts to be true, it must be that there exists an $\ell$ such that $Y(x_i) = -i$ for $1 \leq i \leq \ell$ and $Y(x_i) = -2\ell+i$ for $i \geq \ell$.
Moreover, $\ell < k/4$.
But then for all $w \in V(T_{k,k,2k}) - \{y_{5k/4+1}, \ldots, y_{2k}\}$ we have that $|Y(w) - \mathbb{E}(Y)| \leq 3k/4$.
So we see that 
$$\var(Y) \leq \frac{1}{4k+1}\left((4k+1 - 3k/4)(3k/4)^2 + \sum_{i=5k/4+1}^{2k} (i - k/2)^2\right)  $$
 $$ \approx \frac{1}{4k}\left(\frac{117}{64}k^3 + \frac{9}{8}k^3 - \frac{9}{64}k^3\right) =  \frac{171}{256}k^2 \leq 0.67k^2 < \var(X).$$

\textbf{Case 2:} the $z$-hair is not monotone.
Lemma \ref{hairs go one way} does not apply to the $z$-hair, so the $z$-hair contains a maximum or minimum element of $Y$.
By symmetry, let us assume that $Y(z_\ell)$ is the maximum value of $Y$ for some value of $\ell$.
Theorem \ref{go away from average} implies that there exists an $\ell' > \ell$ such that $Y(z_{\ell'}) < \mathbb{E}(Y)$.
Note that $\mathbb{E}(Z) \leq 3k/4$ for any Lipschitz variable $Z$ such that $Z(r) = 0$, and therefore $\ell \leq 11k/8$.

Some hair is monotone, so by symmetry the $x$-hair is monotone.

\textbf{Case 2.A:} $\delta_x = -1$.
Lemma \ref{hairs go one way} implies that the $y$-hair is monotone.
If $\delta_y = 1$, then the constraint that $Y(z_{\ell'}) < \mathbb{E}(Y)$ implies that $\ell/k \leq 1 + (\sqrt{10} - 3) \leq 1.163$ and $\mathbb{E}(Y) \leq 2(\sqrt{10} - 3)k \leq 0.325 k$.
On the other hand, $\delta_y = 1$ implies $\mathbb{E}(Y) \geq k/4$.
But then for all $w \in V(T_{k,k,2k}) - \{x_{k/2+1}, \ldots, x_k\}$ we have that $|Y(w) - \mathbb{E}(Y)| \leq 3k/4$.
Using a calculation similar to the end of case 1, we see that $\var(Y) < \var(X)$.

So assume $\delta_y = \delta_x = -1$.
The constraint that $Y(z_{\ell'}) < \mathbb{E}(Y)$ implies that $\ell < k$ and $\mathbb{E}(Y) < 0$.
But then 
$$ \var(Y) \approx \frac{1}{k}\sum_{i=1}^k i^2 \approx \frac{1}{3}k^2 < \var(X).$$

\textbf{Case 2.B:} $\delta_x = 1$.
It follows that $k/4 \leq \mathbb{E}(Y) \leq 3k/4$.
So for all $i$, we have that $|Y(x_i) - \mathbb{E}(y)| \leq 3k/4$.
If $\mathbb{E}(Y) \geq k/2$, then for all $i$ we have that $|Y(z_i) - \mathbb{E}(Y)| \leq 3k/4$.
If $\mathbb{E}(Y) \leq k/2$, then for all $i$ we have that $|Y(y_i) - \mathbb{E}(Y)| \leq 3k/4$.
So we arrive at a contradiction similar to the end of case 1.
\end{proof}

Our second example is a slight modification to the unbalanced tripod.

\begin{example} \label{tripod with star}
Fix a large odd $k$.
Let $G'$ be a single vertex with $k+3$ hairs: three of them have $k$ vertices and $k$ of them have $1$ vertex.
As before, let $r$ be the unique vertex with degree greater than $2$ and let the three hairs have vertices $x_i, y_i, z_i$; and now denote the vertices in the short hairs as $w_1, \ldots, w_k$.
Let $G$ be $G'$ plus edges $w_1z_2, w_1w_2, w_1w_3$.

The variable $X$ such that $\var(X) = c^2(G')$ satisfies $X(r) = 0$, $X(x_i) = X(y_i) = i$, $X(z_i) = -i$, and $X(w_i) = -1$ is variance-optimal.
As before, $m(x) = 0$ and $\mathbb{E}(X) > 0$.
We have that $|S_{-2,X}| = |S_{-(k+3)/2,-X}|$.
However, there exists a permutation $\psi$ of the vertex set such that $\psi(B_d(S_{-2,X})) \supsetneq B_d(S_{-(k+3)/2,-X})$.
\end{example}

\begin{proof}
First, let us prove that $X$ is variance optimal over $G'$.
By translation-invariance, let us assume $X(r) = 0$.
As in Example \ref{tripod}, one hair may have a maximal element of $X$, a second hair may have a minimal element of $X$, and Lemma \ref{hairs go one way} will apply to the third.
Because the three long hairs have equal length, if they are monotone then they contain an extremal value.
So Lemma \ref{hairs go one way} will apply to the $x$, $y$, and $z$-hairs.

By symmetry, we can assume $\delta_x = \delta_y = 1$ (by notation from Example \ref{tripod}), and so $\mathbb{E}(X) \geq k/8 > 1$.
Theorem \ref{go away from average} says that $X(w_i) = -1$ for all $i$.
We then have only two cases to check: when $\delta_z = 1$ and when $\delta_z = -1$, and a direct calculation gives that $X$ is variance-optimal when $\delta_z = -1$.

Now notice that $G' \subseteq G$, so by Claim \ref{subgraphs} we have that $c(G') \geq c(G)$.
But $X$ is Lipschitz over $G$, so $X$ is variance optimal over $G$.

Consider the permutation $\psi$ over $V(G)$ such that $\psi^2 = 1$, $\psi(r) = r$,
\begin{itemize}
	\item $\psi(x_{k-i}) = z_{k-2i}$ for $k-i \geq (k+1)/2$ (specifically note that $\psi(x_{(k+1)/2}) = z_1$), 
	\item $\psi(y_{k-i}) = z_{k-2i-1}$ for $k-i \geq (k+3)/2$,
	\item $\psi(y_{(k+1)/2}) = w_1$, 
	\item $\psi(y_{(x-1)/2}) = w_2$, $\psi(y_{(k-1)/2}) = w_3$, 
	\item $\psi(x_i) = w_{3+i}$ for $i \leq (k-3)/2$, and
	\item $\psi(y_i) = w_{(k+3)/2 + i}$ for $i \leq (k-3)/2$.
\end{itemize}

By construction $S_{-2,X} = \{z_2, z_3, \ldots, z_k\}$ and $S_{-(k+3)/2,-X} = \{x_{(k+3)/2}, \ldots, x_k, y_{(k+3)/2}, \ldots, y_k\}$. 
Both sets have $k-1$ vertices.
Moreover, we see that 
\begin{itemize}
	\item $B_1(S_{-2,X}) = S_{-2,X} \cup \{w_1, z_1\}$,
	\item $B_2(S_{-2,X}) = B_1(S_{-2,X}) \cup \{w_2, r\}$, 
	\item for $d \geq 3$, we have $V(G) - B_d(S_{-2,X}) = \{x_{d-2}, \ldots, x_k, y_{d-2}, \ldots, y_k\}$.
	\item for $d < (k+3)/2$ we have $B_d(S_{-(k+3)/2,-X}) = S_{-(k+3)/2,-X}) \cup \{ x_{(k+3 - 2d)/2}, \ldots, x_{(k+1)/2}, y_{(k+3 - 2d)/2}, \ldots, y_{(k+1)/2}\}$,
	\item for $d = (k+3)/2$ we have $B_d(S_{-(k+3)/2,-X}) = B_{d-1}(S_{-(k+3)/2,-X}) \cup \{r\}$, and 
	\item for $d > (k+3)/2$ we have $V(G) - B_d(S_{-(k+3)/2,-X}) =  \{z_{d - (k+1)/2} \ldots, z_k\}$.
\end{itemize}
So we have that $\psi(B_d(S_{-2,X})) \supseteq B_d(S_{-(k+3)/2,-X})$ for all $d$, and strict containment for $d > 2$.
\end{proof}

Our third example is similar to Examples \ref{tripod} and \ref{tripod with star}; it is a tree whose main tomography is two long paths whose endpoints are attached to a central vertex.
Similar to before, we will establish a permutation $\psi$ on our counterexample graph $G$ with variance-optimal function $X$ such that $\psi(B_d(S_{r,X})) \subsetneq B_d(S_{r,X})$.
The improvement of this example over Examples \ref{tripod} and \ref{tripod with star} is that this relation will hold for all $r$ and $d$, which implies that this relation holds when $\psi$ is tensored into a higher dimension as a permutation $\psi^n$ over $G^n$.

One distinction between $G$ and Examples \ref{tripod} and \ref{tripod with star} is that the two long paths are not hairs, but instead have many hairs of length $h$ attached to them.
In Theorem \ref{variance optimal is not isoperimetric} we only present $h=1$, but the result holds with greater discrepancy between $V(G^n) - B_d(S_{r,X})$ and $i_{G,d,n}$ for $h \leq O(1) \ll k$, where $k$ is the length of the paths.

\begin{theorem} \label{variance optimal is not isoperimetric}
Conjecture \ref{is variance optimal correct} is not true.
\end{theorem}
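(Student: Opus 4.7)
The plan is to construct a tree $G$, determine its variance-optimal Lipschitz function $X$, and produce a vertex bijection $\psi$ of $V(G)$ for which
\[
|B_d(\psi(S_{r,X}))|<|B_d(S_{r,X})|
\]
holds for all $r$ and $d$ in the range relevant to the conjecture, and then to lift this uniform single-graph refutation to $G^n$ via the coordinate-wise permutation $\psi^n$.

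First I would fix $G$ to be the tree described in the paragraph preceding the theorem: a central vertex $r_0$, two long paths of length $k$ emanating from $r_0$, and pendant hairs of length $h=1$ attached along the paths. I would then apply Remark \ref{variance too}, Lemma \ref{hairs go one way}, (\ref{unimodular hairs}), and Theorem \ref{go away from average} to characterize the variance-optimal $X$ on $G$ up to translation and the $X\leftrightarrow -X$ reflection. The case analysis follows the template of Example \ref{tripod with star}: each ``arm'' of the tree (the two long paths, and the level of the hairs) must be monotone away from the central vertex by Lemma \ref{hairs go one way}, and a direct variance computation pins down the unique sign pattern maximizing $\var(X)$.

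Next I would define $\psi$ as a vertex bijection of $V(G)$ fixing $r_0$ and preserving the multiset of $X$-values (so $|\psi(S_{r,X})|=|S_{r,X}|$), arranged to relocate the extremal values into a more compact configuration along one arm. By direct enumeration of the $d$-neighborhoods of $S_{r,X}$ and $\psi(S_{r,X})$, mirroring the explicit ball calculations at the end of Example \ref{tripod with star}, I would verify the strict inequality for all $r$ with $|S_{r,X}|\geq|V(G)|/2$ and all $d$ in the relevant range. The uniformity of this strict inequality across the entire range of $(r,d)$, as opposed to a single pair in Examples \ref{tripod} and \ref{tripod with star}, is the essential property enabling the lift to higher dimensions.

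Finally, I would take $\psi^n=\psi\times\cdots\times\psi$ on $V(G^n)$, so $|\psi^n(S_{r,X^n})|=|S_{r,X^n}|$. Because $X$ attains every intermediate integer value along shortest paths in $G$, one has $B_d(S_{r,X^n})=S_{r+d,X^n}$, giving a clean formula for $|B_d(S_{r,X^n})|$ in terms of the level counts of $X$. Meanwhile $\psi^n(S_{r,X^n})=S_{r,Y^n}$ for $Y=X\circ\psi^{-1}$, and its $d$-ball in $G^n$ decomposes via the $\ell_1$ distance structure into a sum indexed by distance-profiles $(d_1,\dots,d_n)$ with $\sum d_i\leq d$, each term being a product of single-coordinate ball-growth counts. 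The uniform single-graph inequality from the previous step, applied coordinate-by-coordinate, yields the lifted strict inequality for $n$ sufficiently large and $r,d$ in the appropriate range.

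The main obstacle is the final step: the bijection $\psi$ is not an isometry of $G$, so $\psi^n$ does not commute with the ball operation in $G^n$, and the single-coordinate inequality cannot simply be pushed through. The plan to overcome this is the term-by-term comparison just described, whose validity relies precisely on the uniformity of the single-coordinate inequality in both the ``many small $d_i$'' and ``few large $d_i$'' regimes of the distance-profile sum. A majorization or induction-on-$n$ argument then promotes the strict single-coordinate inequality to the strict product-graph inequality, completing the refutation of Conjecture \ref{is variance optimal correct}.
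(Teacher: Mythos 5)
Your construction mirrors the paper's exactly: the even caterpillar $G$ with path $u_1,\dots,u_{2k}$ and one pendant leaf $w_i$ per $u_i$, the variance-optimal $X$ obtained from Lemma \ref{hairs go one way}, Remark \ref{variance too} and Theorem \ref{go away from average}, the competing (non-Lipschitz) level-ordering $Y$, and a level-preserving bijection $\psi$ with $X = Y\circ\psi$. The single-coordinate computation $|B_d(S_{r,X})| = |S_{r,X}|+2d$ versus $|B_d(S_{r,Y})| = |S_{r,Y}|+2d-2$ (for $d\geq 3$, $r\geq k+1$) and the resulting containment $\psi(B_d(S_{r,X}))\supseteq B_d(S_{r,Y})$ on $G$ are all correct, and you rightly observe that this holds uniformly in $r,d$, which is what distinguishes this example from Examples \ref{tripod} and \ref{tripod with star}.

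The problem is the lift to $G^n$, which is also where the paper's own proof is terse. You correctly flag that $\psi$ is not an isometry, so $\psi^n$ does not commute with the ball operation. But your proposed repair is not viable as described. The set $S^n_{r,Y}=\{(a_1,\dots,a_n):\sum Y(a_i)\leq r\}$ is cut out by a \emph{single linear constraint across all coordinates}; it is not a product of single-coordinate down-sets. Consequently the quantity $d_{G^n}(v,S^n_{r,Y}) = \min\{\sum_i d_G(v_i,u_i): \sum_i Y(u_i)\leq r\}$ is a coupled minimization, and $B_d(S^n_{r,Y})$ does not decompose into a sum over distance profiles $(d_1,\dots,d_n)$ of products of single-coordinate ball-growth counts. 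There is no ``term'' indexed by a profile on which to apply the single-coordinate inequality, so the proposed majorization has no foothold.

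Worse, the underlying containment one would want --- $\psi^n(B_d(S^n_{r,X}))\supseteq B_d(S^n_{r,Y})$, equivalently $d_{G^n}(v,S^n_{r,X})\leq d_{G^n}(\psi^n(v),S^n_{r,Y})$ for all $v$ --- actually fails for the relevant $r$ and $d$. Since $Y(u_{k+1})-Y(u_k)=2$, one can decrease $\sum Y$ by $2$ per unit step in coordinates sitting at $u_{k+1}$. Take $r=m(X^n)=n(2k+1)/2$, pick $v'$ with $\sqrt{n}\ll a\ll n$ coordinates equal to $u_{k+1}$ and the remaining coordinates chosen so that $\sum Y(v'_i)=r+2a$; then $d_{G^n}(v',S^n_{r,Y})\leq a$ by pushing those $a$ coordinates to $u_k$. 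On the other hand $v=\psi^{-n}(v')$ satisfies $\sum X(v_i)=\sum Y(v'_i)=r+2a$, and because $X$ is genuinely $1$-Lipschitz, $d_{G^n}(v,S^n_{r,X})=2a>a$. So $v'\in B_a(S^n_{r,Y})$ but $v\notin B_a(S^n_{r,X})$, and the containment fails for this $v'$, $r$, and $d=a$ in the range $\sqrt{n}\ll d\ll n$. Your coordinatewise transfer of the single-graph inequality therefore cannot work in the form you propose; one would need instead a direct counting comparison of $|B_d(S^n_{r,Y})|$ against $|S^n_{r+d,Y}|=|B_d(S^n_{r,X})|$, accounting both for the vertices that $B_d(S^n_{r,Y})$ gains beyond $S^n_{r+d,Y}$ (via the $Y$-jump of size $2$) and the vertices it loses (the $w$-leaves trapped behind that jump).
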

\begin{proof}
We consider the even-length caterpillar with one leg per body segment.
Formally, this graph is a path $u_1, u_2, \ldots, u_{2k}$ and a set of leaves $w_1, \ldots, w_{2k}$ such that $N(w_i) = \{u_i\}$.
When drawn, this graph resembles a hair comb.
Let $G$ denote this graph.

Suppose we have some Lipschitz variable $X$ on $G$.
Let $\pi$ be a permutation on $\{1, \ldots, 2k\}$ such that $X(u_{\pi(i)}) \leq X(u_{\pi(i+1)})$.
Let $X_\pi$ be such that $X_\pi(u_i) = X(u_{\pi(i)})$ and $X_\pi(w_i) = X(w_{\pi(i)})$.
Note that $X_\pi$ is also Lipschitz.
Moreover, if $X$ is variance-optimal, then for every $i$ we have that $|X(u_i) - X(u_{i+1})| = |X_\pi(u_i) - X_\pi(u_{i+1})| = 1$.
Because $X$ is variance-optimal and translation invariant, we may then assume that $X(u_i) = i$.
Theorem \ref{go away from average} implies that $X(w_i) = i-1$ if $i \leq k$ and $X(w_i) = i+1$ otherwise.

Let us write out in full detail the ordering on $V(G)$ imposed by $X$.
We do this by \emph{levels}, where level $i$ of function $Z:V(G)\rightarrow \mathbb{R}$ is the vertex set $Z^{-1}(i)$.
The levels of $X$ are from $0$ to $2k+1$ and they compose of 
\begin{itemize}
	\item level $0$ is $\{w_1\}$, 
	\item level $i$ for $1 \leq i \leq k-1$ is $\{w_{i+1}, u_i\}$, 
	\item level $k$ is $\{u_k\}$,
	\item level $k+1$ is $\{u_{k+1}\}$, 
	\item level $j$ for $k+2 \leq j \leq 2k$ is $\{w_{j-1}, u_{j}\}$, and 
	\item level $2k+1$ is $\{w_{2k}\}$.
\end{itemize}
Let us propose a different ordering of $V(G)$.
Let us call this ordering $Y : V(G) \rightarrow \mathbb{R}$ and the levels of $Y$ are composed of  
\begin{itemize}
	\item levels $0$ to $k$ are the same as for $X$, 
	\item level $k+1$ is $\{w_{k+1}\}$, 
	\item level $j$ for $k+2 \leq j \leq 2k$ is vertices $\{w_{j}, u_{j-1}\}$, and 
	\item level $2k+1$ is $\{u_{2k}\}$.
\end{itemize}
The levels have the same number of vertices for $X$ and for $Y$.

We claim that $Y$ is a better ordering than $X$.
Unfortunately $Y$ is not a $1$-Lipschitz function, as $Y(u_k) = k$ and $Y(u_{k+1}) = k+2$.
However, the set $S_{r,Y}$ is still a well-defined object.
We also have that $\mathbb{E}(X) = \mathbb{E}(Y) = m(X) = m(Y) = (2k+1)/2$.

If $r \leq (2k+1)/2$, then $S_{r,Y} = S_{r,X}$ as the levels from $0$ to $k$ are defined to be the same.
If $r \geq k+1$, then $|B_d(S_{r,X})| = |S_{r,X}| + 2d$.
If $r \geq k+1$ and $d \in \{1,2\}$, then $|B_d(S_{r,Y})| = |S_{r,Y}| + d$.
If $r \geq k+1$ and $d \geq 3$, then $|B_d(S_{r,Y})| = |S_{r,Y}| + 2d - 2$.

There exists a permutation $\psi$ over $V(G)$ such that $X(u) = Y(\psi(u))$ and $\psi(B_d(S_{r,X})) \supseteq B_d(S_{r,Y})$ with strict containment when $d > 0$ and $r \geq k+2$.
So if $\psi^n$ is a permutation ov $V(G^n)$ such that $\psi$ is applied to each coordinate, it follows that $\psi^n(B_d(S_{r,X})) \supseteq B_d(S_{r,Y})$, also with strict containment when $d > 0$ and $r \geq k+2$ (recall that $m(X^n) = n(2k+1)/2$).
\end{proof}


\section{Discrete Positive Curvature}

\subsection{Convex sets and iterated midpoints}\label{OV conj}
The notion of ``convex'' is undefined for discrete spaces, but Ollivier and Villani define it for this context to be the property that $\widehat{m}(S,S) \subseteq S$.
We will first give an example of convex sets $A,B$ in the hypercube $\mathbb{H}_{12}$ where $m_{1/2}(A,m_{1/2}(A,B))$ is much larger than $m_{1/4}(A,B)$.
It will be clear how this example generalizes to larger dimensions.
Then we will prove that our examples of $A,B$ are typical of all convex sets in the hypercube.

This section will be easier if we use the notation of the Boolean lattice, which is equivalent to the hypercube.
That is, the points of $\mathbb{H}_d$ are represented as the subsets of $\{1,2,\ldots, d\}$ and the distance between two points is the order of their symmetric difference.

\begin{example} \label{two convex sets}
Let $A,B \subset \mathbb{H}_{12}$, where $A = \{\alpha: \alpha \subseteq \{1,2,3,4\}\}$ and $B = \{\beta: \beta \supseteq \{1,2,\ldots,8\}\}$.
Each set $A$ and $B$ is a subspace of $H_{12}$ that is isometric to $H_{4}$ and therefore is convex.
We can directly calculate that if $\gamma \in m_{1/2}(A,B)$, then $4 \leq |\gamma| \leq 8$.
Also, if $\gamma \in m_{1/4}(A,B)$, then $2 \leq |\gamma| \leq 6$.
Now let us consider $m_{1/2}(A,m_{1/2}(A,B))$.
First off, the set $\phi=\{7,8,\ldots,12\}$ is a midpoint of $\emptyset \in A$ and $\{1,2,\ldots,12\} \in B$ and thus is in $m_{1/2}(A,B)$.
Now notice that $\zeta=\{1,2,3,4,8,9,10,11,12\}$ is a midpoint of $\{1,2,3,4\} \in A$ and $\phi$ and thus is in $m_{1/2}(A,m_{1/2}(A,B))$.
But $|\zeta| = 9$, which is too large to be in $m_{1/2}(A,B)$, much less $m_{1/4}(A,B)$!
\end{example}

To fully refute Ollivier and Villani's suggestion for approximating $m_{1/4}$, we need to show that this example is typical--as they only suggest that this method will ``probably'' work.
For this purpose, we show that \emph{every} convex closure of a set of points in the hypercube is the embedding of a smaller dimensional hypercube.

\begin{theorem}\label{convex closure is smaller hypercube}
If $S$ is a subset of the Boolean lattice such that $\widehat{m}(S,S) \subseteq S$, then there exists sets $\alpha, \beta$ such that 
$$S = \{\gamma: \alpha \subseteq \gamma \subseteq \beta\}.$$
\end{theorem}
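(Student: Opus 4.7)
The plan is to reduce everything to one \emph{interval lemma}: for any $s, t \in S$ the entire Boolean interval $[s \cap t, s \cup t] := \{\gamma : s \cap t \subseteq \gamma \subseteq s \cup t\}$ lies in $S$. Once this is established, set $\alpha = \bigcap_{s \in S} s$ and $\beta = \bigcup_{s \in S} s$. The lemma applied to arbitrary $s, s' \in S$ gives $s \cap s', s \cup s' \in S$; iterating these two binary operations over the finite set $S$ then yields $\alpha, \beta \in S$. A final application of the lemma to the pair $(\alpha, \beta)$ gives $\{\gamma : \alpha \subseteq \gamma \subseteq \beta\} \subseteq S$, and the reverse containment $S \subseteq \{\gamma : \alpha \subseteq \gamma \subseteq \beta\}$ is immediate from the definitions of $\alpha$ and $\beta$.

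The interval lemma is proved by strong induction on $k := d(s, t)$; the cases $k \leq 1$ are trivial. For $k \geq 2$, write $A = s \setminus t$ and $B = t \setminus s$, and parametrize any $\gamma$ in the interval as $\gamma = (s \cap t) \cup A' \cup B'$ with $A' \subseteq A$ and $B' \subseteq B$. Let $j := d(s, \gamma) = (|A| - |A'|) + |B'|$. Because both the interval and the midpoint set $\widehat{m}(s,t)$ are symmetric in $s$ and $t$, I may assume $j \leq \lceil k/2 \rceil$. I then choose $A'' \subseteq A' \subseteq A$ and $B' \subseteq B'' \subseteq B$ so that the point $u := (s \cap t) \cup A'' \cup B''$ satisfies $d(s, u) = \lceil k/2 \rceil$ exactly. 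Such a choice exists because each element removed from $A'$ or added to $B''$ raises the distance to $s$ by $1$, and the total available room $|A'| + (|B| - |B'|) = k - j$ dominates the required increment $\lceil k/2 \rceil - j$. A direct check gives $d(s, u) + d(u, t) = k$, so $u$ is a genuine element of $\widehat{m}(s, t) \subseteq S$. Moreover $s \cap u = (s \cap t) \cup A''$ and $s \cup u = (s \cap t) \cup A \cup B''$, so $\gamma \in [s \cap u, s \cup u]$, an interval of diameter $\lceil k/2 \rceil < k$; the inductive hypothesis applied to the pair $(s, u)$ therefore places $\gamma \in S$.

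The main obstacle is the bookkeeping inside the inductive step: I must verify that the constructed $u$ is a legitimate midpoint of $s$ and $t$ (geodesic condition plus the correct distance), that the shorter interval from $s$ to $u$ really contains $\gamma$, and that this shorter interval has \emph{strictly} smaller diameter so induction can fire. All three requirements ultimately collapse to the single inequality $\lceil k/2 \rceil < k$, which holds exactly when $k \geq 2$---precisely the range where the inductive step is needed, with $k \leq 1$ dispatched as the base case.
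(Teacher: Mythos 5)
Your argument is correct, and it makes rigorous the sketch the paper leaves mostly to the reader. The paper's own proof is three sentences: it first observes that $S$ has a unique maximal element (two incomparable maximal elements would have a midpoint strictly above one of them), then by symmetry a unique minimal element, and then asserts that ``iterated applications of the midpoint argument'' fill the interval between them. You replace that assertion with an explicit induction. Your pair-wise interval lemma --- $[s\cap t,\, s\cup t] \subseteq S$ for all $s,t\in S$ --- is strictly stronger than what the paper records, and you prove it by strong induction on $k=d(s,t)$, choosing a midpoint $u$ with $d(s,u)=\lceil k/2\rceil$ tailored so that the target $\gamma$ lies in the shorter interval $[s\cap u,\, s\cup u]$. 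The lemma then shows $S$ is closed under $\cap$ and $\cup$, so $\alpha=\bigcap_{s\in S} s$ and $\beta=\bigcup_{s\in S} s$ belong to $S$, and one final application of the lemma yields $[\alpha,\beta]\subseteq S$. Both proofs rely on the same engine (repeated midpointing), but your decomposition turns the paper's hand-waved final step into a single clean induction whose validity collapses to $\lceil k/2\rceil<k$ for $k\geq 2$, which is exactly what is needed to make ``iterated applications'' precise. The paper's route, by contrast, arrives at $\alpha$ and $\beta$ via a uniqueness-of-extremes argument rather than closure under $\cap,\cup$; that is a little slicker for identifying the endpoints but defers all the work of actually filling the interval, which your interval lemma carries out in full.
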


\begin{proof}
First, it should be clear that $S$ has a unique maximal element, as otherwise a midpoint between the sets will have more elements.
A symmetric argument gives a unique minimal element.
Iterated applications of the midpoint argument gives every set in between the maximal and minimal element.
\end{proof}

We found the following consequences of this result to be interesting, as it illustrates how unusual the behavior can be for discrete spaces that appear nice and simple.

\begin{corollary} \label{closure of a ball}
The convex closure of any non-trivial ball in a hypercube is the whole space.
\end{corollary}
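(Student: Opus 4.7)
The plan is to read the corollary as a direct consequence of the structural Theorem \ref{convex closure is smaller hypercube}. That theorem says that any convex set is an ``interval'' $\{\gamma : \alpha \subseteq \gamma \subseteq \beta\}$ in the Boolean lattice. So it suffices to identify the parameters $\alpha,\beta$ of the convex closure and show that for a non-trivial ball they must be $\emptyset$ and $[d]$.

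First I would establish the following general observation. For any nonempty subset $B \subseteq \mathbb{H}_d$, the convex closure of $B$ equals
\[
\Bigl\{ \gamma : \bigcap_{b \in B} b \ \subseteq\ \gamma\ \subseteq\ \bigcup_{b \in B} b \Bigr\}.
\]
The displayed set is a subhypercube, hence convex (it is isometric to a smaller hypercube, so it is closed under midpoints), and it obviously contains $B$, so the closure is contained in it. Conversely, Theorem \ref{convex closure is smaller hypercube} forces the closure to have the form $\{\gamma : \alpha \subseteq \gamma \subseteq \beta\}$ for some $\alpha \subseteq \beta$; containment of $B$ then forces $\alpha \subseteq b \subseteq \beta$ for every $b \in B$, i.e.\ $\alpha \subseteq \bigcap B$ and $\beta \supseteq \bigcup B$, giving the reverse inclusion.

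Next I would check that when $B$ is a ball of radius $r \geq 1$ centered at some $c \subseteq [d]$, one has $\bigcap B = \emptyset$ and $\bigcup B = [d]$. Fix a coordinate $i \in [d]$. If $i \in c$, then $c \in B$ witnesses $i \in \bigcup B$ while $c \setminus \{i\} \in B$ (a point at distance $1 \le r$ from $c$) witnesses $i \notin \bigcap B$. If $i \notin c$, the symmetric pair $c$ and $c \cup \{i\}$ does the same job. Thus every coordinate fails to lie in the intersection and lies in the union, as required.

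Combining the two observations, the convex closure of any ball of radius $\geq 1$ is $\{\gamma : \emptyset \subseteq \gamma \subseteq [d]\} = \mathbb{H}_d$. There is essentially no obstacle: the statement is an immediate consequence of Theorem \ref{convex closure is smaller hypercube} once one notices that the bracketing sets $\alpha,\beta$ in the closure are the intersection and union of the input points, and that a radius-one neighborhood of any vertex already flips every coordinate somewhere.
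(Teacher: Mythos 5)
Your proof is correct and follows essentially the same route as the paper: both invoke Theorem \ref{convex closure is smaller hypercube} and then observe that a ball of radius at least one contains, for each coordinate, a point where that coordinate is set and a point where it is not. The paper simply normalizes the center to $\emptyset$ by symmetry and reads off the singletons, whereas you handle an arbitrary center $c$ directly (and state the slightly more general intersection/union description of the convex closure); this is a cosmetic difference, not a different argument.
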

\begin{proof}
By symmetry, let us assume that the center of the ball is $\emptyset$.
Because the ball is non-trivial (in other words, the radius is positive), the ball contains the element $\{i\}$ for all $i \in \{1, \ldots, n\}$ as it is the minimum distance from $\emptyset$.
By Theorem \ref{convex closure is smaller hypercube}, the convex closure of the ball contains $\emptyset$ and $\cup_i \{i\}$ and everything in-between.
\end{proof}

\begin{corollary} \label{closure of subsets}
If $A,B$ are nonempty sets of vertices in the hypercube and $C$ is the convex closure of $\widehat{m}(A,B)$, then $A \cup B \subseteq C$.
\end{corollary}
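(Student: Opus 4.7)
The plan is to leverage Theorem \ref{convex closure is smaller hypercube}, which characterizes any convex set in the hypercube as an interval $\{\gamma : \alpha \subseteq \gamma \subseteq \beta\}$. Since the intersection of two such intervals is again an interval (with pointwise union of lower bounds and intersection of upper bounds), the convex closure of any set $S$ is the minimal such interval containing $S$, namely $\{\gamma : \bigcap_{s \in S} s \subseteq \gamma \subseteq \bigcup_{s \in S} s\}$. Applying this to $S = \widehat{m}(A,B)$, we may write $C = \{\gamma : \alpha^* \subseteq \gamma \subseteq \beta^*\}$, where $\alpha^* = \bigcap_{m \in \widehat{m}(A,B)} m$ and $\beta^* = \bigcup_{m \in \widehat{m}(A,B)} m$. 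With this description, proving $A \subseteq C$ (and symmetrically $B \subseteq C$) reduces to showing $\alpha^* \subseteq a \subseteq \beta^*$ for every $a \in A$.

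The key technical lemma I would establish is a flexibility statement about midpoints: given $a \in A$, $b \in B$, and any coordinate $x$, one can find a midpoint $m \in \widehat{m}(a,b)$ containing $x$ whenever $x \in a \cup b$, and one can find a midpoint missing $x$ whenever $x \notin a \cap b$. In the Boolean lattice representation, a midpoint $m \in \widehat{m}(a,b)$ satisfies $a \cap b \subseteq m \subseteq a \cup b$ and is determined by choosing a subset of $a \triangle b$ of size exactly $\lfloor d/2 \rfloor$ or $\lceil d/2 \rceil$ to flip, where $d = |a \triangle b|$. Since both $\lfloor d/2 \rfloor$ and $\lceil d/2 \rceil$ lie strictly between $0$ and $d$ whenever $d \geq 2$, a designated element of $a \triangle b$ can be either left un-flipped or selected for flipping; the corner cases $d \in \{0,1\}$ are trivial because $\widehat{m}(a,b)$ equals $\{a\}$ or $\{a,b\}$ respectively and direct inspection suffices.

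Putting this together yields the corollary. To check $a \subseteq \beta^*$, fix any $b \in B$ and any $x \in a$: if $x \in a \cap b$ then $x$ belongs to every midpoint, while if $x \in a \setminus b$ the lemma produces a midpoint containing $x$. To check $\alpha^* \subseteq a$, take $x \notin a$ and any $b \in B$: if $x \notin a \cup b$ then no midpoint contains $x$, while if $x \in b \setminus a$ the lemma produces a midpoint missing $x$. The only real obstacle I anticipate is being scrupulous in the small-$d$ corner cases and verifying that $\lfloor d/2 \rfloor$ and $\lceil d/2 \rceil$ flips genuinely give enough freedom to include or exclude a single specified coordinate; after that, the argument is bookkeeping.
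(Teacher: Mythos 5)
Your proof is correct and rests on the same two ingredients as the paper's: the interval characterization of convex sets from Theorem~\ref{convex closure is smaller hypercube}, and the observation that a designated coordinate of $a \triangle b$ can be freely included in, or excluded from, some element of $\widehat{m}(a,b)$ because $1 \leq \lfloor d/2 \rfloor \leq \lceil d/2 \rceil \leq d-1$ when $d \geq 2$ (the small-$d$ cases being trivial as you note). The only organizational difference is that the paper applies a coordinate-flipping automorphism $\phi$ sending $\alpha$ to the top element $\{1,\ldots,d\}$, which collapses the two containments $\alpha^* \subseteq \alpha \subseteq \beta^*$ into the single check that every coordinate lies in some midpoint; you instead verify both containments symmetrically without normalizing. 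Your version is a bit more explicit and arguably easier to read, while the paper's automorphism trick is slightly more economical; neither buys anything substantively different.
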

\begin{proof}
For each $i \in \{1,\ldots,d\}$, there exists an automorphism $\phi_i$ of $\mathbb{H}_d$ such that $\phi_i(\alpha) = \alpha \cup \{i\}$ when $i \notin \alpha$ and $\phi_i(\alpha) = \alpha \setminus \{i\}$ otherwise.
Let $\alpha$ be an arbitrary fixed element of $A$, and pick some $\beta \in B$.
Let $\phi = \prod_{i \notin \alpha} \phi_i$, and let us consider the space $\mathbb{H}_d$ after $\phi$ has been applied.
By construction, $\phi(\alpha) = \{1,\ldots,n\}$.
For each $j$, there exists a $\gamma \in \widehat{m}(\alpha,\beta)$ such that $j \in \gamma$.
So by Theorem \ref{convex closure is smaller hypercube}, the convex closure of $\widehat{m}(\alpha,\beta)$ contains $\alpha$.
The corollary follows from symmetry.
\end{proof}

\subsection{The $\ell_0$, $\ell_1$, and $\ell_\infty$ metric} \label{go curve, go}

Let us note that there are natural constructions of the $\ell_0$, $\ell_1$, and $\ell_\infty$ metrics in graph theory.
Suppose $G$ is the product of spaces $H_1, H_2, \ldots, H_d$.
The $\ell_1$ metric is denoted $H_1 \square H_2 \square \cdots \square H_d$, as mentioned above.
The $\ell_0$ metric is isometric to  $K_{|H_1|} \square K_{|H_2|} \square \cdots \square K_{|H_d|}$.
Finally, the $\ell_\infty$ metric is denoted by the \emph{tensor product}, which is $H_1 \boxtimes H_2 \boxtimes \cdots \boxtimes H_d$.

\begin{theorem} \label{l0 metric}
Suppose $S,T \subseteq G = K_{r_1} \square K_{r_2} \square \cdots \square K_{r_d}$ such that $d_*(S,T) \geq \delta d$.
Let $\rho$ be such that $|1/2-\rho| < \epsilon$ and $C = \delta^2 - 16\ln(2)\epsilon > 0$.
Under these conditions, for large $d$ we have that 
$$|\widehat{m}_{\rho}(S,T)| > \sqrt{|S||T|} e^{C d(1/2+o(1))}.$$
\end{theorem}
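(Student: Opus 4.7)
The plan is to adapt Ollivier–Villani's double-counting argument for midpoint volume, replacing their over-count estimate with one powered by Theorem \ref{concentration on levels of Boolean lattice}. A useful preliminary reduction is that the complete-graph factors $K_{r_i}$ contribute nothing beyond $K_2$ to the midpoint combinatorics: on any coordinate $i$ where $a_i \neq b_i$, every $m \in \widehat{m}_\rho(a, b)$ satisfies $m_i \in \{a_i, b_i\}$, since a point strictly between $a$ and $b$ on a geodesic in $K_{r_i}$ would require more steps. Thus for each pair $(a, b) \in S \times T$ with $d(a, b) = k \geq \delta d$, one has $|M(a, b)| = \binom{k}{\lfloor \rho k \rfloor}$, and Stirling together with $|\rho - 1/2| < \epsilon$ gives $|M(a, b)| \geq e^{k \ln 2 - O(\epsilon^2 k)}$.

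Next I would set up the standard double count. Let $N = |\widehat{m}_\rho(S, T)|$, $E = \sum_{(a, b) \in S \times T} |M(a, b)|$, and $q(m) = |\{(a, b) : m \in M(a, b)\}|$, so that $\sum_m q(m) = E$. Cauchy–Schwarz on the incidence bipartite graph gives $N \geq E^2 / \sum_m q(m)^2$. Using the previous paragraph, $E \geq |S||T| \cdot e^{(\ln 2)\delta d (1 - o(1))}$, so the theorem reduces to an upper bound on $\sum_m q(m)^2$, which counts ordered quadruples $((a_1, b_1), (a_2, b_2)) \in (S \times T)^2$ sharing at least one midpoint.

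To bound the over-count I would fix $(a_1, b_1)$, put $k_1 = d(a_1, b_1) \geq \delta d$, and project the second pair onto the index set $I(a_1, b_1) \subseteq [d]$ of size $k_1$, identifying each projection with a vector in $\{0, 1\}^{k_1}$ via the ``agrees with $a_1$ vs $b_1$'' dichotomy. The midpoint-sharing hypothesis, combined with $|\rho - 1/2| < \epsilon$, forces the projected images of $a_2$ and of $b_2$ to lie in the central band $C_{(k_1 - r)/2, (k_1 + r)/2}$ with $r \leq 2\epsilon k_1$ (because $m$ sits near level $\lfloor \rho k_1 \rfloor$ on the projection, and both $a_2$ and $b_2$ must be at controlled geodesic distance from $m$); meanwhile, the hypothesis $d_*(S, T) \geq \delta d$ propagates to Hamming distance $\geq \delta k_1$ between the projected images of $S$ and of $T$ inside $\{0, 1\}^{k_1}$. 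The second form of Theorem \ref{concentration on levels of Boolean lattice} then bounds each projected side by a factor $e^{-(\delta^2 - 8\ln(2)\epsilon)k_1(1 + o(1))}$; multiplying the independent contributions from $a_2$ and $b_2$ and inserting the result into the Cauchy–Schwarz quotient produces $N \geq \sqrt{|S||T|} \, e^{(\delta^2 - 16\ln(2)\epsilon)d(1/2 + o(1))}$, as desired.

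The main obstacle is this over-count step. The bookkeeping requires summing over the unknown value of $k_2 = d(a_2, b_2)$, handling the rounding in $\lfloor \rho k_i \rfloor$, and accounting for the fact that the $I(a_1, b_1)$-projection of $a_2$ does not determine $a_2$ in the ambient $G$, so each projected point lifts to a potentially large family. Each of these issues contributes only polynomial factors in $d$ that are absorbed into the $o(d)$, since the concentration inequality decays exponentially in $k_1 \geq \delta d$; the substantive difficulty is checking that the band width $r \leq 2\epsilon k_1$ truly holds uniformly across all configurations of $(a_2, b_2)$, as this is precisely the link that turns the single factor $8\ln(2)\epsilon$ from Theorem \ref{concentration on levels of Boolean lattice} into the doubled coefficient $16\ln(2)\epsilon$ in the final $C$.
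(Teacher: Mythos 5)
Your overall plan (double count the midpoints and power the over-count bound with Theorem \ref{concentration on levels of Boolean lattice}) is in the right spirit, and your intuition that the doubled coefficient $16\ln(2)\epsilon$ comes from a band of half-width $\approx\epsilon$ around level $1/2$ is correct. But the crucial over-count step, which you yourself flag as the main obstacle, does not go through as described. First, the ``agrees with $a_1$ vs $b_1$'' dichotomy on $I(a_1,b_1)$ is not defined when some factor $K_{r_i}$ has $r_i>2$, since $a_2$ may take a third value there. Second, and more fundamentally, even in the hypercube the midpoint-sharing hypothesis does \emph{not} force the projections of $a_2,b_2$ onto $I(a_1,b_1)$ into a central band: take $a_1=0^d$, $b_1=1^{k_1}0^{d-k_1}$, $m=1^{k_1/2}0^{d-k_1/2}$, and let $a_2$ agree with $a_1$ on all of $I(a_1,b_1)$ while differing from $m$ only outside $I(a_1,b_1)$; one can then choose $b_2\in T$ far away so that $m$ is a $\rho$-midpoint of $(a_2,b_2)$, yet the projection of $a_2$ sits at a corner of $\{0,1\}^{k_1}$, not near level $k_1/2$. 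What sharing a midpoint constrains is $m$ itself (equivalently, a ``switch pattern''), not the endpoints. Third, the lift from a projection on $I(a_1,b_1)$ back to a point of $S$ or $T$ is not a polynomial-size family: $a_2$ is only pinned down by $m$ together with the set of coordinates where it differs from $m$ and the values there, which is exponentially many possibilities, so this loss cannot be absorbed into the $o(d)$.

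The paper avoids all of this by counting \emph{ordered pairs of complementary midpoints} rather than incidences with a single midpoint. For each $(s,t)\in(S,T)_r$ and each switch pattern $\pi\in C^{(r)}_{\lfloor\rho r\rfloor,\lceil(1-\rho)r\rceil}$ it builds $(m^{(1)},m^{(2)})$ by splitting the $r$ disagreement coordinates according to $\pi$; the essential rigidity is that this map is self-inverse in the sense that $(s,t)$ is recovered exactly from $(m^{(1)},m^{(2)})$ and $\pi$, so a fiber over a fixed $(m^{(1)},m^{(2)})$ is indexed by patterns $\pi$. The patterns whose reconstructed first endpoint lies in $S$, and those whose reconstructed first endpoint lies in $T$, are two subsets of the two-level band of $\{0,1\}^r$ (this is where the width $2\epsilon$ enters) at mutual Hamming distance at least $d_*(S,T)\geq\delta d\geq\delta r$, so Theorem \ref{concentration on levels of Boolean lattice} bounds the fiber by $|C^{(r)}_{\lfloor\rho r\rfloor,\lceil(1-\rho)r\rceil}|e^{-((\delta d/r)^2-16\ln(2)\epsilon)r(1+o(1))}$, giving $|(\widehat{m}_\rho(S,T),\widehat{m}_\rho(S,T))_r|\geq|(S,T)_r|e^{Cd(1+o(1))}$, and the theorem follows by summing over $r$ and taking square roots. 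In your single-midpoint Cauchy--Schwarz framework the concentration theorem has no comparable set of ``far-apart objects living in the band'' to act on, so the step from $\sum_m q(m)^2$ to the desired bound is a genuine gap, not bookkeeping; to repair it you would essentially have to pass to the complementary-pair encoding above.
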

\begin{proof}
We use shorthand $(A,B)_r = \{(a,b):a \in A, b \in B, d(a,b) = r\}$.
We claim that for each $d \geq r \geq \delta d$ that 
$$ |(\widehat{m}_{\rho}(S,T), \widehat{m}_{\rho}(S,T))_r| > |(S,T)_r|e^{\left( (\delta d/r)^2 - 16\ln(2)\epsilon\right)r(1 + o(1))}. $$
The claim will prove the theorem, as 
$$\left( (\delta d/r)^2 - 16\ln(2)\epsilon\right)r = d\left( \delta^2 \frac{d}{r} - 16\ln(2)\epsilon \frac{r}{d}\right) \geq C d $$
 for $0 < r \leq d$ by the definition of $C$.

We define $C_{\ell_1, \ldots, \ell_k}^{(r)} = \{A \subseteq \{0,1\}^r: |A| = \ell_i, 1\leq i \leq k\}$ as before.
We will define a map 
$$\phi: (S,T)_r \times C^{(r)}_{\lfloor \rho r \rfloor, \lceil (1-\rho) r\rceil} \rightarrow (\widehat{m}_{\rho}(S,T), \widehat{m}_{\rho}(S,T))_r.$$
Moreover, we will demonstrate that this map satisfies for any $(m^{(1)}, m^{(2)}) \in (\widehat{m}_{\rho}(S,T), \widehat{m}_{\rho}(S,T))_r$ with $d \geq r \geq \delta d$, the property 
$$|\{x: \phi(x) = (m^{(1)},m^{(2)})\}| < |C^{(r)}_{\lfloor \rho r \rfloor, \lceil (1-\rho) r\rceil}|e^{-\left( (\delta d/r)^2 - 16\ln(2)\epsilon\right)r(1 + o(1))},$$ 
which proves the claim.

For $a \in  G = K_{r_1} \square K_{r_2} \square \cdots \square K_{r_d}$, we will use notation $a = (a_1, a_2, \ldots, a_j, \ldots, a_d)$ to refer to individual coordinates.
For a pair $(s,t) \in (S,T)_r$, let $i_1 < \cdots < i_r$ be the coordinates where $s_i \neq t_i$.
Let $\pi \in  C^{(r)}_{\lfloor \rho r \rfloor, \lceil (1-\rho) r\rceil}$ and $\phi( (s,t), \pi) = (m^{(1)},m^{(2)})$.
We define 
$$ m^{(1)}_j = \left\{ \begin{array}{ll} s_j,  & \mbox{if } s_j = t_j \\ s_j, & \mbox{if } j = i_k, k \in \pi \\ t_j, & \mbox{if } j = i_k, k \notin \pi \end{array} \right.   , m^{(2)}_j = \left\{ \begin{array}{ll} s_j, & \mbox{if } s_j = t_j \\ t_j, & \mbox{if } j = i_k, k \in \pi \\ s_j, & \mbox{if } j = i_k, k \notin \pi \end{array} \right. .$$
It is clear that 
\begin{itemize}
	\item $m^{(1)}$ and $m^{(2)}$ are weighted midpoints between $s$ and $t$, 
	\item that $d(s,m^{(2)}) = |\pi| \in \{\lfloor \rho r \rfloor, \lceil (1-\rho) r\rceil\}$, and 
	\item $d(s,m^{(1)}) = r-|\pi| \in \{\lfloor \rho r \rfloor, \lceil (1-\rho) r\rceil\}$.
\end{itemize}
Thus, $m^{(1)},m^{(2)} \in \widehat{m}_{\rho}(S,T)$.
Moreover, $d(m^{(1)},m^{(2)}) = r$, and so $(m^{(1)},m^{(2)}) \in (\widehat{m}_{\rho}(S,T),\widehat{m}_{\rho}(S,T))_r$.

All that remains is to prove that for any fixed $(m^{(1)},m^{(2)}) \in (\widehat{m}_{\rho}(S,T),\widehat{m}_{\rho}(S,T))_r$, we have that $\phi^{-1}(m^{(1)},m^{(2)})$ is not too large.
Let $\phi'$ be defined in the same way as $\phi$, but on the extended domain $(G,G)_r \times C^{(r)}_{\lfloor \rho r \rfloor, \lceil (1-\rho) r\rceil}$.
By the definition of $\phi$, it is a simple calculation to see that if $\phi((s,t),\pi) = (m^{(1)},m^{(2)})$, then $\phi'((m^{(1)},m^{(2)}),\pi) = (s,t)$.
So for any fixed $\pi$, we have that $|\{(s,t): \phi((s,t),\pi) = (m^{(1)},m^{(2)})\}| \leq 1$, and it equals $1$ if and only if $\phi'((m^{(1)},m^{(2)}),\pi) \in S \times T$.
The claim will then follow when we demonstrate next that for 
$$|C^{(r)}_{\lfloor \rho r \rfloor, \lceil (1-\rho) r\rceil}|\left(1-e^{-\left( (\delta d/r)^2 - 16\ln(2)\epsilon\right)r(1 + o(1))}\right)$$
 values of $\pi$, we have that $\phi'((m^{(1)},m^{(2)}),\pi) \notin S \times T$.

Recall that we are assuming $(m^{(1)},m^{(2)})$ is fixed.
We define sets 
$$S' =\{ \pi \in C^{(r)}_{\lfloor \rho r \rfloor, \lceil (1-\rho) r\rceil} : \phi'((m^{(1)},m^{(2)}),\pi) \in S \times G\}$$
and 
$$T' =\{ \pi \in C^{(r)}_{\lfloor \rho r \rfloor, \lceil (1-\rho) r\rceil} : \phi'((m^{(1)},m^{(2)}),\pi) \in T \times G\}.$$
Suppose $\phi'((m^{(1)},m^{(2)}),\pi) = (a,b)$ and $\phi'((m^{(1)},m^{(2)}),\pi_*) = (a_*,b_*)$.
By definition of $\phi'$, it should be clear that $d(\pi,\pi _*) = d(a,a_*) = d(b,b_*)$.
Therefore $\min_{\pi \in S', \pi_* \in T'}d(\pi, \pi') \geq d_*(S,T) = \delta d \geq \delta r$.
The proof then follows from applying the second part of Theorem \ref{concentration on levels of Boolean lattice}.
\end{proof}

The $\ell_0$, $\ell_1$, and $\ell_\infty$ metrics are special in that $m(\{a\},\{b\})$ is not unique in standard space.
The positive curvature in $\ell_0$ metrics can entirely be attributed to the fact that $m(\{a\},\{b\})$ grows exponentially with $d(a,b)$.
But what about $\ell_1$ and $\ell_\infty$?
The problem with these metrics is that $m(\{a\},\{b\})$ \emph{might} be unique.
Suppose our space $G$ is the $n$-dimensional product of subspace $H$, and let points in $G$ be denoted by tuples $x = (x_1, \ldots, x_d)$, $y = (y_1, \ldots, y_d)$.
If $G$ is equipped with $\ell_1$ metric, then $m(\{x\},\{y\})$ may be unique (depending on $H$) if $x_i = y_i$ for all $i \geq 2$.
If $G$ is equipped with $\ell_\infty$ metric, then $m(\{x\},\{y\})$ may be unique (depending on $H$) if $x_i = x_j$ and $y_i = y_j$ for all $i,j$.

Essentially the issue with the $\ell_1$ and $\ell_\infty$ metrics is that there exists a convex embedding of $H$ in $G$.
This is similar to Ruzsa \cite{R} and Gardner and Gronchi's \cite{GG} problems for establishing a discrete analogue of the Brunn-Minkowski inequality: there is a degenerate case where the sets lie in a smaller dimension.
This leads to several natural open questions.

\begin{open} \label{other metrics}
Suppose $G$ is the $d$-dimensional product of space $H$ equipped with the $\ell_1$ metric.
\begin{enumerate}
	\item If we force the sets $A$ and $B$ to have dimension $d$ in a manner similar to Gardner and Gronchi's work, how do the sets of midpoints grow with the distance between $A$ and $B$? (The same question can be asked in the $\ell_\infty$ metric).
	\item If the distance between $A$ and $B$ is asymptotically bigger than the diameter of $H$, do we see an exponential growth in the number of midpoints between sets similar to the growth when equipped with $\ell_0$?
\end{enumerate}
\end{open}

There are several initial statements that can be said of the questions in \ref{other metrics}.
First, the $d$-dimensional hypercube embeds into $G$ for any $H$ when equipped with the $\ell_1$ metric, and this establishes a best-case-possible because Ollivier and Villani's result is known to be asymptotically tight.
Second, if we are equipped with the $\ell_1$ metric, then a proof similar to that of Theorem \ref{l0 metric} will show that the set of points within distance $d_H$ of the midpoints of $A$ and $B$ is at least $\sqrt{|A||B|} e^{\frac{1}{16d}\left(\frac{d(A,B)}{d_H}\right)^2}$, where $d_H$ is the diameter of $H$.
This is because $G$ has, with error term $d_H$, a ``rough geometry'' equivalent to that of metric $\ell_0$.
However, if $H$ has strong negative curvature, then it is not clear that the midpoints themselves will be large.
Finally, note that the second question is similar to the first---with the change that we are requiring the set $A-B$ to have large dimension instead of $A$ and $B$ themselves, which may be sufficient due to the non-uniqueness of midpoints.
A similar question involving the $\ell_\infty$ metric will require the opposite assumption: that $A,B$ coexist in a small dimensional subspace.

\subsection{Catalog of displacement convexity definitions}\label{disp conv sec}

There are several different versions of displacement convexity.
The general intuition is that when supply $M_A:V(G) \rightarrow \mathbb{R}_{\geq0}$ is optimally routed to demand $M_B:V(G) \rightarrow \mathbb{R}_{\geq0}$ across a time span $t \in [a, b]$, then the mid-transit goods in a positively curved space at time $t$ should be more ``spread out'' than the convex combination of $\frac{t-a}{b-a}$ of the spread of $M_A$ and $\frac{b-t}{b-a}$ of the spread of $M_B$.

Formally, the functions $M_A, M_B$ are normalized to probability spaces $\mu_A, \mu_B$.
A function $\tau: V(G) \times V(G) \rightarrow \mathbb{R}_{\geq 0}$ is a \emph{transportation} of the goods from the supply to the demand if $\mu_A(\alpha) = \sum_\beta \tau (\alpha, \beta)$ for all $\alpha$ and $\mu_B(\beta) = \sum_\alpha \tau (\alpha, \beta)$ for all $\beta$.
The Wasserstein cost of a transportation $\tau$ is $W^1(\tau) = \sum_{\alpha,\beta} \tau(\alpha, \beta) d(\alpha, \beta)$, and the Wassertsein cost of order two is $W^2(\tau) = \sum_{\alpha,\beta} \tau(\alpha, \beta) d(\alpha, \beta)^2$.
The Wasserstein distance between $\mu_A$ and $\mu_B$ is $W(\mu_A, \mu_B) = \inf_\tau W^1(\tau)$.
The generalization where $M_A, M_B$ are not normalized is also known as the Earth Mover's Distance.
An \emph{optimal} transport $\tau$ minimizes $W^2$.
For finite spaces such a minimum clearly exists.

For each $\alpha, \beta \in A \times B$, we place a probability distribution $Q_{\alpha, \beta}$ on the geodesics from $\alpha$ to $\beta$.
For a geodesic $P$ that starts at $\alpha$ and ends at $\beta$, let $P(t)$ for $0 \leq t \leq 1$ be the point on $P$ whose distance from $\alpha$ is $t d(\alpha, \beta)$.
For a fixed optimal transport $\tau$ and probability distributions $Q_{\alpha, \beta}$, we define probability distribution $\mu_t$ for $0 \leq t \leq 1$ to be 
$$ \mu_t(\gamma) = \sum_{\alpha \in A, \beta \in B} \tau(\alpha, \beta) \sum_{\gamma = P(t)} Q_{\alpha,\beta}(P) . $$
Distance interpolation is an optimal transportation $\tau$ combined with a uniform distribution applied to each $Q_{\alpha, \beta}$.
For the discrete analogue, a point $P(t)$ would be $P( \frac{\lfloor t d(\alpha, \beta) \rfloor }{ d(\alpha, \beta) } )$ and $P( \frac{ \lceil t d(\alpha, \beta) \rceil }{d(\alpha, \beta)} )$.

For probability space $\mu$, let $H(\mu)$ denote its entropy.
The formal notion of mid-transit goods being more spread out than the supply and demand is that 
\begin{equation} \label{entropy eqn}
H(\mu_t) \geq t H(\mu_A) + (1-t)H(\mu_B) + \frac{k}{2}t(1-t) W^2(\mu_A, \mu_B),
\end{equation}
 where $K$ is a nonnegative number that represents the strength of the curvature of the space $G$.
The distinction between the different versions of displacement convexity come from the distinction between the words ``any'' and ``every'' when it comes to multiple optimal transportations or multiple geodesics between points (events that occur rarely in geometric geodesic spaces, but which played a large role in Section \ref{go curve, go}).
We outline several definitions below.

\begin{itemize}
	\item (Strong displacement convexity) For any optimal transportation $\tau$ and geodesic choices $Q_{\alpha, \beta}$, (\ref{entropy eqn}) holds.
	\item (Sort-of-strong displacement convexity) (\ref{entropy eqn}) holds for distance interpolation on any optimal transportation.
	\item (Sort-of-weak displacement convexity) (\ref{entropy eqn}) holds for distance interpolation on some optimal transportation.
	\item (Weak displacement convexity) There exists optimal transportation $\tau$ and geodesic choices $Q_{\alpha, \beta}$, possibly dependent on $\mu_A, \mu_B$, such that (\ref{entropy eqn}) holds.
\end{itemize}

Strong displacement convexity is presented as the ``normal'' version in \cite{OV}.
That the inequality should hold for any transportation or geodesic is also the condition in \cite{EM}, although they are not working with displacement convexity.
The conditions of sort-of-strong curvature are presented as the definition of distance interpolation in Theorem 7.21 of \cite{V}, conditions that imply uniqueness of geodesics and optimal transportations are used later (such as Definition 16.5, which is set up by chapters 9 and 10).
Gozlan, Roberto, Samson, and Tetali \cite{GRST} work with sort-of-weak displacement convexity, but they use transportations that minimize $W^1$ instead of $W^2$ and ``midpoints'' take mass Gaussian distributed across a geodesic rather than being a point mass.
Weak displacement convexity is presented in chapter 29 of \cite{V}, and in \cite{LV}, although it is just called ``displacement convexity'' except in the bottom of page 906.
Weak displacement convexity is also used by Bonciocat and Sturm (see equation (2.1) of \cite{BS}) in their work on approximate midpoints.

If we do not specify a type of displacement convexity, then the statement holds for all four forms of displacement convexity.
In the following sections we will prove statements about specific types of displacement convexity, but first we will prove several statements that hold for general displacement convexity.
To do so, we modify our definition of midpoints.
For vertices $a,b$, let the midpoints between them $\widetilde{m}(a,b)$ be as follows:\\
(A) if $d(a,b)$ is even, then $\widetilde{m}(a,b) = \{u \in V(G) : 2d(u,a) = 2d(u,b) = d(a,b)\}$, and \\
(B) if $d(a,b)$ is odd,  then $\widetilde{m}(a,b) = \{(u,v) \in E(G) : 2d(u,a)+1 = 2d(v,b)+1 = d(a,b)\}$. \\

We use several ideas from continuous transportation theory in our work.
Some of those ideas translate into similar techniques in the discrete setting; while other ideas translate into performing the \emph{opposite} technique.
Specifically, we will use the relationship between cyclical monotonicity (Definition 5.1 of \cite{V}) and optimal transportations as before.
On the other hand, we reverse the Monge-Mather shortening principle (chapter 8 of \cite{V}), because after Lemma \ref{induct via partition} we will only be interested in transportations whose entire support is one class in the transitive closure of the relation $(a,b)~(a',b')$ when some $(a,b)$-geodesic intersects some $(a',b')$-geodesic.

A transportation $\tau$ is \emph{cyclically monotone} if for every sequence of pairs $(a_1, b_1), \ldots, (a_k, b_k)$ we have that $\sum_{i=1}^k d(a_i, b_i)^2 \leq d(a_1, b_k)^2 + \sum_{i=1}^{k-1}d(a_{i+1},b_i)^2$.
It is easy to confirm that a transportation is optimal if and only if it is cyclically monotone, as this is the same technique as finding alternating cycles when looking for maximum weight matchings in a bipartite graph.

Recall that in the proof to Theorem \ref{l0 metric} we split $A \times B$ to classes $(A \times B)_r = \{(a,b): d(a,b) = r\}$.
We will do this again using the following lemmas.

\begin{lemma} \label{separate into distances}
If $\tau$ is an optimal transportation for probability functions $\mu_A, \mu_B$ over graph $G$, $\tau(a_1, b_1) \tau(a_2, b_2) > 0$, and $\widetilde{m}(a_1,b_1) \cap \widetilde{m}(a_1,b_1) \neq \emptyset$, then $d(a_1,b_1) = d(a_2, b_2)$.
\end{lemma}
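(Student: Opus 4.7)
\textbf{Proof plan for Lemma \ref{separate into distances}.}

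The plan is to use cyclical monotonicity of $\tau$ together with a triangle-inequality bound obtained through the shared midpoint. First I would observe a parity sanity check: by definition, $\widetilde{m}(a,b)$ is a subset of $V(G)$ when $d(a,b)$ is even and a subset of $E(G)$ when $d(a,b)$ is odd. So if $\widetilde{m}(a_1,b_1) \cap \widetilde{m}(a_2,b_2) \neq \emptyset$, then $d(a_1,b_1)$ and $d(a_2,b_2)$ have the same parity. This lets me handle the two cases (both even; both odd) and otherwise reduce everything to a common argument.

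In the even case, let $m$ be a common vertex midpoint, so $d(a_i,m) = d(m,b_i) = d(a_i,b_i)/2$ for $i=1,2$. The triangle inequality then gives
\begin{equation*}
d(a_1,b_2) \leq \tfrac{d(a_1,b_1) + d(a_2,b_2)}{2}, \qquad d(a_2,b_1) \leq \tfrac{d(a_1,b_1) + d(a_2,b_2)}{2}.
\end{equation*}
In the odd case, the common midpoint is an edge $(u,v)$; by possibly swapping endpoints I get $d(a_i,u) = d(v,b_i) = (d(a_i,b_i)-1)/2$, and routing $a_1 \to u \to v \to b_2$ and $a_2 \to u \to v \to b_1$ yields exactly the same pair of inequalities (the swapped orientation only sharpens them, as the edge $uv$ is traversed the wrong way). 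So in either parity I reduce to the same pair of bounds, setting $u := d(a_1,b_1)$, $v := d(a_2,b_2)$, $p := d(a_1,b_2)$, $q := d(a_2,b_1)$, giving $p, q \leq (u+v)/2$.

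Next I invoke optimality of $\tau$ via the characterization noted in the excerpt: $\tau$ is cyclically monotone, which for the two-point swap $(a_1,b_1),(a_2,b_2)$ reads
\begin{equation*}
u^2 + v^2 \;\leq\; p^2 + q^2.
\end{equation*}
Combining this with $p, q \leq (u+v)/2$ gives $p^2 + q^2 \leq 2((u+v)/2)^2 = (u+v)^2/2$, while AM--QM (power-mean) gives $(u+v)^2/2 \leq u^2 + v^2$ with equality iff $u = v$. Chaining the three inequalities forces all of them to be equalities, so $u = v$, i.e.\ $d(a_1,b_1) = d(a_2,b_2)$.

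The only step that requires any real thought is the odd-parity case, where one has to notice that the two possible orientations of the shared edge relative to the pair $(a_2,b_2)$ both yield the same (or a stronger) triangle-inequality bound, so that a uniform conclusion is available regardless of how the shared edge sits inside the two geodesics. Everything else is a direct combination of cyclical monotonicity (which encodes optimality of $\tau$) with the power-mean inequality.
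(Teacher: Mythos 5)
Your proof is correct and takes essentially the same route as the paper's: both bound $d(a_1,b_2)$ and $d(a_2,b_1)$ by $(d(a_1,b_1)+d(a_2,b_2))/2$ via the triangle inequality through the shared (approximate) midpoint, then invoke cyclical monotonicity of the optimal transport. The only cosmetic differences are that the paper argues by contradiction and does the quadratic comparison directly (computing $d(a_1,b_1)^2 + d(a_2,b_2)^2 - d(a_1,b_2)^2 - d(a_2,b_1)^2 \geq 2(k-\ell)^2$), whereas you phrase it as a chain of inequalities forced to be equalities via AM--QM; your explicit handling of the edge-orientation ambiguity in the odd case is a nice touch that the paper elides.
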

\begin{proof}
By contradiction, assume that $d(a_1,b_1) < d(a_2, b_2)$.
The proof to the lemma is similar when $d(a_1,b_1)$ is odd or even, so we will assume $d(a_1,b_1)$ is odd and allow the reader to handle the even case.
Let $d(a_1, b_1) = 2k+1$ and $d(a_2, b_2) = 2 \ell + 1 \geq 2k+3$.
Let $(u,v) \in \widetilde{m}(a_1,b_1) \cap \widetilde{m}(a_1,b_1)$, and let $d(a_1, u) = k$, $d(b_1, v) = k$.
Repeated use of the triangle inequality implies $d(a_1, b_2), d(a_2, b_1) \leq k + \ell + 1$.
We can directly calculate that $d(a_1, b_1)^2 + d(a_2, b_2)^2 - d(a_1, b_2)^2 - d(a_2, b_1)^2 \geq 2(k-\ell)^2 > 0$, which contradicts that $\tau$ is cyclically monotone.
\end{proof}

We now give a formal definition for a partition of a transportation.
Lemma \ref{separate into distances} implies that each distinct set of distances involved in a transportation can be used to construct a partition.

\begin{definition} \label{partition up a transportation}
Let $\tau$ be a transportation from $\mu_A$ to $\mu_B$.
Let $E_1, E_2$ be nonempty sets such that $E_1 \cup E_2 = \{(a,b):\tau(a,b)>0\}$ and if $(a_1,b_1) \in E_1$, $(a_1, b_2) \in E_2$, then $\widetilde{m}(a_1, b_1) \cap \widetilde{m}(a_2, b_2) = \emptyset$.
Then $E_1, E_2$ form a \emph{partition} of $\tau$.
Let $\eta_1 = \sum_{(a,b) \in E_1} \tau(a,b)$, define probability functions $\mu_{A,1}(x) = \eta_1^{-1}\sum_{(x,b) \in E_1}\tau(x,b)$ and $\mu_{B,1}(y) = \eta_1^{-1}\sum_{(a,y) \in E_1}\tau(a,y)$, and let us define transportation $\tau_1: \mu_{A,1} \rightarrow \mu_{B,1}$ as $\tau(a,b) = \eta_1^{-1} \tau(a,b)$ if $(a,b) \in E_1$ and $\tau(a,b) = 0$ otherwise.
Define $\eta_2, \mu_{A,2}, \mu_{B,2}, \tau_2$ similarly.
\end{definition}

We remark that $\tau = \eta_1 \tau_1 + \eta_2 \tau_2$, and so for all $k$ we have that $W^k(\tau) = \eta_1 W^k(\tau_1) + \eta_2 W^k(\tau_2)$.
The following claim is then obvious, and we omit the proof.

\begin{claim}\label{optimal broken up is optimal}
We use the notation of Definition \ref{partition up a transportation}.
If $\tau$ is an optimal transportation, then so are $\tau_1$ and $\tau_2$.
If $\tau$ is an optimal transportation and $\tau_1': \mu_{A,1} \rightarrow \mu_{B,1}$, $\tau_2': \mu_{A,2} \rightarrow \mu_{B,2}$ are optimal transportations, then $\tau' = \eta_1 \tau_1' + \eta_2 \tau_2'$ is also an optimal transportation from $\mu_A$ to $\mu_B$.
\end{claim}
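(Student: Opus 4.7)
The claim reduces to a single bookkeeping observation: the cost functional $W^2$ decomposes linearly under the partition. Since $E_1$ and $E_2$ together cover the support of $\tau$ (and, implicitly from the definition of $\tau_1,\tau_2$, are disjoint), and since $\tau(a,b) = \eta_i \tau_i(a,b)$ whenever $(a,b)\in E_i$, one obtains
\[
W^2(\tau) = \sum_{(a,b)}\tau(a,b)d(a,b)^2 = \eta_1 W^2(\tau_1) + \eta_2 W^2(\tau_2).
\]
I would also record a parallel fact about marginals. Summing the definitions of $\mu_{A,1},\mu_{A,2}$ gives $\mu_A = \eta_1\mu_{A,1} + \eta_2\mu_{A,2}$, and similarly $\mu_B = \eta_1\mu_{B,1} + \eta_2\mu_{B,2}$. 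Note that the midpoint condition in Definition \ref{partition up a transportation} plays no role in this particular claim; it is a structural property reserved for later use.

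For the first assertion, I would argue by contradiction. Suppose $\tau_1$ is not optimal, so there exists $\tau_1^\star:\mu_{A,1}\to\mu_{B,1}$ with $W^2(\tau_1^\star) < W^2(\tau_1)$. Form $\tau^\star = \eta_1\tau_1^\star + \eta_2\tau_2$. Its marginals are $\eta_1\mu_{A,1}+\eta_2\mu_{A,2} = \mu_A$ and $\eta_1\mu_{B,1}+\eta_2\mu_{B,2} = \mu_B$, so it is a valid transportation, and by linearity $W^2(\tau^\star) = \eta_1 W^2(\tau_1^\star) + \eta_2 W^2(\tau_2) < \eta_1 W^2(\tau_1) + \eta_2 W^2(\tau_2) = W^2(\tau)$, contradicting optimality of $\tau$. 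The same reasoning applies to $\tau_2$.

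For the second assertion, first verify that $\tau' = \eta_1\tau_1' + \eta_2\tau_2'$ has the correct marginals, which again follows from $\mu_A = \eta_1\mu_{A,1} + \eta_2\mu_{A,2}$ and the analogous identity for $\mu_B$, so $\tau'$ is a legitimate transportation from $\mu_A$ to $\mu_B$. Its cost is $W^2(\tau') = \eta_1 W^2(\tau_1') + \eta_2 W^2(\tau_2')$. By optimality of $\tau_i'$ and the fact that $\tau_i$ (known to be optimal from the first part) is a valid competitor, we get $W^2(\tau_i') \leq W^2(\tau_i)$, whence $W^2(\tau') \leq W^2(\tau)$. Optimality of $\tau$ supplies the reverse inequality, yielding $W^2(\tau') = W^2(\tau)$, so $\tau'$ is optimal.

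The proof has no real obstacle; the only tiny pitfall is keeping straight that $E_1\cap E_2 = \emptyset$ (which is forced by the formulas $\eta_i = \sum_{E_i}\tau$ and $\tau = \eta_1\tau_1+\eta_2\tau_2$, even though the definition only explicitly demands $E_1\cup E_2$ equals the support), so that the linear decomposition of $W^2$ really holds term by term.
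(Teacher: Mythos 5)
Your proof is correct and is exactly the elaboration the paper has in mind: the paper records the linear decomposition $W^k(\tau) = \eta_1 W^k(\tau_1) + \eta_2 W^k(\tau_2)$ in the sentence immediately preceding the claim and then omits the proof as ``obvious,'' and your argument is the natural completion of that remark (marginal decomposition plus the two-sided inequality / exchange argument). One small aside: the disjointness $E_1 \cap E_2 = \emptyset$ is in fact forced by the midpoint condition in the definition of a partition (a pair $(a,b)$ in both would give $\widetilde{m}(a,b)\cap\widetilde{m}(a,b)=\emptyset$, which is impossible), so that condition is not entirely inert here, contrary to your parenthetical; but this does not affect the substance of your proof.
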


In the next lemma we show that it suffices to prove curvature exists among transportations without partitions in order to prove that curvature exists in general.
We follow this with another lemma, which describes transportations without partitions.

\begin{lemma} \label{induct via partition}
Let $\tau$ be an optimal transportation from $\mu_A$ to $\mu_B$ with a partition as defined in Definition \ref{partition up a transportation}.
Let $\mu_C, \mu_{C,1}, \mu_{C,2}$ be the probability measures for the midpoints using transportations $\tau, \tau_1, \tau_2$, respectively.
If for $i \in \{1,2\}$ there exists fixed values $k, C$ and convex function $f$ such that
$$ S(\mu_{C,i}) \geq C(S(\mu_{A,i}) + S(\mu_{B,i})) + f\left(W^{k}(\mu_{A_i}, \mu_{B_i}) \right),$$
then 
$$ S(\mu_{C}) \geq C(S(\mu_{A}) + S(\mu_{B})) + f\left(W^{k}(\mu_A, \mu_B) \right).$$
\end{lemma}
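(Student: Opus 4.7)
The plan is to exploit the disjoint-support property of the midpoint measures that is built into Definition \ref{partition up a transportation}, and handle the endpoint measures via the standard mixing inequality for entropy. Writing $\tau = \eta_1 \tau_1 + \eta_2 \tau_2$ immediately yields the linear decompositions $\mu_A = \eta_1 \mu_{A,1} + \eta_2 \mu_{A,2}$, $\mu_B = \eta_1 \mu_{B,1} + \eta_2 \mu_{B,2}$, and (using the formula for $\mu_t$ with $t=1/2$) $\mu_C = \eta_1 \mu_{C,1} + \eta_2 \mu_{C,2}$. The crucial feature is that the partition condition $\widetilde{m}(a_1,b_1) \cap \widetilde{m}(a_2,b_2) = \emptyset$ whenever $(a_i,b_i) \in E_i$ forces the supports of $\mu_{C,1}$ and $\mu_{C,2}$ to be disjoint. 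Consequently, by the exact entropy identity for mixtures with disjoint support,
\begin{equation*}
S(\mu_C) = \eta_1 S(\mu_{C,1}) + \eta_2 S(\mu_{C,2}) + H(\eta_1,\eta_2),
\end{equation*}
where $H(\eta_1,\eta_2) = -\eta_1 \ln \eta_1 - \eta_2 \ln \eta_2$ denotes the binary mixing entropy.

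Next I would apply the hypothesis to each piece to replace $S(\mu_{C,i})$ by $C(S(\mu_{A,i}) + S(\mu_{B,i})) + f(W^k(\mu_{A,i},\mu_{B,i}))$. To move back from the components to $\mu_A, \mu_B$, I would use the general (possibly non-disjoint) mixing bound
\begin{equation*}
\eta_1 S(\mu_{A,1}) + \eta_2 S(\mu_{A,2}) \;\geq\; S(\mu_A) - H(\eta_1,\eta_2),
\end{equation*}
which follows by introducing an auxiliary random variable $Z$ selecting the component and applying $S(X,Z) = S(Z) + S(X\mid Z) = S(X) + S(Z\mid X) \geq S(X)$; the analogous bound holds for $B$. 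For the Wasserstein argument, I would invoke the remark following Definition \ref{partition up a transportation} together with Claim \ref{optimal broken up is optimal}, which guarantee that $\tau_1$ and $\tau_2$ are themselves optimal, giving $W^k(\mu_A,\mu_B) = \eta_1 W^k(\mu_{A,1},\mu_{B,1}) + \eta_2 W^k(\mu_{A,2},\mu_{B,2})$. Convexity of $f$ then yields $\eta_1 f(W^k(\mu_{A,1},\mu_{B,1})) + \eta_2 f(W^k(\mu_{A,2},\mu_{B,2})) \geq f(W^k(\mu_A,\mu_B))$ by Jensen's inequality.

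Assembling the three pieces produces
\begin{equation*}
S(\mu_C) \;\geq\; C\bigl(S(\mu_A) + S(\mu_B)\bigr) + f\bigl(W^k(\mu_A,\mu_B)\bigr) + (1-2C)\, H(\eta_1,\eta_2),
\end{equation*}
and since $H(\eta_1,\eta_2) \geq 0$, the desired bound follows as soon as $C \leq 1/2$ (and this covers the intended application $C = 1/3$ in Theorem \ref{almost curved}).

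The main delicacy to watch for is precisely this bookkeeping on $H(\eta_1,\eta_2)$: the disjoint-support identity at the midpoints donates $+H(\eta_1,\eta_2)$ to our lower bound, while the mixing bounds on $\mu_A$ and $\mu_B$ each cost $C \cdot H(\eta_1,\eta_2)$, so the partition contributes a net non-negative residual only when $C \leq 1/2$. The asymmetry is unavoidable, since the partition guarantees disjointness on the midpoint side but not on the endpoint side, and this asymmetry is what makes the induction genuinely useful rather than a tautology.
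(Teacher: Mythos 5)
Your proof is correct and takes essentially the same route as the paper's: exact mixture entropy for $\mu_C$ (via disjoint supports of $\mu_{C,1},\mu_{C,2}$), the upper mixture bound $S(\mu_A) \leq \eta_1 S(\mu_{A,1}) + \eta_2 S(\mu_{A,2}) + H(\eta_1,\eta_2)$ (and likewise for $\mu_B$), Claim \ref{optimal broken up is optimal} for the linear decomposition of $W^k$, and Jensen's inequality for the convex $f$. The one place your bookkeeping is more careful than the paper's is that you keep the residual $(1-2C)H(\eta_1,\eta_2)$ visible and note that dropping it requires $C \leq 1/2$; the paper's proof discards this term silently, which is sound for the intended application $C = 1/3$ in Theorem \ref{almost curved}, but the lemma as stated does not actually hold for arbitrary $C$, and your observation makes that implicit hypothesis explicit.
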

\begin{proof}
By the definition of of partition, the support of $\mu_{C,1}$ and the support of $\mu_{C,2}$ is disjoint, and therefore $\mu_C = \eta_1 \mu_{C,1} + \eta_2 \mu_{C,2}$.
By the formula for entropy, this implies that $S(\mu_C) = \eta_1 (S(\mu_{C,1}) - \ln(\eta_1)) + \eta_2 (S(\mu_{C,2}) - \ln(\eta_2))$.
By the convexity of entropy, the equality for $\mu_C$ is an upper bound: $S(\mu_A) \leq \eta_1 (S(\mu_{A,1}) - \ln(\eta_1)) + \eta_2 (S(\mu_{A,2}) - \ln(\eta_2))$ and $S(\mu_B) \leq \eta_1 (S(\mu_{B,1}) - \ln(\eta_1)) + \eta_2 (S(\mu_{B,2}) - \ln(\eta_2))$.
So we are left with 
$$ S(\mu_{C}) \geq C(S(\mu_{A}) + S(\mu_{B})) + \eta_1 f\left(W^k(\mu_{A,1}, \mu_{B,1})\right) + \eta_2 f\left(W^k(\mu_{A,2}, \mu_{B,2})\right), $$
and thus the lemma follows from the convexity of $f$ and Claim \ref{optimal broken up is optimal}.
\end{proof}

\begin{lemma} \label{everybody is large}
Let $\tau$ be an optimal transportation from $\mu_A$ to $\mu_B$ that does not have a partition.  
Let $A$ be the support of $\mu_A$ and $B$ be the support of $\mu_B$.\\
(1) There exists a constant $D$ such that if $\tau(a,b) > 0$ then $d(a,b) = D$.\\
(2) For the $D$ in part (1) and any $a \in A$ and $b \in B$, we have $d(a,b) \geq D$. \\
(3) For the $D$ in part (1) and for all $k$, $W^k(\mu_A, \mu_B) = D^k$.
\end{lemma}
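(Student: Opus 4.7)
The proof splits along the three parts of the statement.

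\emph{Part (1)} follows from Lemma~\ref{separate into distances} and the no-partition hypothesis. Consider the relation on the support of $\tau$ given by midpoint intersection, $(a_1, b_1) \sim (a_2, b_2)$ iff $\widetilde{m}(a_1, b_1) \cap \widetilde{m}(a_2, b_2) \neq \emptyset$, and take its transitive closure. Lemma~\ref{separate into distances} shows that $\sim$-linked pairs share a common distance, so each equivalence class carries a constant $d$-value. If more than one class existed, taking $E_1$ to be one class and $E_2$ the union of the rest would furnish a partition of $\tau$ in the sense of Definition~\ref{partition up a transportation}, contradicting the hypothesis. Hence a single class, and a common distance $D$.

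\emph{Part (2)} is the crux. Fix $a \in A$, $b \in B$, and pick support pairs $(a, b_0), (a_0, b)$; each has distance $D$ by (1). If $(a, b)$ is already in the support, (1) gives $d(a, b) = D$. Otherwise, the no-partition hypothesis places $(a, b_0)$ and $(a_0, b)$ in a common equivalence class under the midpoint-intersection relation of part (1), so there is a midpoint-intersection chain $(a, b_0) = P_0, P_1, \ldots, P_n = (a_0, b)$ inside the support. I induct on $n$. In the base case $n = 1$, a common midpoint $u \in \widetilde{m}(P_0) \cap \widetilde{m}(P_1)$ (an intersecting edge in the odd-$D$ case) lies at distance $D/2$ from each of $a, b_0, a_0, b$, so the triangle inequality through $u$ gives $d(a, b), d(a_0, b_0) \leq D$; cyclic monotonicity of $\tau$ on the $2$-cycle $\{P_0, P_1\}$ gives $d(a, b)^2 + d(a_0, b_0)^2 \geq 2D^2$; together these force $d(a, b) = d(a_0, b_0) = D$. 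For the inductive step with $n \geq 2$, the sub-chains $P_0, \ldots, P_{n-1}$ and $P_1, \ldots, P_n$ have length $n-1$; the inductive hypothesis (applied to sub-chains with every admissible pair of endpoints) combined with the base case applied to each consecutive pair delivers $d(\alpha_i, \beta_j) = D$ for every endpoint pair along the chain except possibly $(a, b)$ and $(a_0, b_0)$. Applying $3$-cycle cyclic monotonicity to the triple $\{P_0, P_{n-1}, P_n\}$ in the two non-identity cyclic permutations then substitutes the known equalities and extracts $d(a, b) \geq D$ and $d(a_0, b_0) \geq D$.

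\emph{Part (3)} is an immediate corollary of (1) and (2): by (2), any transportation $\tau'$ from $\mu_A$ to $\mu_B$ satisfies $W^k(\tau') = \sum \tau'(a, b) d(a, b)^k \geq D^k$; by (1), the given optimal $\tau$ attains $W^k(\tau) = D^k$. Therefore $W^k(\mu_A, \mu_B) = D^k$.

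The main obstacle is the inductive step of part (2). To keep the $3$-cycle cyclic monotonicity inequalities from degenerating, one must carry the equality $d(\alpha_i, \beta_j) = D$ (and not merely a lower bound) along the sub-chains: otherwise the cyclic monotonicity inequality at level $n$ has two unbounded terms and cannot isolate the target cross distance. Committing to this stronger inductive claim from the outset, and proving it by combining the base-case midpoint triangle inequality with cyclic monotonicity at every step, is what allows the induction to close. The odd-$D$ case requires only cosmetic modifications to the base-case argument, since a common intersecting edge plays the role of a common vertex midpoint.
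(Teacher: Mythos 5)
Parts (1) and (3) of your proposal are correct and essentially the paper's own argument: (1) is Lemma \ref{separate into distances} plus the observation that a second equivalence class of the midpoint-intersection relation would yield a partition, and (3) follows from (1) and (2) exactly as you write. The genuine gap is the inductive step of part (2). Your induction needs the strengthened claim that exact equality $d(\alpha_i,\beta_j)=D$ propagates along chains of support pairs (you say so explicitly), and that claim is false. In $\mathbb{H}_4$ with $D=2$, take $P_0=(\emptyset,\{1,2\})$, $P_1=(\{1,3\},\{1,4\})$, $P_2=(\{3,4\},\{1,2,3,4\})$, each carrying mass $\frac{1}{3}$, with $P_i=(\alpha_i,\beta_i)$. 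All nine source--target distances are at least $2$, so this transportation is optimal; $P_0,P_1$ share the midpoint $\{1\}$ and $P_1,P_2$ share the midpoint $\{1,3,4\}$, so it has no partition; yet $d(\alpha_0,\beta_2)=d(\emptyset,\{1,2,3,4\})=4=2D$ and $d(\alpha_2,\beta_0)=d(\{3,4\},\{1,2\})=4$. Thus already for a chain of length two the outermost cross distances satisfy only the inequality $\geq D$ asserted by the lemma, not equality, and this configuration occurs as an initial segment of longer chains (append $P_3=(\{2,3\},\{2,4\})$, which shares the midpoint $\{2,3,4\}$ with $P_2$ and keeps the transport optimal and partition-free). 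Consequently for $n\geq 3$ your three-cycle inequality on $\{P_0,P_{n-1},P_n\}$, namely $3D^2\leq d(\alpha_0,\beta_n)^2+d(\alpha_{n-1},\beta_0)^2+d(\alpha_n,\beta_{n-1})^2$, contains the term $d(\alpha_{n-1},\beta_0)^2$, which can be as large as $4D^2$; the inequality is then satisfied no matter how small $d(\alpha_0,\beta_n)$ is, and the induction does not close.

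The repair is to drop the induction and apply cyclic monotonicity once to the whole chain, which is what the paper does. Given $x\in A$, $y\in B$ with $d(x,y)<D$, choose support pairs $(a_1,b_1),\ldots,(a_\ell,b_\ell)$ forming a path in the midpoint-intersection graph with $a_1=x$ and $b_\ell=y$. Each shared midpoint gives, via the triangle inequality, only the upper bound $d(a_{i+1},b_i)\leq D$ (no equality is needed), and then
$$\ell D^2=\sum_{i=1}^{\ell} d(a_i,b_i)^2\leq d(a_1,b_\ell)^2+\sum_{i=1}^{\ell-1}d(a_{i+1},b_i)^2< D^2+(\ell-1)D^2=\ell D^2,$$
contradicting cyclic monotonicity of the optimal $\tau$. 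Your base case is precisely the $\ell=2$ instance of this computation; for longer chains the point is to keep all $\ell$ pairs inside a single cyclic-monotonicity inequality, upper-bounding the intermediate shifted cross terms, rather than collapsing the chain pairwise with equalities that need not hold.
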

\begin{proof}
(1)This is a restatement of Lemma \ref{separate into distances}. 

(2) For a fixed transportation $\tau$, we define a graph $J_\tau'$, with vertex set $S_\tau = \{(a,b) : \tau(a,b) > 0\}$ and edge set $E(J_\tau') = \{(a_1, b_1)(a_2,b_2): \widetilde{m}(a_1, b_1) \cap \widetilde{m}(a_2, b_2) \neq \emptyset\}$.
Because $\tau$ has no partition, $J_\tau'$ is a connected graph.

By way of contradiction, assume that $d(x,y) < D$ for some fixed pair $x \in A, y \in B$.  
Let $(a_1,b_1)$ be such that $\tau(a_1, b_1) > 0$ and $x = a_1$ and let $(a_*,b_*)$ be such that $\tau(a_*, b_*) > 0$ and $y = b_*$.
Because $J_\tau'$ is connected, there exists a path $(a_1, b_1), (a_2, b_2), \ldots, (a_\ell, b_\ell) = (a_*, b_*)$.
By (1), we have that $d(a_i, b_i) = D$ for all $i$.
By the triangle inequality and the assumption $\widetilde{m}(a_i, b_i) \cap \widetilde{m}(a_{i+1}, b_{i+1}) \neq \emptyset$, we have that $d(b_i, a_{i+1}) \leq D$.
This is a contradiction, as $\tau$ is not cyclically monotone.

(3) By (2), we see that $W^k(\mu_A, \mu_B) \geq D^k$.  By (1), we see that $W^k(\mu_A, \mu_B) \leq W^k(\tau) = D^k$.
\end{proof}

Let us close this subsection by remarking that finding an optimal transportation for a given $\mu_A, \mu_B$ with a fixed metric $d$ is a problem in linear programming, where the support of each of $\mu_A$ and $\mu_B$ appear as a unique constraint (elements in the intersection of the supports of $\mu_A$ and $\mu_B$ thus show up as two constraints).
The solution to the dual problem is $(\phi,\psi)$, where $\phi$ is a function over the support of $\mu_A$ and $\psi$ is a function over the support of $\mu_B$.
The dual problem is named as the ``dual Kantorovich problem'' by Villani in chapter 5 of \cite{V}.

\begin{remark} \label{constant valued functions}
A consequence of Lemma \ref{everybody is large} applied to Remark 5.13 of \cite{V} is that the optimal transportations between $\mu_A$ and $\mu_B$ will have no partitions if and only if the solutions to the dual problem only have constant valued functions $\phi, \psi$.
\end{remark}

\subsection{Displacement convexity and the hypercube}\label{curve of hypercube sec}

Our first result is that the hypercube does not have positive or nonnegative sort-of-strong displacement convexity.

\begin{example} \label{negative curvature}
The $d$-dimensional hypercube for $d \geq 10$ contains vertex sets $A$ and $B$ such that when $\mu_A$ and $\mu_B$ are uniform probability measures with support on $A$ and $B$ there exists an optimal transport $\tau :A \times B \rightarrow \mathbb{R}$ such that the entropy of the probability space 
$$ \mu_C(\gamma) = \sum_{\alpha \in A, \beta \in B, \gamma \in \widehat{m}(\alpha,\beta)} \frac{\tau(\alpha, \beta)}{|\widehat{m}(\alpha,\beta)|}  $$
is less than the average of the entropies of $\mu_A$ and $\mu_B$.
\end{example}
\begin{proof}
We return to the notation of the Boolean lattice introduced in Section \ref{OV conj}.
Let $A = \{\{1\}, \{2\}, \ldots, \{d'\}\}$ and $B = \{\{d'+1\}, \{d'+2\}, \ldots, \{2d'\}\}$ for $2 d' \leq d$.
Every vertex in $A$ is distance $2$ from any vertex in $B$, so any transportation function $\tau$ is optimal.
We choose the transportation function such that $\tau(\{i\},\{d'+i\}) = \mu_A(\{i\}) = \mu_B(\{d'+i\}) = \frac{1}{d'}$ for each $1 \leq i \leq d'$.
So $\mu_C(\{i, d'+i\}) = \frac{1}{2d'}$, $\mu_C(\emptyset) = 1/2$, and $\mu_C$ is $0$ otherwise.
The entropy of $\mu_A$ and $\mu_B$ are each $\ln(d')$, and the entropy of $\mu_C$ is $\ln(d')/2 + \ln(2)$.
The example thus holds when $d' > 4$.
\end{proof}

Now we will present a result that is progress towards showing that the hypercube does have positive sort-of-weak displacement convexity.
Let us assume for the rest of the section that all transportations do not have a partition as in Definition \ref{partition up a transportation}.
Let $A$ be the support of $\mu_A$ and $B$ be the support of $\mu_B$; the lack of a partition implies that $A \cap B = \emptyset$.

Recall the map $\phi: (S,T)_r \times C^{(r)}_{\lfloor r/2 \rfloor, \lceil r/2\rceil} \rightarrow (\widehat{m}(S,T), \widehat{m}(S,T))_r$ defined in the proof to Theorem \ref{l0 metric}.
We generalize it as follows: let $C_R = \widetilde{m}(\{\emptyset\}, \{\{1,2,\ldots,R\}\})$ using the notation of the Boolean lattice, and let 
$\widetilde{\phi}: A \times B \times C_R \rightarrow \widetilde{m}(A,B) \times \widehat{m}(A,B)$.
If $R$ is even, then $C_R = C^{(R)}_{R/2}$ and $\phi = \widetilde{\phi}$.
If $R$ is odd, then elements of $C_R$ are edges $xy$, where $|x| = (R-1)/2, |y| = (R+1)/2$.
We then define 
$$ \widetilde{\phi}(a, b, (x,y)) = ((\phi(a,b,x),\phi(a,b,y)), (\phi(a,b,\{1,\ldots,R\}\setminus y),\phi(a,b,\{1,\ldots,R\}\setminus x))). $$

We have one advantage on the hypercube when working with an optimal transportation rather than the transportation $\tau(a,b) = \mu_A(a)\mu_B(b)$, which is used for Brunn-Minkowski curvature: in the following Lemma we show that $\widetilde{\phi}$ is injective.

\begin{lemma}\label{injective map from endpoints to midpoints}
If $(\alpha_1, \beta_1), (\alpha_2, \beta_2) \in A \times B$ and $\tau(\alpha_1, \beta_1) \tau(\alpha_2, \beta_2) > 0$ for optimal transport $\tau$, then $\phi(\alpha_1, \beta_1, \pi_1) \neq \phi(\alpha_2, \beta_2, \pi_2)$ for any $\pi_1, \pi_2 \in C_R$.
\end{lemma}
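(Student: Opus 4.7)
The plan is to exploit cyclical monotonicity of the optimal transport $\tau$, reading the structure of $\widetilde{\phi}$ coordinate by coordinate. First, because $\tau$ is optimal and has no partition, Lemma \ref{everybody is large}(1) furnishes a common value $D$ with $d(\alpha,\beta) = D$ whenever $\tau(\alpha,\beta) > 0$; since the third argument of $\widetilde{\phi}$ ranges over $C_R = \widetilde{m}(\{\emptyset\},\{\{1,\ldots,R\}\})$, this common distance is forced to equal $R$. In particular $d(\alpha_1,\beta_1) = d(\alpha_2,\beta_2) = R$.

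Suppose now that $\widetilde{\phi}(\alpha_1,\beta_1,\pi_1) = \widetilde{\phi}(\alpha_2,\beta_2,\pi_2) = (m^{(1)}, m^{(2)})$. Inspection of the recipe defining $\widetilde{\phi}$ shows that for either input pair, a coordinate $j$ satisfies $m^{(1)}(j) = m^{(2)}(j)$ precisely when $\alpha_i(j) = \beta_i(j)$; hence the two pairs $(\alpha_1,\beta_1)$ and $(\alpha_2,\beta_2)$ share the same ``disagreement set'' $I$ of size $R$, and both agree with the common midpoints on every coordinate outside $I$. On each $i_k \in I$ we have $\{\alpha_\ell(i_k),\beta_\ell(i_k)\} = \{m^{(1)}(i_k), m^{(2)}(i_k)\}$ for $\ell=1,2$, so there is a binary choice at each such coordinate: either $\alpha_1(i_k) = \alpha_2(i_k)$ (a \emph{same} coordinate, forcing also $\beta_1(i_k) = \beta_2(i_k)$) or $\alpha_1(i_k) = \beta_2(i_k)$ (a \emph{flip} coordinate, forcing $\beta_1(i_k) = \alpha_2(i_k)$). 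Letting $f$ denote the number of flip coordinates, a direct count yields $d(\alpha_1,\beta_2) = d(\alpha_2,\beta_1) = R - f$.

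The argument then closes with cyclical monotonicity of $\tau$:
\[
d(\alpha_1,\beta_1)^2 + d(\alpha_2,\beta_2)^2 \;\leq\; d(\alpha_1,\beta_2)^2 + d(\alpha_2,\beta_1)^2,
\]
which specializes to $2R^2 \leq 2(R-f)^2$ and, since $0 \leq f \leq R$, forces $f = 0$. Thus $\alpha_1 = \alpha_2$ and $\beta_1 = \beta_2$, and the coordinatewise recipe for $\widetilde{\phi}$ then recovers $\pi_1 = \pi_2$ as well.

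The main obstacle is the middle paragraph's bookkeeping: one must unwind the definition of $\widetilde{\phi}$ carefully enough to see that only ``same'' and ``flip'' can occur at disagreement coordinates, and to handle uniformly both the even-$R$ case (where $C_R = C^{(R)}_{R/2}$ and $\widetilde{\phi}$ produces an ordered pair of midpoints) and the odd-$R$ case (where elements of $C_R$ and the output are edges). Once that structural observation is in place, the cyclical-monotonicity inequality collapses the remaining algebra to a single line.
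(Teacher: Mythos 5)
Your proof is correct and takes essentially the same route as the paper. Both reduce to the observation that if two pairs with $\tau > 0$ mapped to the same $(m^{(1)},m^{(2)})$ without coinciding, then $d(\alpha_1,\beta_2) = d(\alpha_2,\beta_1) = R - f$ for some $f>0$, which is forbidden; you invoke cyclical monotonicity of $\tau$ directly, while the paper cites Lemma~\ref{everybody is large}(2) (itself a consequence of cyclical monotonicity), and you spell out the \emph{same}/\emph{flip} coordinate classification that the paper's statement $d(\alpha_1,\beta_2) = R - d(\pi_1,\pi_2)$ leaves implicit.
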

\begin{proof}
First, let us assume that $R$ is even, and so $C_R$ is a collection of vertices.
Recall that $d(\pi_1,\pi_2) = d(\alpha_1,\alpha_2) = d(\beta_1,\beta_2)$.
Also, that if $\pi'$ is fixed, then $\phi_{\pi'}(s,t) = \phi(s,t,\pi')$ is injective.
Therefore $d(\pi_1, \pi_2) > 0$.
Notice that $d(\alpha_1, \beta_2) = d(\alpha_2, \beta_1) = R - d(\pi_1,\pi_2) < R$.
This contradicts Lemma \ref{everybody is large}.

Now suppose that $R$ is odd, so that $\pi_1 = (x_1, y_1)$ and $\pi_2 = (x_2, y_2)$.
We have that $d(\alpha_1,\alpha_2) = d(\beta_1,\beta_2) = d(x_1, y_2) = d(y_1, x_2)$, and the same argument follows.
\end{proof}

As we saw in Example \ref{negative curvature}, the midpoints will not spread out for every transportation between sets.
Each transportation function is a probability measure over the space $V(G) \times V(G)$, and we will work with the probability measure that maximizes $S(\tau)$ (while still satisfying the conditions of being an optimal transportation).  

\begin{lemma}\label{uniform across a midpoint}
Fix probability distributions $\mu_A, \mu_B$, and let $\tau$ maximize the entropy among optimal transportations from $\mu_A$ to $\mu_B$.
Assume that $\tau$ has no partition.
For a fixed element $X \in V(G) \cup E(G)$, let $\{(a_1,b_1), \ldots, (a_k, b_k)\}$ be the pairs of points such that $\tau(a_i, b_i) > 0$ and $X \in \widetilde{m}(a_i, b_i)$.
Possibly the list $a_1, \ldots, a_k$ has repeats, and so may $b_1, \ldots, b_k$.
Then there exists a $t$ such that $d(a_i, b_j) = t$ for all $i,j$, and $X \in \widetilde{m}(a_i, b_j)$ for all $i,j$.
Moreover, there exists a function $\tau_X:G \rightarrow \mathbb{R}$ such that $\tau(a_i, b_j) = \tau_X(a_i) \tau_X(b_j)$ for all $i,j$.
\end{lemma}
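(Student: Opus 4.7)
The lemma splits into two parts. For the first, apply Lemma~\ref{everybody is large}: because $\tau$ has no partition, there is a single constant $D$ with $d(a,b) = D$ for every pair in the support of $\tau$, and moreover $d(a,b) \geq D$ for every $a \in A$ and $b \in B$. Each listed $(a_i, b_i)$ has $d(a_i, b_i) = D$ and $X \in \widetilde{m}(a_i, b_i)$, so the distance from $X$ (or from the appropriate endpoint of $X$ when $X \in E(G)$) to $a_i$ and to $b_i$ depends only on $D$ and is the same for every $i$. The triangle inequality then yields $d(a_i, b_j) \leq D$, while Lemma~\ref{everybody is large}(2) forces $d(a_i, b_j) \geq D$. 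Equality plus tightness in the triangle inequality gives $X \in \widetilde{m}(a_i, b_j)$.

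For the second part, let $A^*$ and $B^*$ denote the sets of distinct values among the $a_i$'s and $b_j$'s respectively (they are disjoint, since $D > 0$ forces $a \neq b$ for $a \in A^*$, $b \in B^*$). Fix distinct $\alpha, \alpha' \in A^*$ and distinct $\beta, \beta' \in B^*$, and consider the signed perturbation $\tau_\epsilon$ obtained from $\tau$ by subtracting $\epsilon$ from $\tau(\alpha, \beta)$ and $\tau(\alpha', \beta')$ and adding $\epsilon$ to $\tau(\alpha, \beta')$ and $\tau(\alpha', \beta)$. The marginals are preserved by construction, and by part (1) all four pairs have distance $D$, so $W^2(\tau_\epsilon) = W^2(\tau)$ and $\tau_\epsilon$ is still an optimal transportation. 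Since $\tau$ maximizes entropy among optimal transportations, $\epsilon = 0$ is a local maximum of $S(\tau_\epsilon)$. If any entry on $A^* \times B^*$ vanished while a corresponding ``diagonal'' pair was positive, moving $\epsilon > 0$ into the zero entry would produce an entropy gain of order $-\epsilon \ln \epsilon$ that beats every $O(\epsilon)$ loss, contradicting maximality; hence every entry on $A^* \times B^*$ is strictly positive. The first-order optimality condition then yields
$$\tau(\alpha, \beta)\, \tau(\alpha', \beta') = \tau(\alpha, \beta')\, \tau(\alpha', \beta)$$
for every such rectangle, which is the standard $2 \times 2$-minor condition that the positive matrix $(\tau(\alpha, \beta))_{A^* \times B^*}$ factors as an outer product $\tau_X(\alpha)\,\tau_X(\beta)$; concretely, fixing a reference pair $(\alpha_0, \beta_0)$ and setting $\tau_X(\alpha) = \tau(\alpha, \beta_0)/\sqrt{\tau(\alpha_0, \beta_0)}$, $\tau_X(\beta) = \tau(\alpha_0, \beta)/\sqrt{\tau(\alpha_0, \beta_0)}$ works.

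The main obstacle is careful rather than conceptual: I need to pin down why the $-\epsilon \ln \epsilon$ gain from filling a zero entry genuinely dominates the linear losses on the other three entries, and to confirm that $\tau_\epsilon$ stays a nonnegative measure in a two-sided neighborhood of $\epsilon = 0$ (one-sided on the boundary of the simplex, which is where the $-\epsilon\ln\epsilon$ argument is actually needed). A secondary point is the edge case $X \in E(G)$: writing $X = (u,v)$, the midpoint relation imposes a consistent orientation $d(a_i, u) = d(v, b_i) = (D-1)/2$ across the index list, and with this orientation in place the triangle-inequality chase in part (1) carries over unchanged.
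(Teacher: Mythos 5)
Your part (1) matches the paper's: both rest on Lemma~\ref{separate into distances}/Lemma~\ref{everybody is large} together with tightness of the triangle inequality, and your explicit handling of the orientation when $X\in E(G)$ is a correct elaboration of what the paper leaves implicit. For part (2) you take a genuinely different route. The paper replaces $\tau$ on the block $A^*\times B^*$ by the product of its block marginals to form a new transportation $\tau^*$: since every pair in the block sits at the same distance $t$, one has $W^2(\tau^*)=W^2(\tau)$, so $\tau^*$ is still optimal; the independence (maximum-entropy) inequality gives $S(\tau^*)\geq S(\tau)$ with equality only if $\tau$ already equals the product; and maximality of $S(\tau)$ among optimal transportations forces $\tau^*=\tau$, which directly produces $\tau_X=\tau_X^\circ/\sqrt{T_X}$ from the block marginals. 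You instead argue locally with $2\times 2$ rectangle perturbations, using the $-\epsilon\ln\epsilon$ boundary term to rule out zero entries and the first-order condition on the interior to get the minor identities $\tau(\alpha,\beta)\,\tau(\alpha',\beta')=\tau(\alpha,\beta')\,\tau(\alpha',\beta)$, from which the positive rank-one factorization follows. Both are sound; your perturbation preserves marginals and $W^2$ for exactly the reason the paper's replacement does, namely part (1). The paper's global substitution is shorter, avoids the boundary bookkeeping you flagged, needs no degenerate handling when $|A^*|=1$ or $|B^*|=1$, and yields the factorization constructively in terms of the marginals; your version is more elementary and makes the first-order structure explicit, which could be useful if one wished to relax ``maximizes entropy'' to a local maximizer. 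Be sure, when filling in the zero-entry step, to note that a vanishing $\tau(\alpha,\beta')$ with $\alpha=a_i,\ \beta'=b_j$ can always be embedded in a valid rectangle using the diagonal entries $\tau(a_i,b_i)>0$ and $\tau(a_j,b_j)>0$ (if $b_i=\beta'$ or $a_j=\alpha$ the entry would already be positive), so the perturbation is always available.
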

\begin{proof}
That there exists a $t$ such that $d(a_i, b_i) = t$ for all $i$ is a restatement of Lemma \ref{separate into distances}.
By the triangle inequality, we have that $d(a_i, b_j) \leq t$ for all $i,j$.
By Lemma \ref{everybody is large} $d(a_i, b_j) \geq t$.
Moreover, the sharpness of the triangle inequality implies that $X \in \widetilde{m}(a_i, b_j)$ for all $i,j$.

Let $\tau_X^\circ(a_i) = \sum_j \tau(a_i, b_j)$, $\tau_X^\circ(b_j) = \sum_i \tau(a_i, b_j)$, and $T_X = \sum_i \tau_X^\circ(a_i) = \sum_j \tau_X^\circ (b_j)$.
Let $\tau^*(a_i,b_j) = \tau_X^\circ(a_i) \tau_X^\circ(b_j) / T_X$ and $\tau^*(u,v) = \tau(u,v)$ otherwise.
By construction, $\tau^*$ is a transportation from $\mu_A$ to $\mu_B$, and $W^2(\tau^*) = W^2(\tau)$, so $\tau^*$ is also optimal.
We see that $\tau^*$ can be thought of as the product of distributions $\tau_X^\circ(\{a_1, \ldots, a_k\}) \times \tau_X^\circ(\{b_1, \ldots, a_{k'}\})$, which is known to maximize entropy over product spaces by the independence inequality.
Therefore $S(\tau^*) \geq S(\tau)$, with equality only when $\tau^* = \tau$.
By construction, $\tau^*$ satisfies the conclusion of the lemma ($\tau_X = \tau_X^\circ / \sqrt{T_X}$), and so the proof concludes by the condition that $\tau$ had maximum entropy among optimal transportations.
\end{proof}

Before we prove our theorem about the entropy of midpoints, let us recall a few facts about entropy.
Let $\mu$ be a probability measure over the product space $X = X_1 \times X_2 \times \cdots X_k$.
The entropy of $\mu$ is defined as $S(\mu) = \sum_y \mu(y) \ln\left( \frac1{\mu(y)} \right)$.
We are interested in how the entropy acts when we restrict some coordinates of $X$.
For $T \subseteq \{1,2,\ldots,k\}$, let $X_T = \prod_{j \in T} X_j$.
For $T' \subseteq T$, $z' \in X_{T'}$, and $z \in X_{T}$, we use the notation $z \in z'$ to denote the situation where $z'_j = z_j$ for all $j \in T'$.
Let $\mu_T$ be the projection of $\mu$ into $X_{T}$, which is equivalently the probability distribution over $X_{T}$ such that $\mu_T(z) = \sum_{y \in z} \mu(y)$ (because $\mu = \mu_{\{1,2,\ldots,k\}}$).
For $T' \subseteq T$, the conditional entropy is 
$$S(\mu_{T} | \mu_{T'}) = S(\mu_{T}) - S(\mu_{T'}) =  \sum_{z' \in X_{T'}} \sum_{z \in z'} \mu_{T}(z)\ln\left(\frac{\mu_{T'}(z')}{\mu_T(z)} \right) $$
$$ = \sum_{y \in X} \sum_{y \in z \in z'} \mu(y) \ln\left(\frac{\mu_{T'}(z')}{\mu_T(z)} \right). $$
From the definitions, when $z' \in z$ we have $\mu_T(z) \leq \mu_{T'}(z')$, and so the conditional entropy is always nonnegative.
The name comes from the fact that if $z' \in X_{T'}$ is fixed and $y_{z'} \in X_T$ is a random variable conditioned on $y_{z'} \in z'$, then $S(\mu_{T} | \mu_{T'}) = \mathbb{E}_{z'} S(y_{z'})$.
From the formula, we see that if $T'' \subseteq T' \subseteq T$, then $S(\mu_{T'} | \mu_{T''}) = S(\mu_{T}|\mu_{T''}) - S(\mu_{T}|\mu_{T'})$.

We define a probability distribution $\zeta$ over $A \times B \times C_R \times M \times M$, where $\zeta(a,b,c,m,m') = \tau(a,b)/|C_R|$ if $\widetilde{\phi}(a, b, c) = (m, m')$, and $\zeta(a,b,c,m,m') = 0$ otherwise.
We will study how the entropy of $\zeta$ changes as we project onto specific coordinates.
This will be clearer if we use the following abuse of notation: let $A$ represent the first coordinate, $B$ represent the second coordinate, $C_R$ represent the third coordinate, $M$ represent the fourth coordinate, and $M'$ represent the fifth coordinate.
So $\zeta_{\{A,C_R,M\}}$ is $\zeta$ projected onto the first, third, and fourth coordinates.
By construction, $\zeta_{\{A\}} = \mu_A$, $\zeta_{\{B\}} = \mu_B$, $\zeta_{\{M\}} = \zeta_{\{M'\}} = \mu_C$, $\tau = \zeta_{\{A,B\}}$, and $\zeta_{\{C_R\}}$ is a uniform distribution.
By symmetry, for any $S \subseteq \{A,B,C_R\}$ we have that $\zeta_{S \cup \{M\}}$ is isomorphic to $\zeta_{S \cup \{M'\}}$.
Because $\widetilde{\phi}$ is injective, each of $\zeta_{\{M,M'\}}, \zeta_{\{A,B,C_R\}}, \zeta_{\{A,B,M\}}$ is isomorphic to $\zeta$.

We will use the following technical statement.

\begin{claim}\label{weird entropy inequality}
Let $\zeta$ be defined as above for optimal transportation $\tau$ with maximum entropy that has no partition.
We have 
$$ S(\zeta_{\{A,M\}}|\zeta_{\{A\}}) - S(\zeta | \zeta_{\{B,M\}}) = S(\mu_C) - S(\mu_A), $$
$$ S(\zeta_{\{B,M\}}|\zeta_{\{B\}}) - S(\zeta | \zeta_{\{A,M\}}) = S(\mu_C) - S(\mu_B), $$
$$S(\zeta | \zeta_{\{A,M\}}) + S(\zeta | \zeta_{\{B,M\}}) = S(\zeta|\zeta_{\{M\}}),$$
$$ S(\zeta_{\{A,M\}}|\zeta_{\{A\}}), S(\zeta_{\{B,M\}}|\zeta_{\{B\}}) \geq \ln(|C_R|).$$
\end{claim}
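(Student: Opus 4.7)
The plan is to unfold each conditional entropy using the identity $S(\mu_T \mid \mu_{T'}) = S(\mu_T) - S(\mu_{T'})$, valid whenever $T' \subseteq T$, so that every claim becomes a linear relation among joint entropies of projections of $\zeta$. Then I would appeal to two structural facts about $\zeta$: (a) the map $\widetilde\phi$ is injective on its support, which combines the different-$(a,b)$ case from Lemma \ref{injective map from endpoints to midpoints} with the observation that for any fixed $(a, b)$ in the support of $\tau$, the map $c \mapsto \widetilde\phi(a, b, c)$ is visibly injective from its definition; and (b) $A \perp B \mid M$, which is precisely the content of Lemma \ref{uniform across a midpoint} via the factorization $\tau(a, b) = \tau_m(a)\tau_m(b)$ on each fiber of the midpoint projection.

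From (a), the coordinates $(A, B, C_R)$ determine $(M, M')$ and also $(A, B, M)$ determines $(C_R, M')$, so
$$ S(\zeta_{\{A, B, M\}}) = S(\zeta_{\{A, B, C_R\}}) = S(\zeta) = S(\tau) + \ln|C_R|, $$
where the last equality uses that $C_R$ is independent and uniform in $\zeta$. From (b), the standard entropy identity for conditional independence reads $S(\zeta_{\{A, B, M\}}) + S(\mu_C) = S(\zeta_{\{A, M\}}) + S(\zeta_{\{B, M\}})$, and combining yields the master identity
$$ S(\zeta) + S(\mu_C) = S(\zeta_{\{A, M\}}) + S(\zeta_{\{B, M\}}). $$
A direct expansion shows this one equation implies the first three equalities: Statement 1 is its rearrangement after subtracting $S(\zeta) + S(\mu_A)$; Statement 2 is the same after symmetry $A \leftrightarrow B$; and Statement 3 is obtained by adding $[S(\zeta) - S(\zeta_{\{A, M\}})] + [S(\zeta) - S(\zeta_{\{B, M\}})]$ and recognizing the sum as $S(\zeta) - S(\mu_C) = S(\zeta \mid \zeta_{\{M\}})$.

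For the inequality in Statement 4, I would rewrite $S(\zeta_{\{A, M\}} \mid \zeta_{\{A\}}) = S(M \mid A)$ and use that additional conditioning cannot increase entropy, so $S(M \mid A) \geq S(M \mid A, B)$. For each fixed $(a, b)$ in the support of $\tau$, the conditional law of $M$ is the pushforward of the uniform distribution on $C_R$ along the injective map $c \mapsto \widetilde\phi(a, b, c)$ (followed by projection to its first coordinate), hence uniform on $|C_R|$ atoms, giving $S(M \mid A = a, B = b) = \ln|C_R|$ and therefore $S(M \mid A, B) = \ln|C_R|$ exactly. Swapping the roles of $A$ and $B$ gives the other half. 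The main conceptual point is that the whole claim reduces to a single master identity plus a one-step data-processing inequality, so the principal obstacle is bookkeeping rather than any analytical difficulty; the only care needed is in tracking the five coordinates of $\zeta$ through the projections.
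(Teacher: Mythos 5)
Your proposal is correct, and it reaches the four statements by a cleaner route than the paper, though both rest on the same two structural inputs: the injectivity properties of $\widetilde{\phi}$ and the factorization $\tau(a_i,b_j)=\tau_X(a_i)\tau_X(b_j)$ of Lemma \ref{uniform across a midpoint}. The paper feeds these directly into explicit sums: it writes $\zeta_{\{A,M\}}(a,m)=\sum_i \delta_m(a)\delta_m(b_i)/|C_R|$, expands each conditional entropy from its defining formula, and verifies the equalities by cancelling logarithms term by term, with the final inequality coming from the pointwise bound $\mu_A(a)\geq\delta_{m^*}(a)\sum_b\delta_{m^*}(b)$. You instead repackage Lemma \ref{uniform across a midpoint} as the conditional independence $A\perp B\mid M$ under $\zeta$ (which is legitimate: the lemma gives both that all cross pairs $(a_i,b_j)$ share the midpoint and that $\tau$ factors on the fiber, so the conditional law of $(A,B)$ given $M=m$ is a product), combine it with $S(\zeta_{\{A,B,M\}})=S(\zeta)$ to get the master identity $S(\zeta)+S(\mu_C)=S(\zeta_{\{A,M\}})+S(\zeta_{\{B,M\}})$, and then the first three equalities are pure bookkeeping with $S(\mu_T\mid\mu_{T'})=S(\mu_T)-S(\mu_{T'})$; I checked the rearrangements and they are right. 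Your fourth inequality, $S(M\mid A)\geq S(M\mid A,B)=\ln(|C_R|)$, is a genuinely different and slightly lighter argument than the paper's, since it needs only injectivity and monotonicity of conditional entropy, not the factorization. The one point you should state explicitly is that the exact computation $S(M\mid A=a,B=b)=\ln(|C_R|)$ requires injectivity of $c\mapsto m^{(1)}$ \emph{after} projecting $\widetilde{\phi}(a,b,c)$ to its first coordinate, which does not follow formally from injectivity of $\widetilde{\phi}(a,b,\cdot)$ alone; it is, however, exactly the fact that $(A,B,M)$ determines $(C_R,M')$ (given $a,b$ and $m^{(1)}$ one recovers $c$ as the set of differing coordinates on which $m^{(1)}$ agrees with $a$), which you already invoke in step (a) and which the paper asserts when it says $\zeta_{\{A,B,M\}}$ is isomorphic to $\zeta$ — so nothing is missing, it just deserves a sentence. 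In trade, the paper's computational proof produces the explicit formulas for $\zeta_{\{A,M\}}$, $S(\zeta\mid\zeta_{\{M\}})$, etc., in a form that is convenient for the surrounding argument, while your version isolates the information-theoretic skeleton (one identity plus a data-processing step) and would transfer more easily to other spaces where an analogue of Lemma \ref{uniform across a midpoint} holds.
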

\begin{proof}
By the injectivity of $\widetilde{\phi}$, the sets $(m,m')$ for a fixed $m$ is in bijection with the sets $(a,b)$ such that $m \in \widetilde{m}(a, b)$ and $\tau(a,b)>0$.
By Lemma \ref{uniform across a midpoint}, we have $\zeta_{\{A,M\}}(a,m) = \sum_{i}\frac{\delta_m(a)\delta_m(b_i)}{|C_R|}$ and 
\begin{eqnarray*}
\mu_A(a)	& = &	\sum_{b \in B} \tau(a,b) \\
		& = &	\sum_{b \in B} \frac{1}{|C_R|} \sum_{\phi(a,b,c) = (m,m')} \tau(a,b) \\
		& = &	\sum_{\widetilde{\phi}(a,b,c) = (m,m')} \frac{\delta_m(a) \delta_m(b) }{|C_R|}. 
\end{eqnarray*}

By the definition of conditional entropy and Lemma \ref{uniform across a midpoint}, we have that 
\begin{eqnarray*}
 S(\zeta_{\{A,M\}}|\zeta_{\{A\}}) & = & \sum_{a,m} \zeta_{\{A,M\}}(a,m) \ln \left(\frac{\zeta_{\{A\}}(a)}{\zeta_{\{A,M\}}(a,m)} \right)\\
			& = & \sum_{a,m} \left(\sum_{i}\frac{\delta_m(a)\delta_m(b_i)}{|C_R|}\right) \ln \left(\frac{\mu_A(a)|C_R|}{\sum_{i'}\delta_m(a)\delta_m(b_{i'})}\right) \\
		& = & \sum_{a,m,b} \frac{\delta_m(a)\delta_m(b)}{|C_R|}\ln \left(\frac{\mu_A(a)|C_R|}{\delta_m(a)\sum_{i'}\delta_m(b_{i'})}\right),
\end{eqnarray*}
and 
$$ S(\zeta | \zeta_{\{B,M\}}) = \sum_{\widetilde{\phi}(a,b,c) = (m,m')} \frac{\delta_m(a)\delta_m(b)}{|C_R|} \ln \left(\frac{\sum_{i'}\delta_m(a_{i'})}{\delta_m(a)} \right).$$
So 
\begin{eqnarray*}
S(\zeta_{\{A,M\}}|\zeta_{\{A\}}) - S(\zeta | \zeta_{\{A,M\}}) & = & \sum_{\widetilde{\phi}(a,b,c) = (m,m')} \frac{\delta_m(a)\delta_m(b)}{|C_R|} \ln \left(\frac{\mu_A(a)|C_R|}{\sum_{i'}\delta_m(a_{i'})\sum_{j'}\delta_m(b_{j'})} \right) \\
		& = & S(\mu_C) - S(\mu_A).
\end{eqnarray*}

The second equality follows from a symmetric argument.
The third equality follows a similar argument, with 
$$S(\zeta | \zeta_{\{M\}}) = \sum_{m \in M} \sum_{a_i, b_j} \frac{\delta_m(a_i) \delta_m(b_j)}{|C_R|} \ln\left(\frac{\left(\sum_{i'} \delta_m(a_{i'})\right)\left(\sum_{j'} \delta_m(b_{j'})\right)|C_R|}{\delta_m(a_i) \delta_m(b_j)} \right)$$
and 
$$S(\zeta | \zeta_{\{A,M\}}) = \sum_{a,b,m}\frac{\delta_m(a)\delta_m(b)}{|C_R|}\ln \left(\frac{\sum_{i}\delta_m(a_{i})}{\delta_m(a)}\right).$$

For the final inequality, we first remark that $\mu_A(a) \geq \delta_{m^*}(a) \sum_b \delta_{m^*}(b)$ for any fixed $m^*$.
So
\begin{eqnarray*}
S(\zeta_{\{A,M\}}|\zeta_{\{A\}})	& = &	\sum_{\widetilde{\phi}(a,b,c) = (m,m')} \frac{\delta_m(a) \delta_m(b) }{|C_R|} \ln\left(\frac{|C_R|\mu_A(a)}{\delta_m(a)\sum_{b'}\delta_m(b')} \right)\\
					& \geq & \sum_{\widetilde{\phi}(a,b,c) = (m,m')} \frac{\delta_m(a) \delta_m(b) }{|C_R|} \ln\left(|C_R| \right)\\
					& \geq & \ln(|C_R|).
\end{eqnarray*}
The result for $S(\zeta_{\{B,M\}}|\zeta_{\{B\}})$ follows symmetrically.
\end{proof}

Now we are prepared to prove the theorem.

\begin{theorem} \label{weak curvature for hypercube}
Let $\mu_A, \mu_B$ be probability distributions over the discrete hypercube.
Let $\tau$ be an optimal transportation from $\mu_A$ to $\mu_B$ that maximizes entropy, and let $\mu_C$ be the probability distribution over $\widetilde{m}(\mu_A, \mu_B)$ as calculated by $\tau$ with distance interpolation.
If $d(a,b) = R$ for some constant $R$ whenever $\tau(a,b) > 0$, then 
$$ S(\mu_C) \geq \frac13 (S(\mu_A) + S(\mu_B) + 2 \ln(|C_R|)).$$
\end{theorem}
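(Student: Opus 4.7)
The plan is to combine the four identities and inequalities in Claim \ref{weird entropy inequality} with one additional ingredient, namely a bound of the form $S(\zeta) \leq 2 S(\mu_C)$ that comes from subadditivity of Shannon entropy together with the isomorphism $\zeta \cong \zeta_{\{M,M'\}}$.

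Concretely, first I would substitute the lower bound $S(\zeta_{\{A,M\}} | \zeta_{\{A\}}) \geq \ln|C_R|$ into the first identity of Claim \ref{weird entropy inequality} to obtain $\ln|C_R| - S(\zeta | \zeta_{\{B,M\}}) \leq S(\mu_C) - S(\mu_A)$, and similarly $\ln|C_R| - S(\zeta | \zeta_{\{A,M\}}) \leq S(\mu_C) - S(\mu_B)$. Adding these two inequalities and using the third identity of the claim, $S(\zeta | \zeta_{\{A,M\}}) + S(\zeta | \zeta_{\{B,M\}}) = S(\zeta | \zeta_{\{M\}})$, yields
\[ S(\mu_A) + S(\mu_B) + 2\ln|C_R| - S(\zeta | \zeta_{\{M\}}) \leq 2 S(\mu_C). \]

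The next step is to bound $S(\zeta|\zeta_{\{M\}}) = S(\zeta) - S(\mu_C)$ from above by $S(\mu_C)$. For this I would appeal to the remark preceding Claim \ref{weird entropy inequality} that $\widetilde{\phi}$ is injective, which gives $\zeta \cong \zeta_{\{M, M'\}}$, together with the stated symmetry $\zeta_{\{M\}} = \zeta_{\{M'\}} = \mu_C$. Standard subadditivity of entropy applied to the pair of coordinates $(M, M')$ then delivers
\[ S(\zeta) = S(\zeta_{\{M, M'\}}) \leq S(\zeta_{\{M\}}) + S(\zeta_{\{M'\}}) = 2 S(\mu_C), \]
so that $S(\zeta | \zeta_{\{M\}}) \leq S(\mu_C)$. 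Substituting this into the displayed inequality above gives $S(\mu_A) + S(\mu_B) + 2\ln|C_R| \leq 3 S(\mu_C)$, which is the theorem.

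I do not expect any single step to be a serious obstacle, since all the heavy lifting (the injectivity of $\widetilde{\phi}$, the structural identities for $\zeta$, and the lower bound $\ln|C_R|$ on the conditional entropy) has been packaged in Lemma \ref{injective map from endpoints to midpoints}, Lemma \ref{uniform across a midpoint}, and Claim \ref{weird entropy inequality}. The one place I would want to verify with care is that the isomorphism $\zeta \cong \zeta_{\{M, M'\}}$ is genuinely available under the theorem's hypotheses — that is, that the assumption of constant $d(a,b) = R$ on the support of $\tau$ (combined with the no-partition reduction implicit in the setup of the section) lets us reuse the machinery from the proof of Claim \ref{weird entropy inequality} without modification. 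Granted this, the three-line entropy manipulation above completes the proof.
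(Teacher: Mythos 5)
Your proof is correct and takes essentially the same route as the paper: you derive $S(\mu_A) + S(\mu_B) + 2\ln|C_R| - S(\zeta|\zeta_{\{M\}}) \leq 2S(\mu_C)$ from Claim~\ref{weird entropy inequality}, then close it with the subadditivity bound $S(\zeta) = S(\zeta_{\{M,M'\}}) \leq 2S(\mu_C)$. The only cosmetic difference is that you invoke the third identity of Claim~\ref{weird entropy inequality} directly, whereas the paper rederives its content by adding the first two identities and phrases the last step as an optimization over $S(\zeta)$; the algebra is the same.
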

\begin{proof}
Summing the first two equalities from Claim \ref{weird entropy inequality} and simplifying by the definition of conditional entropy, we have that 
$$ S(\zeta_{\{A,M\}}) + S(\zeta_{\{B,M\}}) - S(\zeta) = S(\mu_C). $$
The last inequality of Claim \ref{weird entropy inequality} implies that $S(\zeta_{\{A,M\}}) \geq S(\mu_A) + \ln(|C_R|)$, so 
\begin{equation}\label{sort of weak curve ineq 1}
S(\mu_C) \geq S(\mu_A) + S(\mu_B) - S(\zeta) + 2\ln(|C_R|).
\end{equation}
Now $S(\zeta) = S(\zeta_{\{M,M'\}}) \leq S(\zeta_{\{M\}}) + S(\zeta_{\{M'\}}),$ so 
\begin{equation}\label{sort of weak curve ineq 2}
S(\mu_C) \geq \frac{1}{2} S(\zeta).
\end{equation}
When we combine (\ref{sort of weak curve ineq 1}) and (\ref{sort of weak curve ineq 2}), we see that
$$S(\mu_c) \geq \max \{ S(\mu_A) + S(\mu_B) - S(\zeta) + 2 \ln(|C_R|), S(\zeta)/2\}.$$
The right hand side is minimized when $S(\zeta) = \frac23 (S(\mu_A) + S(\mu_B) + 2 \ln(|C_R|))$, and the resulting value is the statement of the theorem.
\end{proof}

\begin{theorem}\label{almost curved}
Let $\mu_A, \mu_B$ be probability distributions over the $d$-dimensional discrete hypercube.
There exists a transportation $\tau$ that when combined with distance interpolation produces a probability distribution $\mu_C$ over the midpoints such that 
$$ S(\mu_C) \geq \frac13 S(\mu_A) + \frac13 S(\mu_B) + \frac{2}{5d^3} (W^2(\mu_A, \mu_B))^2 - \frac{2}{3}. $$
\end{theorem}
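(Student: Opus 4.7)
The plan is to reduce to a partitionless optimal transportation via Lemma~\ref{induct via partition} and then invoke Theorem~\ref{weak curvature for hypercube}; the heavy analytic work has already been done, so what remains is essentially bookkeeping. I would fix $\tau$ to be an optimal transportation from $\mu_A$ to $\mu_B$ that maximizes entropy (compactness provides such a $\tau$), then apply Lemma~\ref{induct via partition} recursively with $k=2$, $C = 1/3$, and $f(x) = \tfrac{2}{5d^3} x^2 - \tfrac{2}{3}$. The convexity hypothesis on $f$ holds because $f''(x) > 0$, and the recursion terminates at partitionless pieces since the support is finite. One point to verify along the way is that partitioning preserves entropy-maximization: if $\tau = \eta_1 \tau_1 + \eta_2 \tau_2$ is a partition and $\tau_1^\star$ had strictly greater entropy than $\tau_1$ among optimal transports from $\mu_{A,1}$ to $\mu_{B,1}$, then Claim~\ref{optimal broken up is optimal} would make $\eta_1 \tau_1^\star + \eta_2 \tau_2$ an optimal transport from $\mu_A$ to $\mu_B$, and the mixture-entropy identity would force it to have strictly greater entropy than $\tau$, contradicting the choice of $\tau$.

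It therefore suffices to prove the inequality assuming $\tau$ has no partition. Lemma~\ref{everybody is large} then produces a constant $R$ with $d(a,b) = R$ for every $(a,b)$ in the support of $\tau$ and $W^2(\mu_A, \mu_B) = R^2$, so $\bigl(W^2(\mu_A, \mu_B)\bigr)^2 = R^4$. Theorem~\ref{weak curvature for hypercube} applies directly and yields
$$ S(\mu_C) \geq \tfrac{1}{3} S(\mu_A) + \tfrac{1}{3} S(\mu_B) + \tfrac{2}{3} \ln|C_R|. $$
The theorem is thus reduced to the deterministic inequality $\ln|C_R| + 1 \geq \tfrac{3R^4}{5d^3}$, and since $R \leq d$ gives $R^4/d^3 \leq R$, it suffices to show $\ln|C_R| + 1 \geq 3R/5$ for every $0 \leq R \leq d$.

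This final estimate is routine. The cases $R \in \{0,1\}$ are immediate. For $R \geq 2$, $|C_R|$ equals $\binom{R}{R/2}$ when $R$ is even and $\tfrac{R+1}{2}\binom{R}{(R-1)/2}$ when $R$ is odd, and standard binomial bounds (pigeonhole, or Stirling where tighter constants are needed) give $\ln|C_R| \geq R\ln 2 - O(\ln R)$. Since $\ln 2 > 3/5$ with strictly positive slack, this dominates $3R/5 - 1$ for all $R \geq 2$, and the finitely many small-$R$ corner cases are checkable by direct computation. The main obstacle, such as it is, is purely bookkeeping: no new analytic ideas are needed beyond those already in Theorem~\ref{weak curvature for hypercube}, and the only nontrivial step is the entropy-maximization/partitioning interaction handled in the first paragraph.
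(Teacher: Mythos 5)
Your proof follows the same route as the paper's, with only a cosmetic difference in the order of the two Wasserstein conversions. The paper applies Lemma~\ref{induct via partition} with $k = 1$ and $f(x) = 0.4x - \tfrac{2}{3}$ to get $S(\mu_C) \geq \tfrac13(S(\mu_A) + S(\mu_B) - 2) + 0.4 W^1(\mu_A,\mu_B)$, and then converts via $W^1 \geq W^2/d \geq (W^2)^2/d^3$; you convert first in the partitionless case and then apply the lemma with $k = 2$ and the strictly convex $f(x) = \tfrac{2}{5d^3}x^2 - \tfrac23$. Both are legitimate since $f$ is convex in each case, and both routes bottom out at the identical numerical bound $\ln|C_R| \geq \tfrac35 R - 1$ for all $0 \leq R \leq d$ (which the paper records as software-verified; your pigeonhole/Stirling sketch is an equivalent justification).

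Your observation that the pieces $\tau_i$ of the partition must satisfy the entropy-maximization hypothesis of Theorem~\ref{weak curvature for hypercube} is a genuine point that the paper glosses over, but the justification you give has a gap. You appeal to the mixture-entropy identity $S(\eta_1 p_1 + \eta_2 p_2) = \eta_1 S(p_1) + \eta_2 S(p_2) + H(\eta)$ for the replacement $\eta_1\tau_1^\star + \eta_2\tau_2$, but this identity requires the supports of $\tau_1^\star$ and $\tau_2$ to be disjoint. That holds for $\tau_1$ and $\tau_2$ by construction of the partition; it need not hold for an \emph{arbitrary} optimal transport $\tau_1^\star$ with the same marginals $\mu_{A,1},\mu_{B,1}$. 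If $D_1 = D_2$ and the supports $A_1 \cap A_2$, $B_1 \cap B_2$ overlap, $\tau_1^\star$ can place mass on pairs in $E_2$, in which case you only get the one-sided bound $S(\eta_1\tau_1^\star + \eta_2\tau_2) \geq \eta_1 S(\tau_1^\star) + \eta_2 S(\tau_2)$, which is not enough to beat $S(\tau) = \eta_1 S(\tau_1) + \eta_2 S(\tau_2) + H(\eta)$. What actually makes the reduction work is weaker and more local: the only modification used in the proof of Lemma~\ref{uniform across a midpoint} is a change $\tau_1 \mapsto \tau_1^*$ supported on pairs $(a_i, b_j)$ that all share the fixed midpoint $X$, and $X$ is a midpoint of some pair in $E_1$, so by the partition condition none of the modified pairs can lie in $E_2$. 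For this specific $\tau_1^*$ the supports \emph{are} disjoint, the mixture-entropy identity applies, and global maximality of $S(\tau)$ forces $\tau_1^* = \tau_1$; hence the conclusion of Lemma~\ref{uniform across a midpoint} (and thus Theorem~\ref{weak curvature for hypercube}) holds on each piece. You should replace the claim ``partitioning preserves entropy-maximization'' with this more careful statement, or at least verify the disjoint-support condition before invoking the identity.
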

\begin{proof}
It is easy to calculate that $|C_{2i}| = {2i \choose i} \approx 2^{2i} i^{-0.5}$ and that $|C_{2i+i}| = {2i +1 \choose i} (i+1) \approx 2^{2i+1} i ^{0.5}$.
We used computer software to calculate $\ln(2) \geq 0.69$ and to confirm that $\ln(|C_R|) \geq 0.6 R - 1$.
So Lemma \ref{induct via partition} applied to Theorem \ref{weak curvature for hypercube} provides that 
$$ S(\mu_C) \geq \frac13 (S(\mu_A) +  S(\mu_B)-2) + 0.4 W^1(\mu_A, \mu_B).$$
For any integers $\ell \leq \ell'$, $k \leq k'$, space $\Omega$ and probability measures $\mu_A, \mu_B$ we have that $W^\ell(\mu_A, \mu_B) \geq W^{\ell'}(\mu_A, \mu_B) \diam(\Omega)^{\ell-\ell'}$ and $(W^\ell(\mu_A, \mu_B))^k \geq (W^{\ell}(\mu_A, \mu_B))^{k'} \diam(\Omega)^{\ell(k-k')}$.
\end{proof}

Let us finish this section by returning to the Brunn-Minkowski curvature of the hypercube.

\begin{proposition} \label{hypercube can do better}
There exists $\delta>0$ and $D$ such that the $d$-dimensional hypercube for $d > D$ has Brunn-Minkowski curvature at least $\frac{1}{2d}(1+\delta)$.
\end{proposition}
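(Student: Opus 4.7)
The plan is to split the analysis by $r := d_*(S,T)$ and handle the two regimes separately, combining them to get a uniform improvement over $1/(2d)$.

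For the large-distance regime $r \geq c d$ (with $c > 0$ a small fixed constant to be chosen later), I would apply Theorem \ref{l0 metric} to the hypercube $\mathbb{H}_d = K_2 \square K_2 \square \cdots \square K_2$ with $\rho = 1/2$ and $\epsilon = 0$, so that the constant in the theorem is $C = (r/d)^2$. This yields
$$|\widehat{m}(S,T)| \geq \sqrt{|S||T|}\, e^{(r/d)^2 d (1+o(1))/2},$$
which, when compared against the Brunn-Minkowski requirement $\sqrt{|S||T|}\, e^{K r^2/8}$, translates into $K \geq 4/d (1+o(1))$. Since $4/d$ vastly exceeds $(1+\delta)/(2d)$ for any fixed $\delta < 7$ and sufficiently large $d$, the large-distance regime is essentially free.

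For the small-distance regime $r < c d$, Theorem \ref{l0 metric} does not directly apply, and I would instead adapt its internal counting argument. The injection $\phi$ from $(S,T)_r \times C_r$ into $(\widehat{m}(S,T), \widehat{m}(S,T))_r$ remains well-defined, and the preimage-size bound reduces to a concentration estimate on $C_r$. Replacing the standard subgaussian bound $\sigma_{S_r}^2 \leq r-1$ by the strict form $\sigma_{S_r}^2 < r - 1 - \eta(r)$ guaranteed by Remark \ref{strict inequality}, where $\eta(r) = \sum_{m \text{ odd},\, 3 \leq m \leq r}(1 - 4\sigma_{K_m}^2)$ is a positive constant bounded below once $r$ is at least $3$, produces a multiplicative gain in the preimage bound that propagates to a $(1+\delta)/(2d)$ improvement in the Brunn-Minkowski exponent.

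The main obstacle is verifying that the constant-order gap $\eta(r) = O(1)$ in the subgaussian bound translates into a multiplicative $\Theta(1/d)$ improvement in the curvature, rather than a negligible $O(1/d^2)$ additive shift. Since $\eta(r)$ occupies the denominator whose leading term is $r-1$, the improvement in the exponent of $|\widehat{m}(S,T)|$ scales like $\eta(r) \cdot r^2/(r-1)^2$, and verifying this remains $\Omega(r/d^2) \cdot r$ uniformly for $r$ in a constant fraction of the allowed range is the key technical step. Matching the small-$r$ gain with the large-$r$ bound at the boundary $r = c d$ then selects the final $\delta > 0$ and $D$ and completes the proof.
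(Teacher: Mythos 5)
Your small-distance regime does not close, and the obstacle you flag at the end is fatal rather than a verification step. To raise the curvature from $\frac{1}{2d}$ to $\frac{1+\delta}{2d}$ within the per-class scheme of \cite{OV}, one must improve the inequality $|(\widehat{m}(S,T),\widehat{m}(S,T))_r| > |(S,T)_r|\,e^{d_*(S,T)^2/(8r)}$ at its binding classes, namely $r$ close to $d$ (such classes are present even when $d_*(S,T) < cd$, since pairs in $S\times T$ can be much farther apart than $d_*$; note also that your $r$ plays two roles, the minimum distance and the class index). After summing over classes and taking the square root, curvature $\frac{1+\delta}{2d}$ requires each per-class exponent to be at least $\frac{(1+\delta)d_*(S,T)^2}{8d}$, so at $r\approx d$ you need an additive gain of order $\delta\,d_*(S,T)^2/d$ in the exponent. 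Your mechanism, replacing $\sigma^2 \le r-1$ by $\sigma^2 \le r-1-\eta(r)$ via Remark \ref{strict inequality}, changes the exponent from roughly $\frac{d_*(S,T)^2}{8(r-1)}$ to $\frac{d_*(S,T)^2}{8(r-1-\eta(r))}$, an additive gain of order $\eta(r)\,d_*(S,T)^2/r^2$. Since Remark \ref{strict inequality} itself records that $\eta(r)=\sum_m O(m^{-2})$ is bounded, this falls short of what is required by a factor of order $d$, uniformly in $d_*(S,T)$; it only yields curvature $\frac{1}{2(d-O(1))}=\frac{1}{2d}\bigl(1+O(1/d)\bigr)$. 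No bounded improvement of the subgaussian constant can produce a fixed $\delta>0$.

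The paper obtains the constant-factor gain by different means, and that ingredient is what your outline would need. It splits at $d_*(S,T)\ge(1-\epsilon_1)d$ rather than at $cd$: in that extreme case $|S|$ and $|T|$ are exponentially smaller than $2^d$ by concentration, while a single pair already contributes $\binom{r}{r/2}\approx 2^r/\sqrt{r}$ midpoints, so the inequality holds with large margin and with no appeal to Theorem \ref{l0 metric} (whose hypotheses require $\epsilon>0$ and whose $o(1)$ is unquantified; if you do invoke it for $r\ge cd$, the margin of its constant over the required $\frac{(1+\delta)r^2}{16d}$ is precisely what should be checked rather than assumed). When $d_*(S,T)<(1-\epsilon_1)d$, the classes $r<(1-\epsilon_2)d$ already carry the factor $(1-\epsilon_2)^{-1}$ for free from the per-class bound of \cite{OV}, and for the binding classes $r\ge(1-\epsilon_2)d$ the paper abandons the concentration step entirely: the map sending $(a,b)$ to a fixed midpoint $\phi(a,b,c)$ together with the symmetric difference $a\triangle b$ injects pairs at distance $r$ into $\widehat{m}(S,T)\times C^{(d)}_r$, and since $|C^{(d)}_r|\le 2^d e^{-\Theta(d)}$ for such $r$ while $|\widehat{m}(S,T)|\ge\binom{r}{r/2}$, the per-class ratio gains a factor $e^{\Theta(d)}$ --- the order of gain that is actually needed.
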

\begin{proof}
First, let us consider the situation where $d(S,T) \geq (1-\epsilon_1)d$ for some $\epsilon_1 > 0$.
Note that for any $(a,b) \in (S,T)_r$, we have that 
\begin{equation}\label{many midpoints for large spaces}
 \widehat{m}(S,T) \geq \widehat{m}(a,b) \approx {r \choose r/2} \approx 2^r / \sqrt{r}.
\end{equation}
On the other hand, because $1 - \epsilon_1 > 1/2$, we have that $|S|,|T| \leq 2^d e^{d(1/2-\epsilon_1)^2/2}$.
So if $\epsilon_1 = 0.01$, then $r \geq 99d/100$, and for sufficiently large $d$ and sufficiently small $\delta$ we get that 
$$ |\widehat{m}(S,T)| \geq 2^{d(0.99 - o(1))} \geq 2^{d(1 - 0.49^2\ln(2)/2)} e^{\frac{d}{16}(1+\delta)} \geq \sqrt{|S||T|} e^{\frac{d(S,T)^2}{8}\frac{1}{2d}(1+\delta)}. $$

So now assume that $d(S,T) < (1-\epsilon_1)d$.
The version of the claim in the proof to Theorem \ref{l0 metric} that appears in \cite{OV} is that $|(\widehat{m}(S,T), \widehat{m}(S,T))_r| > |(S,T)_r| e^{d(S,T)^2/(8r)}$.
This inequality is weakest when $r$ is largest, so an improvement on this inequality for $r \geq (1-\epsilon_2)d$ for some fixed $\epsilon_2 > 0$ is sufficient for an improvement on the final result.
Fix a $c \in C^{(r)}_{\lfloor r/2 \rfloor, \lceil r/2\rceil}$, and define a function $\psi: A \times B \rightarrow \widehat{m}(S,T) \times C^{(d)}_{r}$ such that the first coordinate of $\psi(a,b)$ is $\phi(a,b,c)$ and the second coordinate is a vector over $\{0,1\}^d$ such that the set of coordinates with nonzero entries is the symmetric difference between $a, b$ as elements of the Boolean lattice.
By the binary nature of the discrete hypercube, $\psi$ is invertible.
But when $r \geq (1-\epsilon_2)d$ we have that $|C^{(d)}_{r}| \leq 2^de^{-(1/2-\epsilon_2)^2d/2}$ and for any $(a,b) \in (S,T)_r$ we have $|\widehat{m}(S,T)| \geq |\widehat{m}(a,b)| \geq {r \choose r/2} \approx 2^r/\sqrt{r}$.
So the stronger bound comes from 
$$ |(\widehat{m}(S,T), \widehat{m}(S,T))_r| \geq \frac{2^r/\sqrt{r}}{2^de^{-(1/2-\epsilon_2)^2d/2}}|\widehat{m}(S,T) \times C^{(d)}_{r}| \geq 2^{r-d}e^{d(1/2-\epsilon_2)^2/2}|(S,T)_r|.$$
By setting $\delta,\epsilon_2$ as a function of $\epsilon_1$, we have that $2^{-\epsilon_2d}e^{d(1-2\epsilon_2)^2/8} > e^{(1+\delta)d(1-\epsilon_1)^2/(8(1-\epsilon_2))}$.
\end{proof}

\subsection{Strong displacement convexity}\label{strong curve sec}

\begin{theorem} \label{char strong curve}
If $G$ is a graph with nonnegative strong displacement convexity, then $G$ is a path, a cycle, a complete graph, or a complete graph minus an edge, and $G$ has strong displacement convexity $0$.
\end{theorem}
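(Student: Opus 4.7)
The plan is to establish the classification by finding local substructures that obstruct strong displacement convexity, then showing that graphs avoiding these substructures must lie in the listed families. The key leverage is that strong displacement convexity requires the entropy inequality (\ref{entropy eqn}) to hold for \emph{every} optimal transportation and \emph{every} choice of geodesic distributions $Q_{\alpha,\beta}$, so a single bad configuration suffices to rule out nonnegativity.

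First, I would generalize Example \ref{negative curvature} to any graph containing an induced claw $K_{1,3}$ with center $c$ and leaves $u_1, u_2, u_3$. Set $\mu_A = \delta_{u_1}$ and $\mu_B = (\delta_{u_2} + \delta_{u_3})/2$. The singleton support of $\mu_A$ forces the optimal transportation $\tau(u_1, u_i) = 1/2$ for $i \in \{2,3\}$, and since the pairs $(u_1, u_2)$ and $(u_1, u_3)$ both have distance $2$ with $c$ as a common midpoint, I can concentrate $Q_{u_1, u_i}$ at $c$, yielding $\mu_C = \delta_c$ with entropy zero. The average entropy $(\ln 2)/2$ is positive, so (\ref{entropy eqn}) fails for every $K \geq 0$. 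Slightly more generally, whenever a vertex $u$ has two distinct vertices $v, w$ at distance $2$ from $u$ sharing a common midpoint $x$, the same construction applied to $\mu_A = \delta_u$, $\mu_B = (\delta_v + \delta_w)/2$ yields a violation; this handles obstructions beyond the induced claw (such as the bull graph or $K_4$ with a pendant).

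Next, I would prove the converse: a connected graph $G$ avoiding both obstructions must be a path, cycle, $K_n$, or $K_n - e$. If the maximum degree is at most $2$, then $G$ is a path or cycle. Otherwise some vertex $v$ has degree at least $3$, and claw-freeness forces the complement of $G[N(v)]$ to be a matching. The common-midpoint obstruction, applied to the distance-$2$ pairs arising from the non-edges of $G[N(v)]$ together with any vertex outside $N(v) \cup \{v\}$, then restricts the global structure of $G$ severely. I expect this structural step to be the main technical obstacle, since it will require tracking how missing edges inside and outside neighborhoods can coexist without generating a second distance-$2$ vertex sharing a midpoint; the analysis should eventually collapse to either no missing edges (forcing $K_n$) or a single missing edge (forcing $K_n - e$).

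Finally, to establish that each listed family achieves strong displacement convexity exactly $0$, take any edge $uv$ and set $\mu_A = \delta_u$, $\mu_B = \delta_v$. Then $H(\mu_A) = H(\mu_B) = 0$, $W^2(\mu_A, \mu_B) = 1$, and since $d(u,v) = 1$ is odd, $\widetilde{m}(u,v)$ consists of the single edge $(u,v)$, so $\mu_C$ is a point mass with entropy $0$. Inequality (\ref{entropy eqn}) then reduces to $0 \geq \frac{K}{2} t(1-t)$ for all $t$, which forces $K \leq 0$; combined with the standing hypothesis $K \geq 0$, we conclude $K = 0$.
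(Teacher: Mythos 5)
Your obstruction---an induced claw, or more generally a vertex $u$ with two distinct vertices at distance~$2$ from $u$ sharing a common midpoint $x$---is exactly the special case of the paper's Lemma~\ref{missing one edge at most} in which one side of the transportation is a point mass ($u_1 = w_1$). The paper's lemma is strictly stronger: it takes $\mu_A$ uniform over $\{u_1,w_1\}$ and $\mu_B$ uniform over $\{u_2,w_2\}$, with all four vertices possibly distinct, and concludes that the complement of $G[N(v)]$ contains \emph{at most one} edge for every $v$. Your obstruction concludes only that this complement is a matching (maximum degree $1$). That difference is fatal to the structural step you defer. The cocktail party graph $K_{2n}$ minus a perfect matching (for $n\geq 3$), and more generally $K_n$ minus two or more pairwise-disjoint edges, have the complement of every vertex-neighborhood equal to a matching of two or more edges; your obstruction never fires on these graphs, yet none of them is a path, cycle, complete graph, or complete graph minus a single edge. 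So "the analysis should eventually collapse to either no missing edges or a single missing edge" is not a matter of bookkeeping that can be filled in later: the obstruction you proposed is too weak to force that collapse, and the lemma with both sides supported on two points is essential.

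A secondary observation: once $u_1,u_2,w_1,w_2$ are all distinct and the cross-pairs $u_1w_2$, $w_1u_2$ are edges (which is precisely the situation in $K_n$ minus disjoint edges), the transportation $\tau(u_1,u_2) = \tau(w_1,w_2) = \tfrac12$ has $W^2 = 4$ while the crossing transport has $W^2 = 1$, so the optimality assertion in the paper's proof of Lemma~\ref{missing one edge at most} also requires care in exactly the regime your argument misses. This does not rescue your approach; it means the full two-point lemma, correctly handled, is what is needed and you do not supply it. Your closing step---forcing $K\le 0$ from point masses on an endpoint pair of a single edge, hence $K=0$ under the standing hypothesis---is correct and agrees with the paper.
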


First we will discuss the curvature of paths and cycles, and then we will show that no other graph can have non-negative curvature.

\begin{lemma}
The path and the cycle have strong displacement convexity $0$.
\end{lemma}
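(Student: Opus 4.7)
The plan is to show that strong displacement convexity holds with $K=0$, i.e.
$$H(\mu_t) \geq t\,H(\mu_A) + (1-t)\,H(\mu_B)$$
for every optimal transport $\tau$ and every geodesic choice $Q_{\alpha,\beta}$, and that $K$ cannot be positive. First I would reduce to the case of optimal transports without a partition. The proof of Lemma \ref{induct via partition} only uses convexity of $-x\ln x$, and the same argument goes through verbatim if the common coefficient $C$ is replaced by the two separate coefficients $t$ and $1-t$ in front of $S(\mu_A)$ and $S(\mu_B)$, with $k=2$ and $f \equiv 0$. Combined with Claim \ref{optimal broken up is optimal}, this reduces the problem to establishing the inequality for every optimal transport with no partition.

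Next I would apply Lemma \ref{everybody is large} to such a $\tau$, obtaining a constant $D$ with $d(a,b) = D$ for every pair in the support of $\tau$. On a path $P_n$ this forces $b = a \pm D$, and I would use cyclical monotonicity to rule out a pair of opposite directions: if $(a_1, a_1 + D)$ and $(a_2, a_2 - D)$ both appear with $a_1 < a_2$, then two-pair cyclical monotonicity gives $2D^2 \leq 2(a_2 - a_1 - D)^2$, forcing $a_2 - a_1 \geq 2D$; but then the midpoint sets $\widetilde{m}(a_1, a_1 + D)$ and $\widetilde{m}(a_2 - D, a_2)$ lie in disjoint intervals and form a partition, contradicting the reduction. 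The boundary $a_2 - a_1 = D$ would be a swap of mass between two common vertices, which contradicts optimality since the identity transport on the overlapping mass is strictly cheaper. Hence all pairs translate in the same direction, and $\mu_B$ is an isometric shift of $\mu_A$. The same argument handles $C_n$ when $D < n/2$ (geodesic unique), and when $D = n/2$ the two rotations coincide so $\mu_B$ is again an isometric shift of $\mu_A$.

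For any choice of $Q_{\alpha,\beta}$ and any convention for distance interpolation, the $t$-midpoint contribution of a pair $(a, a+D)$ is supported on at most the two vertices $a + \lfloor tD \rfloor$ and $a + \lceil tD \rceil$. Summing over $a$, the distribution $\mu_t$ is therefore a convex combination of at most two isometric translates of $\mu_A$, each having entropy $H(\mu_A) = H(\mu_B)$. By concavity of the entropy functional,
$$H(\mu_t) \geq H(\mu_A) = tH(\mu_A) + (1-t)H(\mu_B),$$
which is strong displacement convexity with $K = 0$. To see that $K=0$ is sharp, I would take $\mu_A = \delta_a$, $\mu_B = \delta_b$ with $a \neq b$: every $\mu_t$ is a point mass, so all entropies vanish while $W^2(\mu_A,\mu_B) > 0$, forcing $K \leq 0$ in the displacement-convexity inequality.

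The main obstacle is the ``no crossing'' step on the cycle near $D = n/2$, where the wrap-around geometry makes cyclical monotonicity more delicate than on the path. I anticipate handling this by pulling back to the universal cover (the infinite path) when $D < n/2$, so that the path argument applies to the lifted pairs, and treating the antipodal case $D = n/2$ separately by noting that in that case the two possible rotations coincide, so every geodesic choice yields the same family of shifts of $\mu_A$ and the concavity step applies unchanged.
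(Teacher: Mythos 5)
Your skeleton---reduce to partition-free optimal transports via Lemma \ref{induct via partition}, then invoke Lemma \ref{everybody is large}---is the same as the paper's, but you stop short of the structural observation that makes the paper's proof essentially one line: on a path or a cycle, two pairs at the same distance $D$ have intersecting midpoint sets $\widetilde{m}$ only if they are identical or reversed, and the reversed configuration violates cyclical monotonicity ($2D^2 \leq 0$). Hence ``no partition'' (connectivity of the midpoint-intersection graph on the support of $\tau$) collapses the support to a \emph{single} pair, so $\mu_A,\mu_B$ are point masses and the $K=0$ inequality is trivial because all endpoint entropies vanish. Since you only extract the weaker conclusion ``all pairs translate the same way, so $\mu_B$ is a shift of $\mu_A$,'' you are left proving the entropy inequality for genuinely spread-out shifted measures, and that is where your argument has a real hole: in the antipodal case $D=n/2$ on an even cycle each pair has two geodesics, the weights $Q_{\alpha,\beta}$ may differ from pair to pair, and for $0<t<1$ the two geodesics of $(a,a+n/2)$ give \emph{different} $t$-points; so $\mu_t$ is a mixture $\sum_a \mu_A(a)\nu_a$ with $a$-dependent $\nu_a$, not a convex combination of finitely many translates of $\mu_A$ with common weights, and ``the concavity step applies unchanged'' is unjustified. (That case is in fact vacuous beyond point masses---the antipodal coupling of a non-degenerate measure is never cyclically monotone---but this is exactly the point-mass collapse you did not prove.)

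Two smaller issues point to the same fix. Your direction-ruling step only treats opposite-direction pairs with $a_1<a_2$, and the jump from ``these two midpoint sets are disjoint'' to ``they form a partition'' implicitly needs that no third pair can bridge them in the midpoint-intersection graph; on a path or cycle this holds precisely because distinct non-reversed pairs of equal length never share a midpoint---the very fact that, used directly, yields point masses and makes the translation-plus-concavity machinery unnecessary. Likewise, in your sharpness argument ``every $\mu_t$ is a point mass'' fails when $tD$ is not an integer (or when two geodesics exist); pick, say, $t=1/2$ and an even $D<n/2$ to force $H(\mu_{1/2})=0$ and conclude $K\leq 0$.
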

\begin{proof}
We assume that our optimal transportation $\tau$ has no partition, and then use Lemma \ref{induct via partition} to handle the other case.
But Lemma \ref{everybody is large} can only be satisfied on the path or cycle for point masses for $\mu_A, \mu_B$.
But then $S(\mu_C) \geq S(\tau) \geq \max\{S(\mu_A),S(\mu_B)\}$.
\end{proof}

\begin{lemma} \label{missing one edge at most}
If $G$ is a graph with nonnegative strong displacement convexity, then for any $v \in V(G)$ and $u_1, u_2, w_1, w_2 \in N(v)$ such that $(u_1, u_2) \neq (w_1, w_2)$ we have that $u_1u_2 \in E(G)$ or $w_1w_2 \in E(G)$.
\end{lemma}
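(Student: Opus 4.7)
The strategy is proof by contradiction: assume $u_1u_2, w_1w_2 \notin E(G)$ and exhibit probability measures $\mu_A, \mu_B$, an optimal transportation $\tau$, and geodesic choices $Q_{\alpha,\beta}$ that violate the entropy inequality (\ref{entropy eqn}) even with $K=0$; that is, for which concavity of entropy along distance interpolation fails. The argument splits on the intersection $\{u_1, u_2\} \cap \{w_1, w_2\}$.

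If the pairs share a vertex, say $u_1 = w_1$, then the three distinct neighbors $u_1, u_2, w_2$ of $v$ satisfy $u_1 u_2 \notin E$ and $u_1 w_2 \notin E$, so $d(u_1, u_2) = d(u_1, w_2) = 2$ with $v$ a common length-two midpoint. I would take $\mu_A = \delta_{u_1}$ and $\mu_B = \tfrac{1}{2}(\delta_{u_2} + \delta_{w_2})$. Because $\mu_A$ is a point mass the transportation is forced to be $\tau(u_1, u_2) = \tau(u_1, w_2) = \tfrac{1}{2}$ with $W^2 = 4$, and because each of these length-two geodesics is not unique I may pick $Q$ to route both pairs through $v$. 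The resulting $\mu_{1/2} = \delta_v$ has entropy zero, contradicting the concavity bound $H(\mu_{1/2}) \geq \tfrac{1}{2}(H(\mu_A) + H(\mu_B)) = \tfrac{\ln 2}{2} > 0$. After relabelling, the same construction also covers the disjoint-pair case whenever some cross pair $u_i w_j$ is a non-edge, since then $u_i$ has two non-neighbors $\{u_{3-i}, w_j\} \subseteq N(v)$ and one reduces to the previous setup.

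The remaining sub-case---four distinct neighbors $u_1, u_2, w_1, w_2$ with all four cross pairs $u_1 w_1, u_1 w_2, u_2 w_1, u_2 w_2$ present as edges, so that $\{u_1, u_2, w_1, w_2\}$ induces a 4-cycle around $v$ and the ball of radius one at $v$ contains a wheel $W_4$---is the main obstacle. In this configuration the symmetric two-atom measures on the two diagonals admit an optimal transportation of cost one that routes entirely along the cross edges, so by Claim~\ref{optimal broken up is optimal} and Lemma~\ref{induct via partition} the transport factors into trivial single-edge pieces and no mass is forced across a length-two diagonal; the midpoint-collapse at $v$ therefore does not occur automatically. My plan here is to tilt the marginals---for example by placing asymmetric weights $a\delta_{u_1}+(1-a)\delta_{w_2}$ and $a\delta_{u_2}+(1-a)\delta_{w_1}$ with $a$ large enough that the linear program for $W^2$ forces at least $\alpha=2a-1>0$ units of mass along one diagonal, and possibly adjoining a small auxiliary atom at $v$. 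Choosing the geodesic through $v$ for that diagonal sub-transport then pins a deterministic fraction of $\mu_{1/2}$ at $v$, while the remaining length-one components contribute entropy split evenly across edge endpoints by distance interpolation. Comparing the resulting $H(\mu_{1/2})$ with $\tfrac{1}{2}(H(\mu_A)+H(\mu_B))$ and letting $a\to 1$ (so that $\mu_A,\mu_B$ concentrate while still forcing diagonal mass) should yield the desired strict entropy drop and complete the contradiction.
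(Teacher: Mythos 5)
Your handling of the shared-vertex case and the reduction of the disjoint-pair case when some cross pair $u_iw_j$ is a non-edge are both correct and essentially the paper's argument: the paper takes $\mu_A$ uniform on $\{u_1, w_1\}$ and $\mu_B$ uniform on $\{u_2, w_2\}$, which degenerates to your point-mass construction when $u_2 = w_2$, and your relabelling step for a single missing cross edge is sound. (A small point: the non-uniqueness of the geodesics is irrelevant --- under \emph{strong} displacement convexity you may always choose $Q_{\alpha,\beta}$ to concentrate on the geodesic through $v$.)

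The case you flag as the obstacle --- four distinct $u_1, u_2, w_1, w_2 \in N(v)$ with $u_1u_2, w_1w_2 \notin E$ but all four cross pairs in $E$, so $N(v)$ contains an induced $C_4$ --- is a genuine gap, and you should know that the paper's proof does not handle it either. The paper asserts that ``there exists an optimal transportation from $\mu_A$ to $\mu_B$ such that the only midpoint is $v$,'' but in this configuration the cross matching $\tau(u_1, w_2) = \tau(w_1, u_2) = \frac12$ has $W^2 = 1$, strictly beating the diagonal's $W^2 = 4$; the unique optimal transport sends all mass along edges, and its midpoints are the edges $u_1w_2$ and $w_1u_2$, not the vertex $v$. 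So the paper's claim is false here.

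More importantly, your tilted-marginals plan cannot close the gap, and neither can any local argument of this shape. The entropy drop at $t=\frac12$ requires two pairs in the optimal transport to share a midpoint element, but cyclic monotonicity forbids it in the $C_4$ configuration: if $\tau(a_1,b_1), \tau(a_2,b_2) > 0$ are two distinct distance-$2$ pairs sharing a midpoint vertex $z$, all four points lie in $N(z)$, and optimality forces $d(a_1,b_2)^2 + d(a_2,b_1)^2 \geq 8$, so both $a_1b_2$ and $a_2b_1$ are non-edges --- contradicting the all-cross-edges assumption. The same argument rules out $\tau(z,z)>0$ coexisting with a distance-$2$ pair routed through $z$. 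With no collision, $\mu_C$ inherits the atom structure of $\tau$ (the geodesic choice $Q$ can only further split mass), so $H(\mu_C) \geq H(\tau) \geq \max\{H(\mu_A), H(\mu_B)\} \geq \frac12\bigl(H(\mu_A)+H(\mu_B)\bigr)$, and the $K=0$ inequality holds. In other words, in the $C_4$ case the method used by both you and the paper is structurally barred from producing a violation. The octahedron $K_{2,2,2}$ realizes exactly this configuration at every vertex, and deserves a hard look before accepting the lemma (and Theorem \ref{char strong curve}, whose final step invokes the lemma in precisely this disjoint-pair setting).
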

\begin{proof}
By way of contradiction, suppose that $u_1, u_2, w_1, w_2 \in N(v)$, $u_1 \neq w_1$, and $u_1u_2,w_1w_2 \notin E(G)$.
Then let $\mu_A$ be the uniform distribution over $\{u_1, w_1\}$, and let $\mu_B$ be the uniform distribution over $\{u_2, w_2\}$ (which may be one or two points).
There exists an optimal transportation from $\mu_A$ to $\mu_B$ such that the only midpoint is $v$.
\end{proof}

\textit{Proof of Theorem \ref{char strong curve}.}
If $G$ is connected and every vertex has degree $1$ or $2$, then $G$ is a path or a cycle.
So suppose $v$ is incident with at least $3$ edges.
If $V(G) \subseteq N(v) \cup \{v\}$, then by Lemma \ref{missing one edge at most}, $G$ is either the complete graph or the complete graph minus an edge.
So assume there exists a $w$ such that $d(v,w) = 2$; equivalently $vw \notin E(G)$ and $N(v) \cap N(w) \neq \emptyset$.

If $N(v) \setminus N(w) \neq \emptyset$, then because $d(v)\geq 3$ and by Lemma \ref{missing one edge at most} there exists an $xy \in E(G)$ such that $x \in  N(v) \setminus N(w)$ and $y \in N(v) \cap N(w)$.
But this contradicts Lemma \ref{missing one edge at most}, because $N(y)$ is missing edges $vw$ and $xw$.
So assume that $N(v) \subseteq N(w)$, and by symmetry this implies $N(v) = N(w)$.

Now we claim that $\{v,w\} \cup N(v) = V(G)$.
If this is not true, then there exists a $x \in N(v)$ and a $y \notin \{v,w\} \cup N(v)$ such that $xy \in E(G)$.
But this contradicts Lemma \ref{missing one edge at most}, because $N(x)$ is missing edges $yv$ and $yw$.

So $N(v) = N(w) = V(G) - \{v,w\}$ and $vw \notin E(G)$.
If $G$ is not the complete graph minus an edge, then there exists a $xy \notin E(G)$.
By Lemma \ref{missing one edge at most}, $E(G)$ is all pairs of points except $vw$ and $xy$.
Because $d(v) \geq 3$, there exists a $z \in N(v) \setminus \{x,y\}$.
But this contradicts Lemma \ref{missing one edge at most}, because $N(z)$ is missing edges $xy$ and $vw$.
\hfill $\square$

\subsection{Other graphs}\label{and now to my field}
A typical graph will not have curvature for the same reasons that the hypercube does not: by examining the neighborhood of a single vertex.
The $-2/3$ in Theorem \ref{almost curved} is necessary for transportations $\tau$ with $W^2(\tau) = 1$ due to the fact that $\ln(|C_1|) = 0$.
However, we may be able to prove a ``rough geometry'' version of curvature.
That is, the curvature equation may exist with a small error term that accounts for transportations with small Wasserstein distances.
We establish a first result towards this goal by showing that expander graphs have some flavor of positive curvature.

\begin{theorem}
Let $G$ be a graph such that every vertex has degree $d$ and $\lambda \ll 1$ is the second largest eigenvalue of the normalized adjacency matrix.
Let $S,T$ be vertex sets such that $|S| = |T|$.
If $|S||T|e^{d(S,T)\ln(1+\lambda)} > \lambda n^2/4(1 + O(1))$, then $|m(S,T)| \geq \theta(\lambda n)$.
\end{theorem}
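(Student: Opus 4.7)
The plan is to count shortest $S$-to-$T$ walks by spectral expansion and then pigeonhole onto distinct midpoints. Set $r = d(S,T)$.

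First I would observe that any length-$r$ walk $s = v_0, v_1, \ldots, v_r = t$ from $s \in S$ to $t \in T$ is automatically a shortest $s$-to-$t$ path: the inequalities $d(s,v_i) \leq i$ and $d(v_i,t) \leq r-i$ combined with $d(s,t) = r$ force equality throughout, so the midpoint of the pair $(s,t)$ lies at step $\lfloor r/2 \rfloor$ (or is the edge $(v_{\lfloor r/2 \rfloor}, v_{\lceil r/2 \rceil})$ when $r$ is odd). Second, I would count these walks spectrally: if $A = dM$ and the nontrivial eigenvalues of $M$ satisfy $|\mu_i| \leq \lambda$, then expanding $\langle \mathbf{1}_S, A^r \mathbf{1}_T\rangle$ in the eigenbasis of $M$ and applying Cauchy-Schwarz to the non-principal terms gives
$$ W_r(S,T) \geq \frac{d^r |S||T|}{n} - (\lambda d)^r \sqrt{|S||T|}. $$
Third, a pigeonhole: every midpoint lies on at most $d^r$ length-$r$ walks from $S$ to $T$ (bound separately the $\leq d^{\lceil r/2\rceil}$ walks from $S$ ending at the midpoint and the $\leq d^{\lfloor r/2\rfloor}$ walks from the midpoint ending in $T$), so
$$ |\widehat{m}(S,T)| \geq \frac{W_r(S,T)}{d^r} \geq \frac{|S||T|}{n} - \lambda^r \sqrt{|S||T|}. $$

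Finally I would apply the hypothesis $|S||T|(1+\lambda)^r > \lambda n^2/4 \cdot (1+O(1))$, which forces $|S||T|/n \geq \lambda n / (4(1+\lambda)^r) (1 + O(1))$. This main term remains of order $\lambda n$ whenever $\lambda r = O(1)$, and in the expander regime $\lambda \ll 1$ the standard diameter bound $r \leq O(\log n/\log(1/\lambda))$ already guarantees $\lambda r = o(1)$. The correction $\lambda^r \sqrt{|S||T|}$ is smaller than the main term by a factor $\lambda^{r-1/2}(1+\lambda)^{-r/2} = o(1)$ for $\lambda \ll 1$ and $r \geq 1$, so $|\widehat{m}(S,T)| \geq \theta(\lambda n)$ follows.

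The chief obstacle is the crudeness of the $d^r$ pigeonhole in step three: shortest-path walks from $S$ to $T$ are far rarer than arbitrary length-$r$ walks, yet the bound treats them uniformly. A tighter argument would spectrally count shortest paths themselves (say, by restricting $A^{\lfloor r/2 \rfloor}$ to the distance-$\lfloor r/2\rfloor$ sphere around $S$), but this introduces non-trivial technicalities. Since the target bound $\theta(\lambda n)$ is rather coarse, the loose pigeonhole already suffices.
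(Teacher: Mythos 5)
Your approach has a fatal cancellation in step three that prevents it from exploiting the distance $r = d(S,T)$ at all, and this is precisely the content the hypothesis is designed to buy. After the pigeonhole you have
$$ |\widehat{m}(S,T)| \;\geq\; \frac{W_r(S,T)}{d^r} \;\geq\; \frac{|S||T|}{n} - \lambda^r\sqrt{|S||T|}, $$
and the leading term $|S||T|/n$ carries no $r$-dependence whatsoever: the spectral walk count gives a factor $d^r$ and the pigeonhole divides it right back out. The theorem's hypothesis, however, is $|S||T|\,e^{r\ln(1+\lambda)} > \lambda n^2/4\,(1+O(1))$, which permits $|S||T|$ to be far smaller than $\lambda n^2$ provided $r$ is large; in that regime $|S||T|/n \ll \lambda n$ and your bound is vacuous. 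Your patch — claiming $\lambda r = o(1)$ via the diameter bound $r = O(\log n / \log(1/\lambda))$ — gives $\lambda r = O(\lambda\log n/\log(1/\lambda))$, which is \emph{not} $o(1)$ across the whole range $\lambda \ll 1$ (e.g.\ $\lambda = (\log n)^{-1/2}$ sends it to infinity). And even where $\lambda r = o(1)$ does hold, the exponential factor becomes $1 + o(1)$, so you are really just proving the statement under the stronger hypothesis $|S||T| \gtrsim \lambda n^2$; the distance plays no role, and the theorem loses its point.

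The paper's argument is structurally different and is engineered so that the gain from $r$ survives. Rather than counting walks, it \emph{inflates} the sets: using the spectral vertex-expansion bound $e(S^i, S^{i+1}\setminus S^i) \geq d|S^i|(1-\lambda)(n-|S^i|)/n$, one gets $|S^{i+1}| \geq |S^i|(1 + (1-\lambda)/2)$ as long as $|S^i| \leq n/2$, hence $|S'| = |S^{r/2}|$ grows by a factor $(1.5 - \lambda/2)^{r/2}$, and likewise for $T'$. The Expander Mixing Lemma is then applied to $S'$ and $T'$ — not to $S, T$ — giving $|m(S,T)| \geq |S'||T'|/n - \lambda\sqrt{|S'||T'|}$. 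Since $|S'||T'| \geq |S||T|(1.5-\lambda/2)^r$, the exponential-in-$r$ gain persists in the final bound, and this is exactly what the hypothesis feeds into. (Each vertex of $S'$ with a neighbor in $T'$ is an approximate midpoint, and there are at least $e(S',T')/d$ of them.) To salvage your route you would need to count geodesics rather than all length-$r$ walks, which is the tightening you flag as an ``obstacle'' at the end — but that obstacle is not optional here; without it the pigeonhole erases the very quantity the theorem is about.
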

\begin{proof}
We define $S^i = \{v : d(v,S) \leq i\}$.
For disjoint vertex sets $X,Y$, $e(X,Y)$ denote the number of edges with an endpoint in each of $X$ and $Y$.
We will make use of two theorems from spectral graph theory.
The first is Corollary 5.5 of \cite{C}, which states that $e(S^i, S^{i+1}-S^i) \geq d|S^i|(1-\lambda)\frac{n-|S^i|}{n}$.
The second is the Expander Mixing Lemma (see Theorem 5.1 of \cite{C}), which states that 
$$ \left| e(X,Y) - d |X||Y|/n\right| \leq \lambda d \sqrt{|X||Y|}. $$ 

Each vertex is adjacent to $d$ edges,  and therefore $d|S^{i+1}-S^i| \geq e(S^i, S^{i+1}-S^i)$.
So if $2|S^i| \leq n$, then $|S^{i+1}| \geq |S^i|(1 + (1 - \lambda)/2)$ (note that $\lambda < 1$).
Let $S' = S^{d(S,T)/2}$ and $T' = T^{d(S,T)/2}$, and thus 
$$|S'||T'| \geq \min\{n^2/4, |S||T|e^{d(S,T)\ln(1.5 - \lambda/2)}\} $$
$$> \left(((1+\epsilon)\sqrt{\lambda} + \lambda)\frac{n}2\right)^2 = \lambda n^2/4(1 + O(1)).$$

Next, each vertex in $S'$ with a neighbor in $T'$ is a midpoint in $m(S,T)$.
Each vertex is adjacent to $d$ edges, and so $|m(S,T)| \geq |E(S',T')|/d$.
Therefore $|m(S,T)| \geq |S'||T'|/n - \lambda \sqrt{|S'||T'|} = \frac{1}{n}\left(\sqrt{|S'||T'|} - \sqrt{\lambda} n/2 \right)^2 - \lambda n/4$.
The theorem then follows from the bound on $|S'||T'|$.
\end{proof}

We are interested in determining whether the assumption on a fixed degree is necessary.
Using the normalized Laplacian, there exist generalizations of expander graphs for arbitrary degree distributions \cite{CG}.
However, it may be that even rough positive curvature will only exist when the degree distribution falls into a tight range.
A \emph{power-law} graph is a graph whose degree distribution approximately follows the distribution of the inverse of a polynomial (and hence are widely skewed).
Such graphs have become popular recently for their ability to model social, technical, and biological networks.
Models for such graphs include Kronecker graphs (used for Graph500).

\begin{conjecture}
It is impossible for a power-law graph to have ``big picture'' curvature.
\end{conjecture}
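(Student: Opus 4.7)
The plan is to refute positive ``big-picture'' Brunn-Minkowski curvature in a power-law graph by exhibiting, with high probability, pairs of large and distant vertex sets whose midpoint set is too small to satisfy the required exponential lower bound. The guiding intuition is that power-law graphs are dominated by a hub-and-spoke structure: a sublinear core of high-degree vertices carries essentially all the shortest-path traffic, while a positive fraction of the periphery consists of degree-one vertices and shallow trees. Any meaningful curvature inequality at the scale of the diameter would require exponentially many routing options between distant sets, and the bottleneck through the core makes this impossible.

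First I would fix a precise formalization of the conjecture. Writing $D = \diam(G)$, a natural definition of ``positive big-picture curvature $K$'' asks that $|\widehat{m}(A,B)| \geq \sqrt{|A||B|}\, e^{K d_*(A,B)^2 / D}$ whenever $d_*(A,B) \geq \epsilon D$ and $|A|,|B| \geq n^{\beta}$, for some fixed $\epsilon, \beta, K > 0$; allowing a subpolynomial slack $n^{o(1)}$ would model the ``rough geometry'' weakening hinted at in the preceding subsection. Next I would identify a vertex $v$ (a hub or an articulation point just below a hub) whose removal separates a peripheral component $L$ of depth $\Omega(D)$ that still contains $\Omega(n^{\beta})$ leaves; preferential-attachment and configuration-model analyses provide such a $v$ with high probability. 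I would then partition $L$ into sets $A, B$ lying in two different subtrees of $L$ at comparable depth, forcing every $A$-to-$B$ geodesic to pass through $v$. This localizes $\widehat{m}(A,B)$ inside a ball of radius $O(1)$ about $v$, whose size is bounded by the maximum degree and is therefore sublinear in $n$. Taking $|A|,|B|$ as close to $n^{1-o(1)}$ as $L$ permits and $d_*(A,B) = \Theta(D)$ makes the required exponential factor $n^{\Theta(1)}$, so $|\widehat{m}(A,B)|$ would have to be superlinear in $n$, a contradiction.

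The main obstacle is that ``power-law graph'' is a family of models, not a single object, and the separator argument above is cleanest in models that retain a tree-like periphery (preferential-attachment trees and sparse Barab\'asi--Albert variants). In denser models such as Chung--Lu or Kronecker, the peripheral mass is distributed across many hubs and no single-vertex separator carries the geodesic flow. For those, I would take the spectral route: combine the implication from positive coarse Ricci curvature to a bound on the spectral gap \cite{BCLL} with the fact that power-law graph spectra are top-heavy, with leading eigenvalues governed by the hubs and hence incompatible with the uniform concentration that curvature would force. A fully model-independent proof would likely require introducing a bottleneck or isoperimetric ratio that captures the hub-periphery dichotomy across all power-law models and showing directly that Brunn-Minkowski curvature forces this ratio to be bounded away from what the power law dictates; producing such a universal obstruction is where I expect the real difficulty to lie.
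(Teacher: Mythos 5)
This statement is an open \emph{conjecture} in the paper, not a theorem, so there is no proof of it in the paper for your attempt to be compared against. The author writes immediately after it that ``we provide some evidence for our conjecture by studying random walks on arbitrary power-law graphs,'' and the two theorems that follow (on weighted random shortest paths, showing the probability that $z$ is a midpoint is proportional to $d(z)$) are offered only as heuristic support: they suggest that midpoints concentrate on the rare high-degree hubs, which is consistent with midpoint sets being too small for positive curvature, but they do not establish any inequality of the Brunn--Minkowski form.

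Your proposal is a genuinely different line of attack from the paper's evidence, and parts of it are promising, but it is not a proof and you say so yourself. Several gaps are substantive rather than cosmetic. First, ``power-law graph'' and ``big-picture curvature'' are both left informal by the paper, and your attempt to pin them down exposes the difficulty: the curvature inequality you posit uses $d_*(A,B)^2/D$, but in most power-law models the diameter $D$ is only $O(\log n)$ or even $O(\log\log n)$, so whether the exponential factor is polynomially large in $n$ depends delicately on the normalization you choose, and a different but equally defensible normalization could make your contradiction evaporate. Second, the separator argument needs a vertex $v$ whose removal isolates a component $L$ of depth $\Omega(D)$ containing $\Omega(n^\beta)$ leaves that can be split into two subtrees at \emph{comparable} depth; this is far from automatic in preferential-attachment or configuration models, where peripheral trees are shallow and the bulk of the mass sits near the hubs. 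Even if such $A,B$ exist, the midpoints only concentrate near $v$ if $d(a,v)$ and $d(b,v)$ are nearly equal across all pairs, which requires an additional balancing argument you have not supplied. Third, you explicitly concede that the separator route fails for Chung--Lu and Kronecker models and substitute a spectral heuristic (positive curvature forces a spectral gap, power-law spectra are hub-dominated), but the cited implication runs from coarse Ricci or Bakry--\'Emery curvature to $\lambda$, not from Brunn--Minkowski curvature, and the paper itself (Example \ref{negative curvature}) shows these curvature notions can disagree even on the hypercube; so the spectral leg of your argument rests on an implication that has not been established. A universal obstruction covering all power-law models, which is what the conjecture really asks for, remains open, and your proposal does not close that gap.
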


The conjecture would imply that positively curved networks will not be useful for social networks, as negative curvature has been.
On the other hand, it may still be useful to study engineered networks (for example, super computing clusters frequently use product topologies, such as the discrete torus).
Chung, Lu, and Vu \cite{CLV} studied the spectral properties of a random power-law graph.
We provide some evidence for our conjecture by studying random walks on arbitrary power-law graphs.

\begin{theorem} \label{random power law theorem 1}
Suppose we weight a path $P_{xy} = x,u_1, u_2, \ldots, u_k,y$ as $w(P_{xy}) = \prod_{i=1}^k \frac{1}{d(u_i)}$.
If we pick a random shortest path $P_{xy}$, where $P_{xy}$ is picked proportional to $w(P_{xy})$ across all pairs $x,y$, then the probability that $z$ is the midpoint of $P_{xy}$ is proportional to $d(z)$.
\end{theorem}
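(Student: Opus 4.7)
The plan is to recognize that $w(P_{xy})$ is, up to a uniform normalization, the probability that a simple random walk on $G$ started from the stationary distribution traces the path $P_{xy}$. Writing $\pi(v) = d(v)/(2|E(G)|)$ for the stationary distribution, the probability that a stationary random walk traces $x, u_1, \ldots, u_k, y$ equals $\pi(x) \cdot d(x)^{-1} \prod_{i=1}^{k} d(u_i)^{-1} = w(P_{xy})/(2|E(G)|)$, since $\pi(x)/d(x) = (2|E(G)|)^{-1}$. Thus sampling $P_{xy}$ proportional to $w(P_{xy})$ is the same as sampling the starting endpoint from $\pi$ and running a random walk of the appropriate length, conditioned on being a shortest path.

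The second step is to factor the weight at the midpoint. For a shortest path of length $2m$ with midpoint $z$, $z$ is an internal vertex of $P_{xy}$ but an endpoint of both half-paths $P_{xz}$ and $P_{zy}$, so $w(P_{xy}) = w(P_{xz}) \cdot d(z)^{-1} \cdot w(P_{zy})$. Summing over all shortest paths of length $2m$ with midpoint $z$ and over the endpoints $x,y$ at distance $m$ from $z$ gives
$$ \sum_{x,y}\sum_{\text{s.p.\ } P_{xy} \text{ through } z} w(P_{xy}) \;=\; \frac{1}{d(z)} \Big( \sum_{x} \sum_{\text{s.p.\ } P_{xz}} w(P_{xz}) \Big) \Big( \sum_{y} \sum_{\text{s.p.\ } P_{zy}} w(P_{zy}) \Big). $$
Reversibility of the random walk, combined with the observation above, gives $\sum_{\text{s.p.\ } P_{zx}} w(P_{zx}) = d(z) \cdot Q_m(z,x)$, where $Q_m(z,x)$ is the probability that an $m$-step random walk from $z$ follows a shortest path ending at $x$. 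Summing over $x$ yields $d(z) \cdot Q_m(z)$, where $Q_m(z)$ is the total probability that an $m$-step walk from $z$ is a shortest path. The displayed product therefore equals $d(z) \cdot Q_m(z)^2$, which exhibits the claimed factor $d(z)$ explicitly.

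The main obstacle is the residual factor $Q_m(z)^2$. In a regular graph $Q_m(z)$ depends only on $m$, and the proportionality $\propto d(z)$ is exact; in the locally tree-like setting characteristic of power-law graphs the factor is essentially uncorrelated with $d(z)$, so the $d(z)$ dependence is the dominant behavior. I would conclude by summing over all admissible $m$, noting that the resulting $z$-dependent correction $\sum_{m} Q_m(z)^2$ is controlled by the local structure around $z$ rather than by its degree, so that the midpoint distribution inherits the $d(z)$ scaling of the stationary distribution. This matches the spirit in which the theorem is invoked as evidence for the conjecture: vertices of large degree attract an amount of random-walk midpoint mass proportional to their degree, precisely the obstruction to positive curvature one would expect in a graph with a heavy-tailed degree distribution.
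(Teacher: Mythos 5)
Your first step is sound and is in fact the same observation that powers the paper's argument: since $\pi(x)/d(x)=(2|E(G)|)^{-1}$, the weight $w(P_{xy})$ is, up to a uniform constant, the probability that a degree-stationary random walk traces $P_{xy}$. But the paper never splits the geodesic at its midpoint; it realizes a $w$-proportional geodesic as a segment of a single stationary process (a random walk interrupted by edge teleportation with probability $1-c$), reads the midpoint law off the stationary distribution $\propto d(z)$, and removes the residual length bias $c^{k+1}$ by letting $c\to 1$. Your route, by contrast, decomposes the weight as $w(P_{xy})=w(P_{xz})\,d(z)^{-1}\,w(P_{zy})$ and then sums the two halves independently, and this is where the proof breaks in two ways. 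First, the displayed factorization is not an identity: the product on the right counts \emph{all} pairs of shortest $x$--$z$ and $z$--$y$ paths of length $m$, but such a concatenation is a shortest $x$--$y$ path with midpoint $z$ only when $d(x,y)=2m$; in general the left side is strictly smaller, and the deficit depends on $z$. Second, even after correcting for this, you are left with the $z$-dependent factor you call $Q_m(z)^2$ (more precisely, the probability that two independent $m$-step geodesic-tracing walks from $z$ land at mutual distance $2m$), and the theorem claims \emph{exact} proportionality to $d(z)$ for an arbitrary graph --- there is no power-law, locally-tree-like, or regularity hypothesis available to you. Dismissing that factor as ``essentially uncorrelated with $d(z)$'' is a heuristic, not a proof; on a star or a path, for instance, it varies drastically from vertex to vertex.

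So the gap is concrete: the step from $d(z)\cdot Q_m(z)^2$ (with the concatenation constraint folded in) to plain $d(z)$ is exactly the content of the theorem, and your proposal leaves it unproved. To repair it along the paper's lines you would avoid conditioning the two half-walks separately: embed the chosen geodesic as one block of a stationary degree-biased trajectory, so that the vertex occupying the central position of the block has marginal law $\propto d(z)$ by stationarity alone, and handle the bias toward short blocks (your ``appropriate length'' issue, which your reduction also leaves implicit) by the paper's device of weighting a block of length $k+1$ by $c^{k+1}(1-c)$ and taking $c\to 1$. What your计算 does usefully expose is precisely the correction term that the splitting-at-the-midpoint strategy must control; but as written the proposal establishes the theorem only in spirit, not in fact.
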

\begin{proof}
We consider a random process that is a random walk with edge teleportation.
Fix some arbitrary $0 < c < 1$.
We place some token on a vertex of the graph and will move that token according to a random walk with probability $c$ and according to edge teleportation with probability $1-c$.

We start the process with a stationary distribution on the vertices for a random walk; let the probability that our token is on $v$ be proportional to $d(v)$.
Suppose the token is on vertex $z$, and let us discuss how the token will move at the next step in the process.
With probability $c$ we move the token to a random neighbor of $z$.
Note that this step preserves the probability distribution; after a random walk step the probability that the token is on $z$ is still proportional to $d(z)$.
With probability $1-c$ we choose an edge uniformly at random and teleport the token to one of its endpoints chosen uniformly at random.
This too preserves the probability distribution; and so the probability that the token is on $z$ will be proportional to $d(z)$ as we iterate this process.

Let $z_0, z_1, \ldots$ be an infinite sequence of vertices such that the token is on vertex $z_i$ after $i$ iterations of our process.
Let $i_1, i_2, \ldots$ be the iterations such that the transition $z_{i_j-1}\rightarrow z_{i_j}$ involves edge teleportation for each $j$ and $z_{k-1}\rightarrow z_k$ involves a step in a random walk when $k \notin \{i_1, i_2, \ldots \}$.
We choose a random geodesic as follows: repeatedly pick some $j \in \mathbb{Z}$ until the path $P_j = z_{i_j}, z_{i_j+1}, \ldots, z_{i_{j+1}-1}$ is a geodesic.

The theorem will follow when we establish two facts: (1) the probability that the midpoint of $P_j$ is $z$ is proportional to $d(z)$ and (2) the probability that a fixed geodesic $P'$ is chosen is proportional to $w(P')$.
Part (1) is easy, as the probability of any vertex $z_i$ being $z$ is proportional to $d(z)$.
All that remains is (2).

Let $P_{xy} = x,u_1, u_2, \ldots, u_k,y$ be some fixed geodesic, and let us calculate the probability $q(P_{xy})$ that $P_{xy} = P_j$ for the first value of $j$ (in other words, without accounting for the fact that we throw out non-geodesic paths).
The probability that $z_{i_j} = x$ is $\frac{d(x)}{2|E(G)|}$.
The probability that $i_{j+1} = i_j + k+2$ is $c^{k+1}(1-c)$.
For shorthand, let us denote $u_0 = x$ and $u_{k+1} = y$.
Given that $z_{i_j + \ell}  = u_\ell$ and $i_{j+1} = i_j + k+2$, the probability that $z_{i_j + \ell + 1}  = u_{\ell+1}$ is $\frac{1}{d(u_\ell)}$.
Thus 
$$ q(P_{xy}) = \frac{d(x)}{2|E(G)|} c^{k+1}(1-c) \prod_{i=0}^k \frac{1}{d(u_i)} = \frac{1-c}{2|E(G)|} c^{k+1}\prod_{i=1}^k \frac{1}{d(u_i)}.$$

Now let us adjust the calculation of the probability of $P_{xy} = P_j$ to account for the fact that we will throw out non-geodesic paths.
The probability is clearly proportional to $q(P_{xy})$, when taken in comparison to all other paths.
When we consider the proportional values, the term $\frac{1-c}{2|E(G)|}$ cancels as it is a uniform constant.
Therefore the probability that $P_{xy} = P_j$ for the final value of $j$ is proportional to $c^{k+1}\prod_{i=1}^k \frac{1}{d(u_i)} = c^{k+1}w(P_{xy})$.
The theorem follows by considering the limit $c \rightarrow 1$.
\end{proof}

Now suppose we perform the same proof, but this time the probability of an edge teleportation from $z_i$ to $z_{i+1}$ is $\frac{1}{d(z_i)+1}$ rather than $c$.
The following theorem is the result of this modification, plus averaging the probability of path $P_{xy}$ with its reverse.

\begin{theorem}
Suppose we weight a path $P_{xy} = x=u_1, u_2, \ldots, u_k=y$ as $w(P_{xy}) = \left(d(x) + d(y) \right)\prod_{i=1}^k \frac{1}{d(u_i)+1}$.
If we pick a random shortest path $P_{xy}$, where $P_{xy}$ is picked proportional to $w(P_{xy})$ across all pairs $x,y$, then the probability that $z$ is the midpoint of $P_{xy}$ is proportional to $d(z)$.
\end{theorem}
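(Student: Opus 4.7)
The plan is to replay the proof of Theorem \ref{random power law theorem 1} with the two modifications stated in the paragraph just above. I keep the random-walk-with-edge-teleportation process, but replace the constant teleportation probability $c$ by the vertex-dependent probability $\frac{1}{d(z_i)+1}$: at vertex $z$, the token moves to a uniformly random neighbor with probability $\frac{d(z)}{d(z)+1}$, or performs the same edge-teleportation (choose an edge uniformly, then an endpoint uniformly) with probability $\frac{1}{d(z)+1}$. The teleportation distribution on $V(G)$ is again $v\mapsto d(v)/(2|E|)$, and I start the walk with this distribution so that each teleportation-vertex $z_{i_j}$ is marginally distributed proportionally to $d$.

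The path-probability computation is a direct analogue of the previous one. For a fixed directed geodesic $P_{xy}=(x=u_1,\dots,u_k=y)$, starting at $x$ costs $\frac{d(x)}{2|E|}$; each no-teleport random walk step from $u_i$ to the specific neighbor $u_{i+1}$ costs $\frac{d(u_i)}{d(u_i)+1}\cdot\frac{1}{d(u_i)}=\frac{1}{d(u_i)+1}$; and the terminating teleportation out of $y$ costs $\frac{1}{d(y)+1}$. Hence
\[
q(P_{xy}) \;=\; \frac{d(x)}{2|E|}\prod_{i=1}^{k}\frac{1}{d(u_i)+1}.
\]
Since the two endpoints play asymmetric roles in $q$, I now average with the reverse direction, as indicated by the remark preceding the theorem, and obtain
\[
q(P_{xy})+q(P_{yx}) \;=\; \frac{d(x)+d(y)}{2|E|}\prod_{i=1}^{k}\frac{1}{d(u_i)+1} \;=\; \frac{w(P_{xy})}{2|E|}.
\]
Conditioning on $P_j$ being a geodesic, as in the previous theorem, the chosen unordered geodesic is therefore drawn proportionally to $w(P_{xy})$.

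For the midpoint claim, I want to argue that the probability of the midpoint of the chosen geodesic equaling $z$ is proportional to $d(z)$. The constant-$c$ argument does not port directly: with the vertex-dependent teleportation, $|P_j|$ is now correlated with the walk and the stationary distribution of the full chain is not $\propto d(v)$. Instead I exploit the reversibility of the pure no-teleport transition $P_{\text{no-tp}}(u\to v)=\mathbf{1}[uv\in E]/(d(u)+1)$ with respect to $\pi'(v)=d(v)+1$: for an even-length geodesic with midpoint $z$, this reversibility factors the no-teleport walk probability of the run into the product of the walk probabilities of the two half-walks emanating from $z$, and the resulting $\frac{1}{d(z)+1}$ at the midpoint combines with the $(d(x)+d(y))$ symmetrization and the outer factor $d(z)$ coming from summing over the two halves to give a total weight of the form $d(z)\cdot(\text{quantity independent of }z)$.

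The main obstacle is precisely this midpoint step. In the previous theorem it was a one-line consequence of the chain being in its stationary distribution; here, one must combine (i) the $d(x)+d(y)$ symmetrization, (ii) reversibility of the no-teleport walk with respect to $d(v)+1$, and (iii) the sum over compatible pairs of half-walks from $z$ to extract the factor of $d(z)$. I expect the algebra to be delicate but mechanical once the reversibility identity is applied in the right place.
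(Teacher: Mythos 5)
Your path-weight calculation is correct and is exactly what the paper has in mind: with teleportation probability $\frac{1}{d(z)+1}$ at $z$, the probability that a run equals the directed geodesic $P_{xy}=(x=u_1,\dots,u_k=y)$ is $\frac{d(x)}{2|E|}\prod_{i=1}^k\frac{1}{d(u_i)+1}$, and averaging with the reverse recovers $w(P_{xy})/(2|E|)$. You have also correctly located the genuine gap, which the paper's two-sentence remark does not address at all: the midpoint step in the previous theorem rested entirely on the chain being started at, and preserving, the degree-proportional measure, and with a vertex-dependent teleportation rate that invariance fails (one can check, for instance, on the star $K_{1,n}$ that the stationary measure of the modified chain is no longer proportional to degree).

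Your proposed fix via reversibility of the no-teleport sub-kernel with respect to $d(v)+1$ is the natural thing to try, but carrying it out does not produce the claimed factor. Splitting a geodesic at its midpoint $z$ and applying $\pi'(u)\,T(u\to z)=\pi'(z)\,T(z\to u)$ to each half gives
\[
w(P_{xy})=\frac{(d(x)+d(y))\,(d(z)+1)}{(d(x)+1)(d(y)+1)}\;T(z\to x)\,T(z\to y),
\]
so the factor that separates at $z$ is $d(z)+1$, not $d(z)$, and a residual endpoint-dependent weight remains inside the sum over compatible pairs of half-walks. The last step is therefore not merely ``delicate but mechanical.'' As it stands, neither your outline nor the paper's remark establishes the midpoint claim; a rigorous version will likely require a different normalization of the weight, additional hypotheses, or a sharper formulation of the statement.
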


\appendix

\section{Statements that were not used}
There does not seem to be a significant difference between weak displacement convexity and Brunn-Minkowski curvature.
We make this statement based on the fact that Lemmas \ref{separate into distances} and \ref{uniform across a midpoint} hold for any graph.
In our work towards Theorem \ref{weak curvature for hypercube}, we proved several lemmas that were not part of the final version of the proof.
However, the statements are interesting in their own right, and may be useful towards establishing a bound on curvature for other spaces.

The following claim is from when we were still working with $\widehat{m}$ instead of $\widetilde{m}$.

\begin{claim}
For probability distributions $\mu_A, \mu_B$, let $\tau$ be an optimal transportation.
Suppose $\tau(\alpha_1, \beta_1) \tau(\alpha_2, \beta_2) > 0$ and $\widehat{m}(\alpha_1, \beta_1) \cap \widehat{m}(\alpha_2, \beta_2) \neq \emptyset$.
Under these circumstances, $| d(\alpha_1, \beta_1) - d(\alpha_2, \beta_2) | \leq 2$.
\end{claim}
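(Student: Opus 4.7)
The plan is to extend the technique of Lemma \ref{separate into distances} from $\widetilde{m}$ to $\widehat{m}$, exploiting cyclical monotonicity of optimal transports. The extra slack in $\widehat{m}$ --- each shared midpoint need only sit at \emph{one} of the two rounded half-distances from each endpoint, not at a unique "midpoint edge" --- is precisely what loosens the conclusion from $r_1 = r_2$ to $|r_1 - r_2| \leq 2$.

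Without loss of generality set $r_1 := d(\alpha_1, \beta_1) \leq d(\alpha_2, \beta_2) =: r_2$ and write $r = r_2 - r_1$; we want to show $r \leq 2$. Pick $u$ in the intersection and set $a = d(\alpha_1, u)$, $c = d(\alpha_2, u)$. By the definition of $\widehat{m}$, one has $a \in \{\lfloor r_1/2\rfloor, \lceil r_1/2\rceil\}$, $c \in \{\lfloor r_2/2\rfloor, \lceil r_2/2\rceil\}$, and $d(u, \beta_i) = r_i - d(\alpha_i, u)$. The triangle inequality through $u$ then gives
\[
d(\alpha_1, \beta_2) \leq a + (r_2 - c), \qquad d(\alpha_2, \beta_1) \leq c + (r_1 - a).
\]

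Optimality of $\tau$ is equivalent to cyclical monotonicity on its support, so $r_1^2 + r_2^2 \leq d(\alpha_1, \beta_2)^2 + d(\alpha_2, \beta_1)^2$. Writing $A = a+r_2-c$, $B = c+r_1-a$, we have $A + B = r_1 + r_2$ and $A - B = 2(a-c) + r$, so $A^2 + B^2 = \tfrac{1}{2}((A+B)^2 + (A-B)^2)$. Substituting and comparing with $r_1^2 + r_2^2 = \tfrac{1}{2}((r_1+r_2)^2 + r^2)$ reduces the inequality to $2(c-a)(c-a-r) \leq 0$, i.e.\ either $c \leq a$ or $c \geq a + r$.

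Finally, assume for contradiction that $r \geq 3$. On the one hand, $\lfloor r_2/2\rfloor \geq \lceil r_1/2\rceil + 1$ in either parity of $r_1$ (e.g.\ $r_1 = 2k$ gives $\lfloor (2k+3)/2\rfloor = k+1 = \lceil r_1/2\rceil + 1$, and similarly for $r_1$ odd), so $c \geq a + 1$, ruling out $c \leq a$. On the other hand, $c - a \leq \lceil r_2/2\rceil - \lfloor r_1/2\rfloor$, and a short four-case parity check (on $r_1 \bmod 2$ and $r \bmod 2$) shows this upper bound is at most $r - 1$ whenever $r \geq 3$; the tightest case is $r_1$ odd and $r$ odd, giving $\lfloor r/2\rfloor + 1 \leq r - 1$. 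Thus $0 < c - a < r$, contradicting the dichotomy. The main obstacle is the last parity check --- purely routine, but it must be done carefully to rule out all four combinations.
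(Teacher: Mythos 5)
Your proof is correct and takes essentially the same route as the paper, which simply remarks that the contradiction argument of Lemma \ref{separate into distances} (triangle inequality through the shared approximate midpoint plus two-point cyclical monotonicity) still applies whenever $\lceil d(\alpha_1,\beta_1)/2\rceil < \lfloor d(\alpha_2,\beta_2)/2\rfloor$ --- a condition that is exactly equivalent to the parity bounds you verify, and which holds for all gaps $r \geq 3$. One small sign slip: the reduction of the monotonicity inequality should read $2(c-a)(c-a-r) \geq 0$, which is what your stated dichotomy ($c \leq a$ or $c \geq a+r$) and the final contradiction $0 < c-a < r$ actually use.
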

\begin{proof}
The proof by contradiction of Lemma \ref{separate into distances} will hold when $\lceil d(\alpha_1, \beta_1)/2 \rceil < \lfloor d(\alpha_2, \beta_2)/2 \rfloor$.
\end{proof}

The goal of the following lemma was to prove an analogue of Hall's Marriage theorem from the set of $(a,b)$ such that $\tau(a,b) > 0$ to the set of midpoints.
The outline was to prove that any collection $\{(a_1,b_1), \ldots, (a_k,b_k)\}$ had large sets $\{a_1, \ldots, a_k\}$ and $\{b_1, \ldots, b_k\}$, and therefore many midpoints by the Brunn-Minkowski inequality.

\begin{claim} \label{a forest of transportation}
For any graph $G$ and probability measures $\mu_A, \mu_B$, there exists an optimal transportation $\tau$ from $\mu_A$ to $\mu_B$ such that for all $A', B'$, we have that 
$$ |\{(a,b): a \in A', b \in B', \tau(a,b) > 0\} | \leq 2(|A'| + |B'|) - 1 . $$
\end{claim}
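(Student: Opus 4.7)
The plan is to exhibit an optimal transportation $\tau$ whose bipartite support graph $H$ (with parts $A = \mathrm{supp}(\mu_A)$, $B = \mathrm{supp}(\mu_B)$, and edge $ab$ whenever $\tau(a,b) > 0$) is a forest; this will yield the stronger bound $|A'|+|B'|-1$ on induced subgraphs, which implies the claim. To produce such $\tau$, I would formulate optimal transportation as the linear program of minimizing $\sum_{a,b}\tau(a,b)d(a,b)^2$ over $\tau \geq 0$ subject to the marginal equalities $\sum_b \tau(a,b) = \mu_A(a)$ and $\sum_a \tau(a,b) = \mu_B(b)$. The feasible region is the compact transportation polytope, so the set of optimal solutions is a nonempty face. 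Every nonempty face of a polytope contains a vertex of the polytope, so I fix an optimal $\tau^*$ that is a vertex.

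Next I would verify that the support graph $H$ of $\tau^*$ is acyclic by the standard LP vertex argument. If $H$ contained a cycle $a_1 b_1 a_2 b_2 \cdots a_k b_k a_1$, define a signed perturbation $\sigma$ by $\sigma(a_i, b_i) = +1$ and $\sigma(a_{i+1},b_i) = -1$ for $i = 1, \ldots, k$ (indices mod $k$), and zero elsewhere. Every row sum and every column sum of $\sigma$ vanishes, so $\tau^* + \epsilon\sigma$ satisfies the marginal constraints for all $\epsilon \in \mathbb{R}$. Because $\tau^*$ is strictly positive on each edge of the cycle, both $\tau^* + \epsilon_0\sigma$ and $\tau^* - \epsilon_0\sigma$ remain in the polytope for some $\epsilon_0 > 0$, exhibiting $\tau^* = \tfrac12[(\tau^*+\epsilon_0\sigma) + (\tau^*-\epsilon_0\sigma)]$ as the midpoint of two distinct feasible points and contradicting the vertex property. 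Hence $H$ is a forest on $A \cup B$.

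Finally, for any $A' \subseteq A$ and $B' \subseteq B$, the subgraph of $H$ induced on $A' \cup B'$ is a subgraph of a forest, so it has at most $|A'|+|B'|-1 \leq 2(|A'|+|B'|)-1$ edges, proving the claim. There is no real obstacle: this is the classical acyclicity of vertices of the transportation polytope applied to a vertex of the optimal face. The factor of $2$ slack in the stated bound suggests the claim was phrased generously rather than tightly; the argument above actually gives the sharper bound $|A'|+|B'|-1$, which is what one would want for the Hall-type application the surrounding commentary envisioned.
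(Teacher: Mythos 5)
Your argument is correct and reaches the conclusion by a genuinely different route. The paper selects, among all optimal transportations, one that minimizes the number of edges in its bipartite support graph $J_\tau$, and then uses cyclic monotonicity (equality around a cycle of positive-mass pairs) to show that a cycle would allow removing an edge while preserving optimality; you instead observe that the optimal face of the transportation polytope must contain a vertex, and invoke the classical fact that vertices of a transportation polytope have acyclic support. Both give a forest, and both must formally ``double up'' a vertex $w$ that lies in the support of both $\mu_A$ and $\mu_B$ so that the bipartition is well defined (the paper does this explicitly with $v_w$ and $u_w$; in your LP formulation it is automatic because the variables are indexed by formal pairs). Your final count is also tighter: you restrict to the vertex set $\{v_a : a \in A'\} \cup \{u_b : b \in B'\}$, which has exactly $|A'| + |B'|$ elements, giving the bound $|A'| + |B'| - 1$. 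The paper instead passes through $J[A' \cup B']$, the subgraph induced on all vertices $v_w$ and $u_w$ with $w \in A' \cup B'$, which can have up to $2|A' \cup B'| \leq 2(|A'|+|B'|)$ vertices, hence the stated factor of $2$. So the $2(|A'|+|B'|)-1$ in the claim is indeed slack in the way you suspected.
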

\begin{proof}
For a fixed transportation $\rho$, we define a bipartite graph $J_\rho$ with disjoint vertex sets $S_A \cup S_B$, where $S_A = \{v_a : \mu_A(a) > 0\}$ and $S_B = \{u_b : \mu_B(b) > 0\}$.
If $\mu_A(w) > 0$ and $\mu_B(w) > 0$ for some vertex $w$, then $v_w$ and $u_w$ both exist and are different.
We define the edges to be $E(J_\rho) = \{v_au_b : \rho(a, b) > 0\}$.
Among all optimal transportations, let $\tau$ be the one that minimizes $|E(J_\tau)|$.

We claim that $J_\tau$ is a forest.
By way of contradiction, suppose that a cycle $C = u_1,v_1,u_2,v_2,\ldots,u_k,v_k$ is in $J_\tau$.
Let $\epsilon_1 = \min_i \rho(u_i,v_i)$ and let $\epsilon_2 = \min_i \rho(u_{i+1}, v_i)$, where the indices are taken modulo $k$.
We consider two transportations $\tau', \tau''$, that equal $\tau$ on all pairs over vertices except
\begin{itemize}
	\item $\tau'(u_i, v_i) = \tau'(u_i, v_i) + \epsilon_2$, 
	\item $\tau'(u_{i+1}, v_i) = \tau'(u_{i+1}, v_i) - \epsilon_2$, 
	\item $\tau''(u_i, v_i) = \tau'(u_i, v_i) - \epsilon_1$,  and
	\item $\tau''(u_{i+1}, v_i) = \tau'(u_{i+1}, v_i) + \epsilon_1$. 
\end{itemize}
By cyclic monotonicity, we have that $\tau', \tau''$ are each optimal.
This is a contradiction, because by construction, $|E(J_{\tau'})|,|E(J_{\tau''})| < |E(J_\tau)|$.

Now consider vertex sets $A', B'$ and edge set $E' = \{(a,b): a \in A', b \in B', \tau(a,b) > 0\}$.
Each vertex in $A' \cup B'$ induces at most two vertices in $J_\tau$; let us call this induced subgraph $J[A' \cup B']$.
Each edge in $E'$ is in $J[A' \cup B']$.
Because $J_\tau$ is a forest, there are at most $|V(J[A' \cup B'])|-1$ such edges.
\end{proof}

The problem with Claim \ref{a forest of transportation} is that the Brunn-Minkowski inequality would collect many midpoints between $(a_i,b_j)$ where $\tau(a_i, b_j) = 0$.
Moreover, this seems like a fundamental flaw, given Example \ref{negative curvature}.
If anything, this is the complete opposite of the transportation that we did use.
By a proof similar to Theorem \ref{everybody is large}(2), it can be shown that if $\tau$ is optimal, maximizes entropy, and has no partition, then $\tau(a,b) > 0$ for all pairs with $d(a,b) = D$, $\mu_A(a) > 0$, $\mu_B(b) > 0$.
Regardless, this claim is an interesting statement in its own right.

The following statement turned out to be unnecessary for proving Theorem \ref{char strong curve}.
But it is true for general finite discrete spaces, and so it may be applicable towards a more general statement.

\begin{claim} \label{curvature over sets}
If $G$ has strong displacement convexity for probability measures $\mu_A, \mu_B$ that are uniform over vertex sets $A, B$, respectively, then $G$ has strong displacement convexity.
\end{claim}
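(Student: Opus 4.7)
The plan is to reduce the general case to the uniform-over-sets hypothesis via a density and convex-decomposition argument, and then combine the per-pair inequalities by concavity of entropy. First, by continuity of the entropy functional and the Wasserstein cost, together with upper hemicontinuity of the set of optimal transportations on the compact probability simplex of $V(G)$, it suffices to establish the inequality for $\mu_A,\mu_B$ with rational weights.

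Next, any rational probability measure on $V(G)$ decomposes as a convex combination $\mu_A=\sum_i\lambda_i\mu_{A_i}$ of uniform-over-set measures: sorting the weights $p_1\geq\cdots\geq p_k$ of $\mu_A$ and peeling off the layer $kp_k\cdot\mu_{\{a_1,\ldots,a_k\}}$ iteratively produces such a decomposition. Fix an optimal transport $\tau$ and geodesic distributions $Q$ for $(\mu_A,\mu_B)$. By Lemma \ref{induct via partition} we may assume $\tau$ has no partition, and then Lemma \ref{everybody is large} yields a common distance $D$ on the support of $\tau$ and, crucially, $d(a,b)\geq D$ for every pair $(a,b)\in\operatorname{supp}(\mu_A)\times\operatorname{supp}(\mu_B)$. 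This second fact means that for any decomposition $\tau=\sum_{i,j}\lambda_i\kappa_j\tau_{ij}$ into sub-transports from $\mu_{A_i}$ to $\mu_{B_j}$ (existence from a bipartite-flow construction on $\operatorname{supp}(\tau)$), each $\tau_{ij}$ is automatically optimal with $W^2(\tau_{ij})=D^2$. Using the original geodesic choices $Q$ on each pair yields midpoint measures $\mu_t^{(i,j)}$ satisfying $\mu_t=\sum_{i,j}\lambda_i\kappa_j\mu_t^{(i,j)}$.

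Applying the hypothesized SDC inequality to each $(\mu_{A_i},\mu_{B_j})$ and combining via concavity of entropy produces $H(\mu_t)\geq t\sum_i\lambda_i H(\mu_{A_i})+(1-t)\sum_j\kappa_j H(\mu_{B_j})+\tfrac{K}{2}t(1-t)D^2$.

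The main obstacle is that concavity of entropy goes the wrong way to finish: one has $\sum_i\lambda_i H(\mu_{A_i})\leq H(\mu_A)$, while the target inequality needs the reverse. Closing the argument requires the chain-rule identity $H(\mu_A)=\sum_i\lambda_i H(\mu_{A_i})+H(\lambda)-H(\lambda\mid X)$ for $X\sim\mu_A$, together with the analogous identity at the midpoint level. The strategy is to arrange the decomposition so that the midpoint measures $\mu_t^{(i,j)}$ have essentially disjoint supports, forcing the mixing-entropy term to reappear on the left side and cancel the excess on the right. I expect this disjointness to follow from refining the peeling construction using the distance-$D$ rigidity of Lemma \ref{everybody is large}, and this is the principal technical step.
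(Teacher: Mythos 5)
Your proposal leaves the central difficulty unresolved, and you honestly flag it yourself: after decomposing $\mu_A=\sum_i\lambda_i\mu_{A_i}$ and applying the hypothesis to each pair, concavity of entropy only gives $\sum_i\lambda_i H(\mu_{A_i})\le H(\mu_A)$, which is the wrong direction for your target. The proposed fix---arranging the decomposition so the midpoint measures $\mu_t^{(i,j)}$ have essentially disjoint supports, so that the mixture terms $H(\lambda),H(\kappa)$ reappear on the left and cancel the defect---does not work with the peeling decomposition you describe. Peeling by layers produces \emph{nested} supports $\{a_1,\ldots,a_k\}\supset\{a_1,\ldots,a_{k-1}\}\supset\cdots$, and symmetrically for $\mu_B$; since you are also reusing the same geodesic choices $Q_{\alpha,\beta}$ on overlapping index pairs, the midpoint measures $\mu_t^{(i,j)}$ will overlap heavily rather than be disjoint. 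The rigidity $d(a,b)=D$ from Lemma \ref{everybody is large} constrains distances, not support overlaps, and offers no mechanism to separate the midpoints of nested sub-transports. There is also a smaller unaddressed point: the product-weight decomposition $\tau=\sum_{i,j}\lambda_i\kappa_j\tau_{ij}$ with each $\tau_{ij}$ a coupling of $\mu_{A_i}$ and $\mu_{B_j}$ need not exist; the bipartite-flow construction you cite only guarantees some weights $c_{ij}$ with the right marginals on indices, not $c_{ij}=\lambda_i\kappa_j$, and without the product form the mixture-of-entropies bookkeeping becomes even harder.

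The paper's argument avoids the decomposition altogether and therefore never runs into this wall. Instead of splitting $\mu_A$, it \emph{modifies} it: whenever two pairs $(\alpha_1,\beta_1),(\alpha_2,\beta_2)$ in the support of $\tau$ have geodesics sharing a midpoint and $\mu_A(\alpha_1)>\mu_A(\alpha_2)$, one moves $\epsilon$ of mass from $\alpha_1$ to $\alpha_2$, adjusting $\tau$ and $Q$ so that the midpoint distribution $\mu_C$ is literally unchanged. An exchange argument (the same cycle-swapping used in Claim \ref{a forest of transportation}) shows the adjusted transport remains optimal, and strict concavity gives $S(\mu_A')>S(\mu_A)$ with $\mu_C$, $W^2$ fixed, so it suffices to prove the inequality for the flattened $\mu_A'$. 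Iterating until weights are constant on each midpoint-sharing class and then restricting to those classes (where the induced measures really are uniform, and whose midpoints genuinely do not overlap by construction) finishes the reduction. In short: the paper increases $H(\mu_A)$ holding the midpoint side fixed, whereas your route tries to recover $H(\mu_A)$ from the pieces, and that recovery is exactly the missing step.
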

\begin{proof}
Let $\mu_A, \mu_B$ be arbitrary probability distributions with optimal transport $\tau$ with geodesic choices $Q_{\alpha, \beta}$ which produces midpoint distribution $\mu_C$.
Suppose there exists some vertex $m$ such that $m = P_1(1/2) = P_2(1/2)$ for $Q_{\alpha_1, \beta_1}(P_1) > 0$, $Q_{\alpha_2, \beta_2}(P_2) > 0$, and $\mu_A(\alpha_1) > \mu_A(\alpha_2)$ (we allow the possibility of $\beta_1 = \beta_2$).
Let $\epsilon = \min\{\mu_A(\alpha_2), (\mu_A(\alpha_1) - \mu_A(\alpha_2))/2\} >0 $, and define $\mu_A'(\alpha_1) = \mu_A(\alpha+1) - \epsilon$, $\mu_A'(\alpha_2) = \mu_A(\alpha_2) + \epsilon$, and $\mu_A' = \mu_A$ otherwise.
By construction, there exists a transportation $\tau'$ from $\mu_A'$ to $\mu_B$ and geodesic choices $Q'_{\alpha, \beta}$ such that the midpoints have distribution $\mu_C$.
We propose that this is an optimal transposition from $\mu_A'$ to $\mu_B$.

Before proving the proposition, let us show how the proposition implies the claim.
By convexity we know that $S(\mu_A') > S(\mu_A)$, so if $\mu_A'$ to $\mu_B$ has curvature, then so does $\mu_A$ to $\mu_B$.
Repeat this process until $\mu_A(\alpha_1) = \mu_A(\alpha_2)$ whenever the geodesics of $\tau$ and $Q_{\alpha, \beta}$ involving $\alpha_1$ and $\alpha_2$ share a midpoint.
We define equivalency class $A_r = \{\alpha: \mu_A(\alpha) = r\}$ for $r>0$, and let $M_{A,r} = \sum_{\alpha \in A_r} \mu_A (\alpha)$ and $\mu_{A,r}$ be the probability distribution $\mu_A$ projected onto the set $A^r$ and scaled by $M_{A,r}^{-1}$.
If each transportation $\mu_{A,r} \otimes \mu_B$ using $\tau_{A,r}$ and $Q_{\alpha, \beta}$ has curvature, then the whole $\mu_A \otimes \mu_B$ will have curvature as the midpoints do not overlap.
But $\mu_{A,r}$ is a uniform measure over $A_r$, which is the conclusion of the claim.
Symmetrically we do this to $\mu_B$ as well, and so the claim holds.

So now we prove the proposition.
By Lemma \ref{separate into distances} $d(\alpha_1, \beta_1) = d(\alpha_2, \beta_2)$, and so $W^2(\tau') = W^2(\tau) = W^2(\mu_A, \mu_B)$.
Suppose there exists a transportation $\tau''$ from $\mu_A'$ to $\mu_B$ such that $W^2(\tau'') < W^2(\tau)$.
Among all such transportations, let $\tau''$ minimize $|\{(a,b): \tau''(a,b) \neq \tau'(a,b)\}|$.
If this number is $0$, then $W^2(\tau'') = W^2(\tau')$, which is a contradiction.

We consider the bipartite graph $J_{\tau'}$ and $J_{\tau''}$ as constructed in Claim \ref{a forest of transportation}.
There must exist some $u_1, v_1$ where $\tau'(u_1, v_1) < \tau''(u_1, v_1)$.
But both $\tau'$ and $\tau''$ move mass into the same distribution $\mu_B$, so there must exist a $u_2$ such that $\tau'(u_2, v_1) > \tau''(u_2, v_1)$.
We continue this process until we have found a cycle $u_1, v_2, \ldots, u_k, v_k$ such that for each $i$, $\tau'(u_i, v_i) < \tau''(u_i, v_i)$ and $\tau'(u_{i+1}, v_i) > \tau''(u_{i+1}, v_i)$.
We use this cycle to update $\tau'$ to be $\tau'_*$ or $\tau''$ to be $\tau''_*$ as in Claim \ref{a forest of transportation}, and one of two things happen: (1) $|\{(a,b): \tau''_*(a,b) \neq \tau'(a,b)\}|$ decreases and $W^2(\tau'') = W^2(\tau''_*)$, which is a contradiction, or (2) $W^2(\tau'_*) < W^2(\tau')$.
But notice that this update can also be done to $\tau$ into transportation $\tau_*$ with $W^2(\tau_*) < W^2(\tau)$, which contradicts the minimality of $\tau$.
\end{proof}

\end{document}